\definecolor{darkred}{RGB}{100,0,0}
\definecolor{darkgreen}{RGB}{0,100,0}
\definecolor{darkblue}{RGB}{0,0,150}
\def\env{V}
\def\risk{{\sf risk}}
\def\loss{{\sf L}}
\def\d{{\rm d}}
\newtheorem{theorem}{Theorem}[section]
\newtheorem{lem}[theorem]{Lemma}
\newtheorem{cor}[theorem]{Corollary}
\theoremstyle{remark}
\newtheorem{rem}[theorem]{Remark}
\newtheorem{asp}{Assumption}
\newtheorem{exa}[theorem]{Example}
\def\beq{\begin{equation}} 
\def\eeq{\end{equation}}
\def\beqn{\begin{eqnarray*}}
\def\eeqn{\end{eqnarray*}}
\def\Bitem{\begin{itemize}\setlength{\itemsep}{.2in}}
\def\bitem{\begin{itemize}\setlength{\itemsep}{.05in}}
\def\eitem{\end{itemize}}
\def\Benum{\begin{enumerate}\setlength{\itemsep}{.2in}}
\def\benum{\begin{enumerate}\setlength{\itemsep}{.05in}}
\def\eenum{\end{enumerate}}
\def\bmult{\begin{multline*}}
\def\emult{\end{multline*}}
\def\bcenter{\begin{center}}
\def\ecenter{\end{center}}
\def\bframe{\begin{frame}}
\def\eframe{\end{frame}}
\newcommand{\thmref}[1]{Theorem~\ref{thm:#1}}
\newcommand{\lemref}[1]{Lemma~\ref{lem:#1}}
\newcommand{\secref}[1]{Section~\ref{sec:#1}}
\newcommand{\figref}[1]{Figure~\ref{fig:#1}}
\newcommand{\aspref}[1]{Asumption~\ref{asp:#1}}
\newcommand{\remref}[1]{Remark~\ref{rem:#1}}
\DeclareMathOperator*{\argmax}{arg\, max}
\DeclareMathOperator*{\argmin}{arg\, min}
\DeclareMathOperator{\dist}{dist}
\def\cD{\mathcal{D}}
\def\cM{\mathcal{M}}
\def\cW{\mathcal{W}}
\def\bbI{\mathbb{I}}
\def\bbR{\mathbb{R}}
\newcommand{\E}{\operatorname{\mathbb{E}}}
\renewcommand{\P}{\operatorname{\mathbb{P}}}
\newcommand\indep{\protect\mathpalette{\protect\independenT}{\perp}}
\def\independenT#1#2{\mathrel{\rlap{$#1#2$}\mkern2mu{#1#2}}}
\def\eps{\varepsilon}
\DeclareMathOperator{\sign}{sign}
\def\1{\mathbbm{1}}
\newcommand{\IND}[1]{\bbI{\{ #1 \}}}
\definecolor{purple}{rgb}{0.4,.1,.9}
\newcommand\blfootnote[1]{%
  \begingroup
  \renewcommand\thefootnote{}\footnote{#1}%
  \addtocounter{footnote}{-1}%
  \endgroup
}
\begin{document}
\thispagestyle{empty}

\title{Template Matching and Change Point Detection by M-estimation}
\author{Ery Arias-Castro \and Lin Zheng}
\date{}
\maketitle

\blfootnote{The authors are with the Department of Mathematics, University of California, San Diego, USA.  Contact information is available \href{http://math.ucsd.edu/\~eariasca}{here} and \href{http://www.math.ucsd.edu/people/graduate-students/}{here}.}

\begin{abstract}
We consider the fundamental problem of matching a template to a signal. We do so by M-estimation, which encompasses procedures that are robust to gross errors (i.e., outliers). Using standard results from empirical process theory, we derive the convergence rate and the asymptotic distribution of the M-estimator under relatively mild assumptions. We also discuss the optimality of the estimator, both in finite samples in the minimax sense and in the large-sample limit in terms of local minimaxity and relative efficiency. Although most of the paper is dedicated to the study of the basic shift model in the context of a random design, we consider many extensions towards the end of the paper, including more flexible templates, fixed designs, the agnostic setting, and more. 
\end{abstract}


\section{Introduction} 
\label{sec:intro}

A basic task in signal processing is the matching of a template, aka filter or pattern, to a noisy signal \citep{brunelli2009template, turin1960introduction}. There has been an extensive amount of research is this very broad area, as applications are many, from locating blood vessels and tumors in medical imaging to more sophisticated tasks such as locating a human face or `recognizing' objects like cars in images \citep{hjelmaas2001face, belongie2002shape, lowe1999object, serre2005object}.

\paragraph{Scan statistic}
In statistics, this template matching problem has been considered in different variants. Such is the literature on the scan statistic \citep{glaz2012scan, glaz2009scan, glaz2001scan}, where the focus has been on the detection of the presence of the filter somewhere in the noisy signal, rather than the estimation of its location (when present), for which theory has been developed, including first-order performance bounds \citep{MR2604703,morel,arias2005near,cluster, arias2018distribution} with a minimax decision theory perspective, as well as more refined results studying and even establishing the limit distribution \citep{naus2004multiple, pozdnyakov2005martingale, glaz2004multiple, wang2014variable, haiman2006estimation, proksch2018multiscale, konig2020multidimensional, sharpnack2016exact}. Some of this has some nontrivial intersections with the study of the maximum of various types of random walks and similar processes \citep{shao1995conjecture, jiang, boutsikas, sieg95, kabluchko2011extremes}.
In this literature, there are comparatively very few papers that tackle the problem of estimating the location of the template: \cite{jeng2010optimal} establish a consistency result for the location of very short intervals, while \cite{kou2017identifying} shows that the scan statistic is, as a location estimator, consistent near the signal-to-noise ratio required for mere detection.


\paragraph{Change point detection}
Discontinuities are features of great importance in practice, and it is no coincidence that the template that is most often considered in the scan statistic literature is the indicator of an interval --- or multiple intervals as in \citep{jeng2010optimal, hall2010innovated, frick2014multiscale} --- or a rectangle or other shape as in \citep{cluster, MR2604703}. A discontinuity is sometimes called a change point, and there is also a large body of work in that area. While some of the work on change point detection happens under the umbrella of sequential analysis where data are streaming in \citep{siegmund2013sequential}, what we are discussing here is most closely related to the `offline' or `a posteriori' setting where all data are readily available \citep{truong2020selective}. This spans, in itself, a very large literature \citep{chen2011parametric, csorgo1997limit, brodsky2013nonparametric, basseville1993detection}. First-order performance bounds are available, for example in \citep{frick2014multiscale, he2010asymptotic}, but distributional limits are more scarce, at least when it comes to the location of the discontinuities. \cite{hinkley1970inference} derives the asymptotic distribution of the maximum likelihood ratio when everything else about the model is known. \cite{yao1989least} obtain the asymptotic distribution of the least squares estimator of a piecewise constant signal, and this is extended to other U-type statistics in \citep{doring2011convergence, mauer2018least, ferger2001exponential, bai1997estimation}. 
\citep{ferger1994change, dumbgen1991asymptotic} consider estimating the location of a change of distribution in a sequence of random variables.
We note that all this work is done in the context of a deterministic design corresponding to a regular grid (as in a signal processing setting).

\paragraph{Alignment}
The problem of matching a (clean/noiseless) template to a signal is closely related to the problem of matching two or more noisy signals, sometimes referred to as aligning or registering or synchronizing the signals. While this literature is also very large on the side of methodology and applications \citep{zitova2003image, hajnal2001medical, sotiras2013deformable}, there is a sizable literature that develops theory for such problems. Some of these theoretical developments were made in the context of functional data, which is where in statistics such problems are most prominent, and where the problem is sometimes called `self-modeling' or `shape invariant modeling' \citep{lawton1972self}. In this context, consistency results and/or rates of convergence are obtained in \citep{kneip1988convergence, kneip1992statistical, wang1999synchronizing}, while distributional limits are derived in \citep{hardle1990semiparametric, gamboa2007semi, bigot2009estimation, trigano2011semiparametric, kneip1995model}, and also in \citep{vimond2010efficient}, where questions of efficiency are considered in a semi-parametric model were only smoothness assumptions are made on the common `shape'. 
\cite{collier2012minimax, collier2015curve} consider a hypothesis testing problem in that setting.
We note that the signals are typically smoothed before the alignment is carried out, and that the Fourier transform plays an important role, and the noise is often assumed to be Gaussian or to have sub-Gaussian tails. This is in contrast to the present setting in which no smoothing is used and the Fourier transform plays no role, and we work with minimal assumptions on the noise distribution.
We mention a more recent line of work on this problem which focuses on more complex and noisy settings arising in applications such as cryo-electron microscopy \citep{perry2019sample, wang2013exact, perry2018message}.

\paragraph{Contribution}
In the present paper we study a standard mathematical model for matching a template to a noisy signal by M-estimation. While the most popular method may still be based on maximizing the (Pearson) correlation, the estimators we study can be made much more robust to heavy-tailed noise or the presence of outlying observations. We draw on standard empirical process theory and decision theory, as expounded in \citep{van2000asymptotic}, to derive limit distributions and minimax convergence rates in a wide array of situations.

%


\subsection{Model}
\label{sec:model}

We assume a regression model with additive error
\beq\label{model}
Y_i = f(X_i - \theta^*) + Z_i, \quad i = 1, \dots, n,
\eeq
where $f:\bbR \to \bbR$ is a known function referred to as the {\em template}, and $\theta^* \in \bbR$ is the unknown {\em shift} of interest.
The design points $X_1, \dots, X_n$ are assumed to be iid with density $\lambda$. 
The noise or measurement error variables $Z_1, \dots, Z_n$ are assumed iid with density $\phi$, and independent of the design points.
Note that in this model $(X_1, Y_1), \dots, (X_n, Y_n)$ independent and identically distributed.

\begin{asp}
\label{asp:template}
We assume everywhere that $f$ is compactly supported and, for convenience, c\`adlag with none or finitely many discontinuities. In particular, $f$ is bounded.
And to make sure the shift parameter is identifiable, we also assume that, for any $\theta \ne \theta^*$, $f(\cdot-\theta) \ne f(\cdot-\theta^*)$ on a set of positive measure under $\lambda$, or equivalently, $\int (f(x-\theta^*) - f(x-\theta))^2 \lambda(x) \d x > 0$ whenever $\theta \ne \theta^*$.
\end{asp}

\begin{asp}
\label{asp:design}
We assume everywhere that $\lambda$ is compactly supported. 
\end{asp}

\begin{asp}
\label{asp:noise}
We assume everywhere that $\phi$ is even, so that the noise is symmetric about 0.
\end{asp}

The assumption that the template and the design density both have compact support is for convenience --- although it already applies in most of the settings encountered in practice. It allows us to effectively restrict the parameter space to a compact interval of the real line. Indeed, take $A$ large enough that $f$ and $\lambda$ are both supported on $[-A,A]$. Then the model \eqref{model} is effectively parameterized by $\theta \in [-2A,2A]$ as the model is the same, namely $Y_i = Z_i$, whenever $\theta$ is outside that interval.
The assumption that the noise is symmetric is not needed everywhere, but already covers interesting situations.

\subsection{Goal and methods}
Our goal is, in signal processing terminology, to match the template $f$ to the signal $Y$, which in statistical terms consists in the estimation of the shift $\theta^*$. 
We consider an {\em M-estimator} defined implicitly as the solution to the following optimization problem
\begin{equation}
\label{loss}
\hat\theta := \argmin_{\theta} \sum_{i=1}^n \loss(Y_i - f(X_i - \theta)),
\end{equation}
where $\loss$ is a loss function chosen by the analyst.
A popular choice is the squared error loss, $\loss(y) = y^2$, which defines the least squares estimator
\begin{equation}
\label{ls}
\hat\theta := \argmin_{\theta} \sum_{i=1}^n (Y_i - f(X_i - \theta))^2.
\end{equation}
This is the maximum likelihood estimator when the noise distribution is Gaussian.
Another popular choice is the absolute-value loss, $\loss(y) = |y|$, which defines the least absolute-value estimator
\begin{equation}
\label{la}
\hat\theta := \argmin_{\theta} \sum_{i=1}^n |Y_i - f(X_i - \theta)|.
\end{equation}
This is the maximum likelihood estimator when the noise distribution is Laplace.
Other popular losses include the Huber loss and the Tukey loss.
The Huber loss is of the form 
\begin{equation}
\label{Huber}
\loss(y) = \begin{cases}
\tfrac12 y^2 & \text{if } |y| \le c, \\
c |y| - \tfrac12 c^2 & \text{if } |y| > c.
\end{cases}
\end{equation}
Like the squared error and absolute-value losses, this loss is even and convex.
The Tukey loss is of the form
\begin{equation}
\label{Tukey}
\loss(y) = \begin{cases}
1 - (1 - (y/c)^2)^3 & \text{if } |y| \le c, \\
1 & \text{if } |y| > c.
\end{cases}
\end{equation}
The Tukey loss is also even, but not convex as it is in fact bounded.
In both cases, $c \ge 0$ is a parameter traditionally chosen to maximize the efficiency of the estimator relative to the maximum likelihood estimator under a particular noise distribution (often Gaussian).

\begin{asp}
\label{asp:loss}
We assume everywhere that $\loss$ is non-negative, even, non-decreasing on away from the origin, and because these are the losses used in practice, we also assume that $\loss$ is either Lipschitz or has a Lipschitz derivative. (We obviously assume that $\loss$ is not constant.)
\end{asp}

\begin{rem}
\label{rem:MLE}
If in addition $c := \int \exp(- L(z)) \d z < \infty$, then $\phi(z) := c^{-1} \exp(- L(z))$ satisfies \aspref{noise}, and $\hat\theta$ in \eqref{loss} is the maximum likelihood estimator when this is the noise distribution. This additional condition is for example fulfilled when the loss is even and convex.
\end{rem}  

\subsection{Content}
Our focus will be on a generic M-estimator and its asymptotic properties (as $n \to \infty$).
In \secref{consistency}, we establish the consistency of the M-estimator under mild assumptions. 
In \secref{smooth}, we consider the `smooth setting', which corresponds here to the case where the template is Lipschitz. We obtain distributional limits and convergence rates in $\sqrt{n}$, and discuss some optimality properties: local asymptotic minimaxity, relative efficiency, and finite-sample minimaxity.
See \figref{smooth} for an illustration.
In \secref{non-smooth}, we consider the `non-smooth setting', which in particular includes the case where the template is discontinuous. Similarly, we obtain the limit distribution and the convergence rate, which is in $n$ when the template is piecewise Lipschitz with at least one discontinuity.
See \figref{non-smooth} for an illustration.
In \secref{flexible} we consider more flexible models for which we derive similar results.
In \secref{extensions}, we discuss a number of variants and extensions, including the setting where the design is fixed. 
In \secref{numerics}, we present the result of some numerical experiments, which are only meant to probe our theory in finite samples.

\begin{figure}[ht!]
\centering
\subfigure[When the template is Lipschitz, and the other model components satisfy some mild assumptions, the model is `smooth'. The M-estimator is $\sqrt{n}$-consistent and asymptotically normal.]{
\label{fig:smooth}
\includegraphics[width=0.4\textwidth]{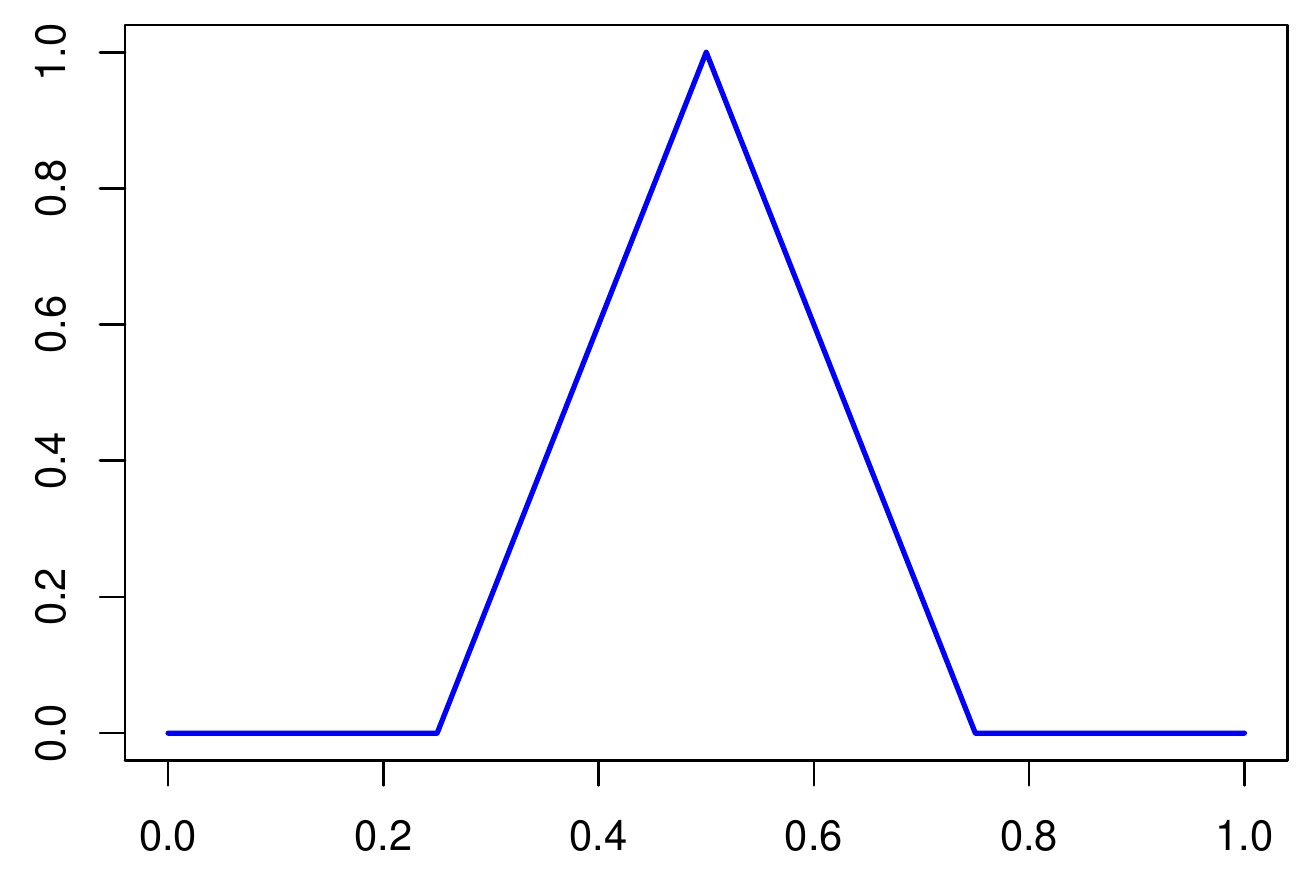}	}
\quad\quad\quad\quad
\subfigure[When the template is piecewise Lipschitz with at least one discontinuity, and the other model components satisfy some mild assumptions, the M-estimator is $n$-consistent and its asymptotic distribution, although not well-defined, is essentially the minimum of a marked Poisson process.]{
\label{fig:non-smooth}
\includegraphics[width=0.4\textwidth]{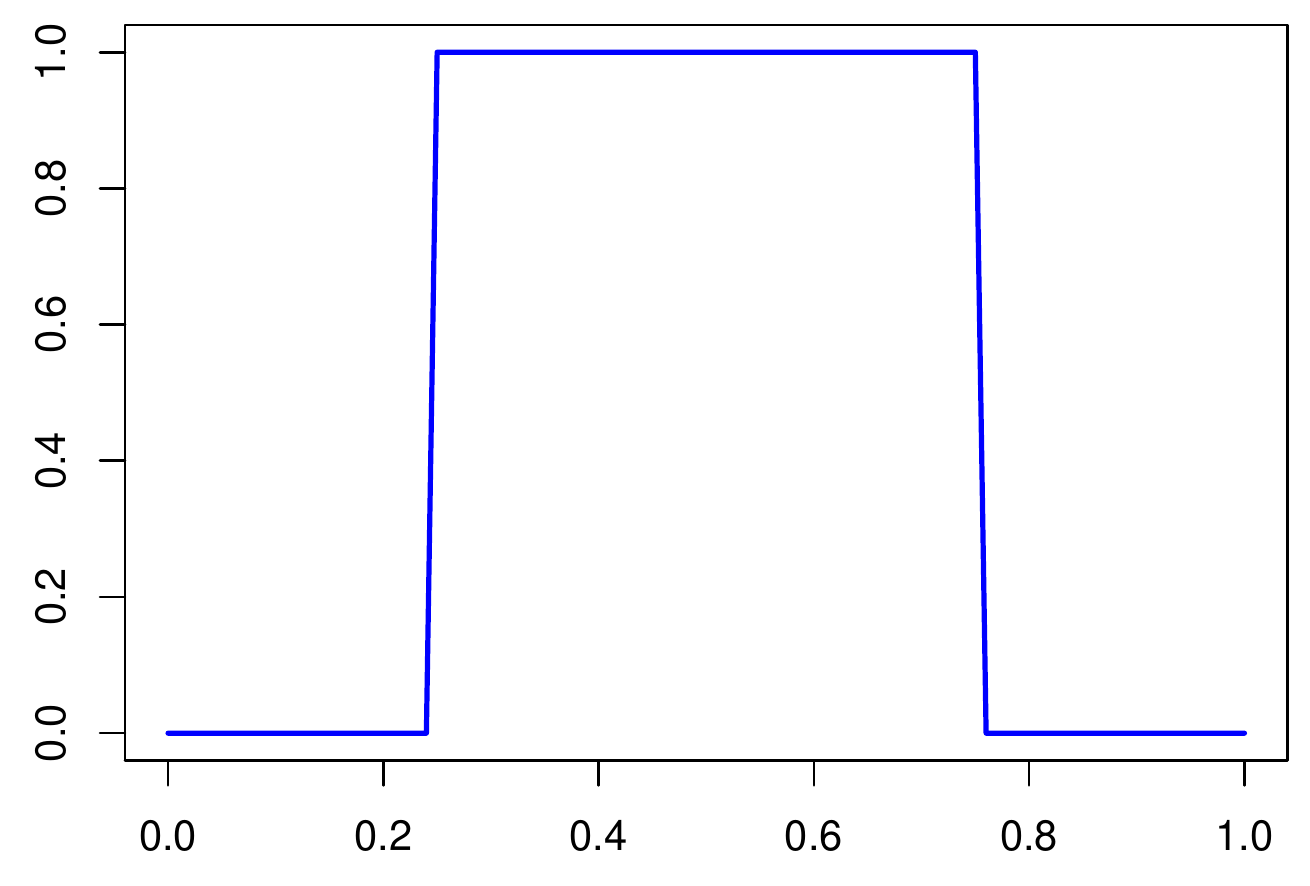}}
\caption{Two emblematic templates: a Lipschitz template on the left representing a `smooth' setting, and a piecewise Lipschitz template on the right representing a `non-smooth' setting.}
\end{figure}

\section{Consistency}
\label{sec:consistency}

Having chosen to work with a particular loss function $\loss$, define
\begin{equation}
\label{m}
m_\theta(x, y) := \loss(y - f(x - \theta)),
\end{equation}
so that the estimator in \eqref{loss} can be equivalently defined via
\begin{align}
\label{M-estimator}
\hat\theta_n := \argmin_\theta \widehat{M}_n(\theta), &&
\widehat{M}_n(\theta) := \frac1n \sum_{i=1}^n m_\theta(X_i, Y_i),
\end{align}
where we have added the subscript $n$ to emphasize that the estimator is being computed on a sample of size $n$.
So that we can carry out a large-sample analysis, we require that
\begin{equation}
\label{finite_loss}
\E[m_\theta(X, Y)] < \infty, \quad \text{for all $\theta$},
\end{equation}
where $(X,Y)$ is a generic observation and $\E$ is the expectation with respect to the distribution of $(X,Y)$ under $\theta^*$, the true value of the shift.
Under \aspref{template} and \aspref{loss}, the following can be seen to be sufficient
\begin{equation}
\label{Z0}
\E[\loss(Z)] < \infty,
\end{equation}
where $Z$ is a generic noise random variable.
For the squared error loss, the requirement is that the noise distribution have a finite second moment, while for the absolute-value and the Huber losses, the requirement is that the noise distribution have a finite first moment, and the requirement is automatically fulfilled when the loss is bounded.
When \eqref{finite_loss} holds, $\widehat{M}_n(\theta)$ has a well-defined expectation under $\theta^*$ given by
\begin{equation}
\label{M}
M(\theta) := \E[m_\theta(X, Y)].
\end{equation}
In particular, for any $\theta$, $\widehat{M}_n(\theta)$ converges to $M(\theta)$ in probability by the law of large numbers, and thus one anticipates that, under some additional conditions perhaps, $\hat\theta_n$ would converge to a minimizer of $M$.

\begin{asp}
\label{asp:M}
$M$ introduced in \eqref{M} is well-defined.
Moreover, $\theta^*$ is the unique minimum of $M$ and $\inf\{M(\theta) : |\theta - \theta^*| \ge \delta\} > M(\theta^*)$ for every $\delta > 0$.
\end{asp}

With \aspref{noise} and \aspref{loss} in place, \aspref{M} is satisfied for the squared error, absolute-value, and Huber losses, and also for the Tukey loss if in addition the noise distribution is unimodal. See \lemref{consistent}.
  
\begin{theorem}[Consistency]
\label{thm:consistent}
Under the basic assumptions, $\hat\theta_n$ is consistent for $\theta^*$.
\end{theorem}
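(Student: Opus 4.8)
The plan is to recognize this as an instance of the standard consistency result for M-estimators (argmin estimators) and to invoke the corresponding general theorem, as in \citep{van2000asymptotic}. Because $\hat\theta_n$ is by construction an \emph{exact} minimizer of $\widehat M_n$, the near-minimizer hypothesis $\widehat M_n(\hat\theta_n) \le \widehat M_n(\theta^*)$ is automatic, and only two ingredients remain to be checked: (A) the population objective $M$ has a well-separated minimum at $\theta^*$, i.e.\ $\theta^*$ is the unique minimizer and $\inf\{M(\theta) : |\theta-\theta^*| \ge \delta\} > M(\theta^*)$ for every $\delta > 0$; and (B) the uniform law of large numbers $\sup_{\theta} |\widehat M_n(\theta) - M(\theta)| \to 0$ in probability. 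Ingredient (A) is precisely \aspref{M}, which we are entitled to assume, so essentially all of the work lies in (B).

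First I would reduce (B) to a compact parameter space. Choosing $A$ so large that $f$ and $\lambda$ are both supported on $[-A,A]$, we have $f(X_i - \theta) = 0$ almost surely for every $\theta \notin [-2A,2A]$, so that $\widehat M_n(\theta)$ and $M(\theta)$ are both constant in $\theta$ off the compact interval $\Theta := [-2A,2A]$, which contains $\theta^*$. Since every such $\theta$ is bounded away from $\theta^*$, \aspref{M} makes the constant value of $M$ outside $\Theta$ strictly larger than $M(\theta^*)$; hence it suffices to establish (B) with the supremum restricted to $\Theta$.

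The main obstacle is this uniform convergence, which I would obtain by showing that the class $\mathcal F := \{m_\theta : \theta \in \Theta\}$ is Glivenko--Cantelli. The subtlety is that when $f$ is discontinuous, the map $\theta \mapsto m_\theta(x,y)$ is itself discontinuous for a positive-measure set of $x$, so the elementary continuity-based uniform law does not apply and I would instead argue by bracketing. Partition $\Theta$ into finitely many short subintervals $I_k$ and bracket each $m_\theta$ with $\theta \in I_k$ between $l_k := \inf_{\theta \in I_k} m_\theta$ and $u_k := \sup_{\theta \in I_k} m_\theta$. Since $\loss$ is (locally) Lipschitz and $f$ is bounded, the bracket width satisfies $u_k - l_k \le C(Z)\, \omega_f(X; I_k)$, where $\omega_f(X; I_k)$ is the oscillation of $f$ over the interval $\{X-\theta : \theta \in I_k\}$ of length $|I_k|$ and $C(Z)$ is an integrable factor depending only on the noise through the local Lipschitz constant of $\loss$ (this is where \eqref{Z0} supplies the integrable envelope). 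Because $f$ is c\`adl\`ag with only finitely many jumps it is piecewise uniformly continuous, so $\omega_f(x; I_k) \to 0$ as the mesh shrinks except when $\{x-\theta : \theta \in I_k\}$ straddles one of the jumps, an event confined to a set of $x$ of $\lambda$-measure $O(|I_k|)$. Together with the boundedness of $f$, and using the independence of $X$ and $Z$ to factor $\E[u_k - l_k] = \E[C(Z)]\,\E[\omega_f(X; I_k)]$, dominated convergence then forces every bracket below any prescribed $\epsilon$ once the mesh is fine enough. This yields finitely many $\epsilon$-brackets in $L_1(P)$, so $\mathcal F$ is Glivenko--Cantelli and (B) follows.

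With (A) supplied by \aspref{M}, (B) just established, and the near-minimizer condition automatic, the argmin-consistency theorem gives $\hat\theta_n \to \theta^*$ in probability. The only genuinely delicate point is the bracketing step, where the possible discontinuities of $f$ must be absorbed into sets of vanishing $\lambda$-measure; the remainder is routine bookkeeping around the envelope furnished by \eqref{Z0}.
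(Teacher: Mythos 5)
Your proposal is correct and takes essentially the same route as the paper's proof: the standard argmin-consistency theorem \cite[Th 5.7]{van2000asymptotic}, with the well-separated minimum supplied by \aspref{M} and the uniform law of large numbers obtained from a Glivenko--Cantelli property of $\{m_\theta\}$ over the effectively compact parameter interval $[-2A,2A]$. The only difference is one of detail: where the paper cites \cite[Ex 19.8]{van2000asymptotic} and asserts that its continuity hypothesis can be relaxed to almost-everywhere continuity, you prove the Glivenko--Cantelli property directly by bracketing, absorbing the finitely many jumps of $f$ into sets of vanishing $\lambda$-measure --- which is precisely the justification behind the paper's remark.
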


\begin{proof}
The requirements for applying \cite[Th 5.7]{van2000asymptotic} are {\em (i)} $M$ takes its minimum at $\theta^*$ and is bounded away from its minimum at $\theta$ away from $\theta^*$ --- which is implied in \aspref{M}; and
{\em (ii)} $\widehat{M}_n$ converges to $M$ uniformly, meaning that   
\begin{equation}\label{GC}
\sup_\theta |\widehat{M}_n(\theta) - M(\theta)| \to 0, \quad n \to \infty,
\end{equation}
in probability --- which is due to the fact that the function class $\{m_\theta : \theta \in \bbR\}$ is Glivenko--Cantelli. This latter property is essentially established in \cite[Ex 19.8]{van2000asymptotic}, where the assumption of continuity can be relaxed to continuity almost everywhere, which holds here by the fact that $f$ and $\loss$ are at least piecewise continuous. \cite[Ex 19.8]{van2000asymptotic} works under the assumption that the parameter space is compact, and this is effectively the case here. 
\end{proof}

\section{Smooth setting}
\label{sec:smooth}

We start by analyzing the situation where $f$ is Lipschitz.
It turns out that, when this is the case, under mild assumptions on the design and noise distributions as well as the loss function, the situation is `standard' for a parametric model in the sense that the M-estimator is $\sqrt{n}$ - consistent and asymptotically normal. In addition, the model is `smooth' in the sense of being quadratic mean differentiable \cite[Sec 5.5]{van2000asymptotic}, which then implies that the maximum likelihood estimator --- which can coincide with the M-estimator as mentioned in \remref{MLE} --- is locally asymptotic minimax and efficient. 

\begin{rem}
We remind the reader that a Lipschitz function is differentiable almost everywhere with a bounded derivative, and that it is the integral of that derivative (i.e., it is absolutely continuous). For such a function $g$, we will denote by $g'$ its derivative and by $|g'|_\infty$ its Lipschitz constant (which bounds the derivative when it exists).
\end{rem}

\subsection{Asymptotic normality}
\label{sec:normal}

We first consider a smooth loss, encompassing the squared error loss and the Huber loss, among others.

\begin{theorem}
\label{thm:normal}
Suppose the basic assumptions are in place. Assume, in addition, that $f$ is Lipschitz, that $\lambda$ is Lipschitz on an open set containing the support of $f(\cdot - \theta^*)$, and that $\loss$ has a Lipschitz first derivative.
Unless $\loss$ itself is Lipschitz, assume that the noise distribution has finite second moment.
Then $\sqrt{n} (\hat\theta_n - \theta^*)$ is asymptotically normal with mean $0$ and variance $\tau^2$, with
\begin{equation}
\label{tau}
\tau^2 := \frac{C_{\phi, \loss}}{\E[f'(X-\theta^*)^2]}, \quad C_{\phi, \loss} := \frac{\E[\loss'(Z)^2]}{\E[\loss''(Z)]^2}.
\end{equation}
\end{theorem}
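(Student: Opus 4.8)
The plan is to follow the standard $Z$-estimation route for M-estimators, since the smoothness of $f$ and $\loss$ lets us differentiate the criterion and work with estimating equations. Define the score map $\psi_\theta(x,y) := \loss'(y - f(x-\theta)) f'(x-\theta)$, which is (up to sign) the derivative of $m_\theta$ in $\theta$ wherever $f$ and $\loss'$ are differentiable. The population criterion $M$ is minimized at $\theta^*$ by \aspref{M}, so $\theta^*$ solves $\Psi(\theta) := \E[\psi_\theta(X,Y)] = 0$. Since $\hat\theta_n$ is consistent by \thmref{consistent}, the strategy is to invoke a general asymptotic normality theorem for $Z$-estimators, e.g.\ \cite[Th 5.21]{van2000asymptotic} or its empirical-process refinement \cite[Th 5.23]{van2000asymptotic}, which delivers
\[
\sqrt{n}(\hat\theta_n - \theta^*) = -\dot\Psi(\theta^*)^{-1} \cdot \frac{1}{\sqrt n}\sum_{i=1}^n \psi_{\theta^*}(X_i,Y_i) + o_P(1),
\]
and then identify the two ingredients $\dot\Psi(\theta^*)$ and $\mathrm{Var}(\psi_{\theta^*}(X,Y))$ so that the sandwich formula reduces to \eqref{tau}.

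The computations for these two ingredients should be routine given the symmetry assumption. For the variance of the score at $\theta^*$, note $Y - f(X-\theta^*) = Z$, so $\psi_{\theta^*}(X,Y) = \loss'(Z) f'(X-\theta^*)$; by independence of $Z$ and $X$ and the fact that $\E[\loss'(Z)] = 0$ (because $\loss$ is even, so $\loss'$ is odd, and $\phi$ is even by \aspref{noise}), this has mean zero and variance $\E[\loss'(Z)^2]\,\E[f'(X-\theta^*)^2]$. For the derivative of $\Psi$, differentiating under the expectation gives $\dot\Psi(\theta^*) = \E[\loss''(Z)]\,\E[f'(X-\theta^*)^2]$ after again using the independence of $Z$ and $X$ and the vanishing of $\E[\loss'(Z) f''(\cdot)]$-type cross terms (the term involving $f''$ drops out because it is multiplied by $\E[\loss'(Z)] = 0$). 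Plugging these into the sandwich and squaring the scalar yields exactly $\tau^2 = \E[\loss'(Z)^2]/\bigl(\E[\loss''(Z)]^2\,\E[f'(X-\theta^*)^2]\bigr)$, matching \eqref{tau}.

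The main obstacle is verifying the regularity hypotheses of the $Z$-estimator theorem rather than the algebra above. Specifically, I expect the real work to be (a) the stochastic equicontinuity / Donsker condition, i.e.\ showing that the recentered score process $\theta \mapsto \frac{1}{\sqrt n}\sum_i (\psi_\theta - \Psi(\theta))(X_i,Y_i)$ is asymptotically equicontinuous in a neighborhood of $\theta^*$; and (b) the differentiability of $\Psi$ at $\theta^*$ with the interchange of derivative and expectation justified. For (a), the key is that the class $\{\psi_\theta\}$ is Lipschitz in $\theta$ with a square-integrable envelope: because $f'$ is bounded (Lipschitz $f$), $f'$ is itself of bounded variation or at worst piecewise-Lipschitz only where $f$ is smooth, and $\loss'$ is Lipschitz, one controls the increments $\psi_{\theta_1} - \psi_{\theta_2}$ by a constant multiple of $|\theta_1 - \theta_2|$ times an envelope that is square-integrable under the moment assumption (finite second moment of the noise, or automatic when $\loss$ is Lipschitz so $\loss'$ is bounded). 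This makes the class Donsker via the standard bracketing/Lipschitz-in-parameter argument \cite[Ex 19.7]{van2000asymptotic}. For (b), the Lipschitz assumption on $\lambda$ near the support of $f(\cdot-\theta^*)$ is what guarantees $\theta \mapsto \Psi(\theta)$ is differentiable with the stated derivative, by a dominated-convergence / change-of-variables argument that moves the $\theta$-dependence onto $\lambda$. Once both are in place, the cited theorem applies verbatim and the conclusion follows.
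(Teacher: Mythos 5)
Your overall architecture is the paper's: both proofs run through \cite[Th 5.23]{van2000asymptotic}, use consistency from \thmref{consistent}, obtain the second-order behavior of the population criterion by a change of variables that moves the $\theta$-dependence onto $\lambda$ (which is where the Lipschitz assumption on $\lambda$ enters), and finish with the sandwich formula reducing to \eqref{tau}. But there is a genuine gap in your regularity step (a). You require the \emph{score} class $\{\psi_\theta\}$, $\psi_\theta(x,y) = \loss'(y-f(x-\theta))\, f'(x-\theta)$, to be Lipschitz in $\theta$ with a square-integrable envelope, and you justify this by asserting that, since $f$ is Lipschitz, ``$f'$ is itself of bounded variation or at worst piecewise-Lipschitz.'' That assertion is false: $f$ Lipschitz only makes $f'$ a bounded measurable function defined almost everywhere. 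For instance, $f'$ may be the indicator of a fat Cantor set (any bounded measurable function is the a.e.\ derivative of its own Lipschitz integral), in which case $f'(x-\theta_1)-f'(x-\theta_2)$ admits no modulus-of-continuity control in $|\theta_1-\theta_2|$ at all, so the Lipschitz bound on the increments $\psi_{\theta_1}-\psi_{\theta_2}$ that you invoke cannot hold, and even the Donsker property of the score class is in doubt. This Lipschitz-on-the-score condition is precisely the hypothesis of the $Z$-estimator theorem \cite[Th 5.21]{van2000asymptotic}, which is why that route is not available under the stated assumptions.

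The repair is to impose the Lipschitz-in-$\theta$ condition on the criterion $m_\theta$ itself rather than on its derivative --- this is the actual hypothesis of \cite[Th 5.23]{van2000asymptotic} and is what the paper verifies: $|m_{\theta_1}(x,y)-m_{\theta_2}(x,y)|$ is bounded by a square-integrable factor times $|f(x-\theta_1)-f(x-\theta_2)| \le |f'|_\infty |\theta_1-\theta_2|$, which needs only $f$ (never $f'$) to be Lipschitz. All remaining smoothness is then demanded only of the deterministic map $M$, namely a second-order Taylor expansion at $\theta^*$, and that is exactly where your step (b) --- the change of variables onto $\lambda$ plus dominated convergence --- is the correct and necessary argument. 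Relatedly, your heuristic computation of $\dot\Psi(\theta^*)$ in terms of vanishing ``$\E[\loss'(Z)f''(\cdot)]$-type cross terms'' presupposes a second derivative of $f$ that need not exist; after the change of variables the cross term involves $\lambda'$ instead of $f''$, and it vanishes at $\theta^*$ for the reason you cite, namely $\int \loss'(z)\phi(z)\,\d z = 0$ by symmetry. With step (a) replaced by the criterion-level Lipschitz condition, your argument coincides with the paper's proof.
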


\begin{rem}
The assumption that $\lambda$ is Lipschitz is not needed, as can be seen by using the approach of \secref{non-smooth}. However, it makes for a particularly straightforward proof.
\end{rem}

\begin{proof}
Assume $\theta^* = 0$ without loss of generality.
In \cite[Th 5.23]{van2000asymptotic}, the first condition is that $\theta \mapsto m_\theta(x,y)$ be differentiable at $0$ for almost every $(x,y)$, which is clearly the case here, as it is differentiable with derivative
\begin{equation}
\label{dot_m}
\dot m_\theta(x,y) = f'(x-\theta) \loss'(y-f(x-\theta)),
\end{equation}

If $\loss$ itself is Lipschitz, we have
\begin{align}
|m_{\theta_1}(x,y) - m_{\theta_2}(x,y)|
&= |\loss(y-f(x-\theta_1)) - \loss(y-f(x-\theta_2))| \\
&\le |\loss'|_\infty\, |f(x-\theta_1) - f(x-\theta_2)| \\
&\le |\loss'|_\infty\, |f'|_\infty |\theta_1 - \theta_2| \\
&=: \overline{m}(x,y)\, |\theta_1 - \theta_2|.
\end{align}
Otherwise, we have
\begin{align}
|m_{\theta_1}(x,y) - m_{\theta_2}(x,y)|
&= |\loss(y-f(x-\theta_1)) - \loss(y-f(x-\theta_2))| \\
&\le |\loss''|_\infty\, (|y-f(x)| + 2|f|_\infty)\, |f'|_\infty\, |\theta_1 - \theta_2| \\
&=: \overline{m}(x,y)\, |\theta_1 - \theta_2|.
\end{align}
In the second inequality we used the fact that $|y - f(x-\theta)| \le |y-f(x)| + |f|_\infty$ for all $\theta$ and that $|\loss'(y)| \le |\loss''|_\infty |y|$ for all $y$ since $\loss'(0) = 0$.
In either case, $\E[\overline{m}(X, Y)^2] < \infty$, so that the second condition of \cite[Th 5.23]{van2000asymptotic} is satisfied.

The third condition in that theorem is that $M(\theta)$ admits a Taylor expansion of order two at $\theta^*$ with nonzero second order term. Our assumptions imply that we can differentiate once inside the expectation defining $M$ to obtain its first derivative.
To see this, note that $\theta \mapsto m_\theta(x,y)$ has first derivative given by \eqref{dot_m}, which is dominated by $|f'|_\infty |\loss'|_\infty$ if $\loss$ is Lipschitz, and by $|f'|_\infty |\loss''|_\infty (|y-f(x)| + 2|f|_\infty)$ otherwise, which is integrable in either case. Hence, $M$ is differentiable, with
\begin{align*}
M'(\theta) 
&= \E[\dot m_\theta(X,Y)] \\
&= \E[f'(X-\theta) \loss'(Y-f(X-\theta))] \\
&= \int\int f'(x-\theta) \loss'(f(x)-f(x-\theta)  +z) \phi(z) \lambda(x) \d z \d x \\
&= \int\int f'(x) \loss'(f(x+\theta)-f(x)  +z) \phi(z) \lambda(x+\theta) \d z \d x.
\end{align*}
Thus, by applying a simple change of variables we have transferred $\theta$ away from $f'$ at the cost of burdening $\lambda$. But our assumptions are exactly what we need to differentiate inside the integral, since the integrand is differentiable with derivative 
\begin{equation}
f'(x) \big\{f'(x+\theta) \loss''(f(x+\theta)-f(x) +z) \lambda(x+\theta) + \loss'(f(x+\theta) - f(x) + z) \lambda'(x+\theta)\big\} \phi(z),
\end{equation}
which is dominated by
\begin{equation}
|f'|_\infty \big\{|f'|_\infty |\loss''|_\infty |\lambda|_\infty + |\loss'|_\infty |\lambda'|_\infty\big\} \phi(z),
\end{equation} 
if $\loss$ is Lipschitz, and otherwise by 
\begin{equation}
|f'|_\infty \big\{|f'|_\infty |\loss''|_\infty |\lambda|_\infty + |\loss''|_\infty (|z| + 2 |f|_\infty) |\lambda'|_\infty\big\} \phi(z),
\end{equation}
and this is integrable in either case. 
(Recall that $\lambda$ is compactly supported.) 
Hence, $M$ is indeed twice differentiable, with
\begin{align*}
M''(0) 
&= \int \int f'(x) \big\{f'(x) \loss''(z) \lambda(x) + \loss'(z) \lambda'(x)\big\} \phi(z) \d z \d x \\
&= \int \int f'(x)^2 \loss''(z) \lambda(x) \phi(z) \d z \d x \\
&= \E[f'(X)^2] \E[\loss''(Z)],
\end{align*}
because $\int_{-\infty}^\infty \loss'(z) \phi(z) \d z = 0$ due to the fact that $\loss'$ is odd and $\phi$ is even.

The last condition in \cite[Th 5.23]{van2000asymptotic} is that $\hat\theta_n$ be consistent, which we have already established in \thmref{consistent}.

Therefore, \cite[Th 5.23]{van2000asymptotic} applies, and implies that $\sqrt{n} (\hat\theta_n - \theta^*)$ is asymptotically normal with mean $0$ and variance 
\begin{equation}
\frac{\E[\dot m_0(X,Y)^2]}{M''(0)^2},
\end{equation}
the latter reducing to \eqref{tau} after some simplifications.
\end{proof}


\thmref{normal} applies to the squared error, the Huber, and the Tukey losses with no additional conditions other than the basic assumptions.
For squared error loss, the factor $C_{\phi, \loss}$ in the asymptotic variance is given by
\begin{equation}
\label{tau_L2}
C_{\rm squared} := \E[Z^2] = \text{noise variance}.
\end{equation}
For the Huber loss \eqref{Huber},
\begin{equation}
\label{tau_huber}
C_{\rm huber} := \frac{\E[Z^2 \wedge c^2]}{\P(|Z| \le c)^2}.
\end{equation}

\thmref{normal} does not apply to the absolute-value loss. But very similar arguments can be used to obtain a normal limit distribution for that loss.

\begin{theorem}
\label{thm:normal_L1}
In the context of \thmref{normal}, assume that $\loss$ is the absolute-value loss and that $\phi$ is continuous and strictly positive at the origin. Then the same conclusion holds with 
\begin{equation}
\label{tau_L1}
C_{\phi,\loss} = \frac1{4\phi(0)^2}.
\end{equation} 
\end{theorem}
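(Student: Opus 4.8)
The plan is to invoke \cite[Th 5.23]{van2000asymptotic} once more, exactly as in the proof of \thmref{normal}: the absolute-value loss fails to have a Lipschitz derivative, but it is itself Lipschitz and differentiable away from the origin, which is all that theorem needs. Taking $\theta^* = 0$, the map $\theta \mapsto m_\theta(x,y) = |y - f(x-\theta)|$ is differentiable at $0$ whenever $y \ne f(x)$, hence for almost every $(x,y)$ since $Z$ admits a density, with derivative $\dot m_0(x,y) = f'(x)\,\mathrm{sign}(y-f(x))$. The envelope condition holds with the constant $\overline m \equiv |\loss'|_\infty |f'|_\infty = |f'|_\infty$ (the loss is $1$-Lipschitz and $f$ is Lipschitz), which is trivially square-integrable, and consistency is \thmref{consistent}. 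So everything comes down to the one remaining hypothesis: that $M$ admits a second-order Taylor expansion at $0$ with nonzero second-order coefficient.

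This is the only real obstacle, because $\theta \mapsto m_\theta(x,y)$ has a kink and cannot be differentiated twice in $\theta$ (there is no $f''$ to appeal to); the resolution is that integrating over the noise smooths the kink out. Concretely, I would set $h(u) := \E|Z - u| = \int |z-u|\phi(z)\,\d z$, so that $M(\theta) = \int h\big(f(x-\theta)-f(x)\big)\lambda(x)\,\d x$. Differentiating under the integral (legitimate since the integrand is bounded and $\partial_u|z-u|$ exists for $z \ne u$) gives $h'(u) = 2F(u)-1$ and $h''(u) = 2\phi(u)$, where $F$ is the noise distribution function. Thus $h(0) = \E|Z|$, $h'(0) = 0$ by symmetry of $\phi$, and $h''(0) = 2\phi(0)$, with $h''$ continuous at $0$ precisely because $\phi$ is continuous there.

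I would then produce the expansion as follows. Since $f$ is Lipschitz, $|f(x-\theta)-f(x)| \le |f'|_\infty|\theta|$, so for $|\theta|$ small the argument of $h$ lies in any prescribed neighborhood of the origin. Combining the second-order Taylor expansion of $h$ (with integral remainder) with the continuity of $h''$ at $0$, and the dominated-convergence fact that $\theta^{-2}\int (f(x-\theta)-f(x))^2\lambda(x)\,\d x \to \E[f'(X)^2]$ (the integrand is bounded by $|f'|_\infty^2$ and converges $\lambda$-a.e. to $f'(x)^2$), one obtains $M(\theta) = M(0) + \phi(0)\,\E[f'(X)^2]\,\theta^2 + o(\theta^2)$. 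Hence the expansion holds with $M'(0) = 0$ and $M''(0) = 2\phi(0)\,\E[f'(X)^2]$, which is nonzero under the standing identifiability assumption, being the same quantity that appears in the denominator of \eqref{tau}.

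It then remains to assemble the variance. By \cite[Th 5.23]{van2000asymptotic}, $\sqrt n(\hat\theta_n - \theta^*)$ is asymptotically normal with variance $\E[\dot m_0(X,Y)^2]/M''(0)^2$. Here $\E[\dot m_0(X,Y)^2] = \E[f'(X)^2\,\mathrm{sign}(Z)^2] = \E[f'(X)^2]$, because $\mathrm{sign}(Z)^2 = 1$ almost surely and $X$ is independent of $Z$. Therefore the variance equals $\E[f'(X)^2]/\big(2\phi(0)\E[f'(X)^2]\big)^2 = 1/\big(4\phi(0)^2\,\E[f'(X)^2]\big)$, which is \eqref{tau} with $C_{\phi,\loss} = 1/(4\phi(0)^2)$, as claimed.
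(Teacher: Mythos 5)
Your proof is correct, but it reaches the crucial step --- the second-order Taylor expansion of $M$ at $\theta^*$ --- by a genuinely different route than the paper. The paper's proof mirrors that of \thmref{normal}: it differentiates under the expectation to get $M'(\theta) = \int f'(x)\big(1-2\Phi(f(x)-f(x+\theta))\big)\lambda(x+\theta)\,\d x$ after a change of variables that transfers $\theta$ from $f$ onto $\lambda$, and then differentiates once more inside the integral, which requires the Lipschitz assumption on $\lambda$ inherited from ``the context of \thmref{normal}.'' You instead never differentiate $M$ in $\theta$ at all: you smooth the kink of the absolute value through the noise by introducing $h(u) := \E|Z-u|$, expand $h$ to second order at the origin ($h'(0)=0$, $h''(0)=2\phi(0)$, continuity of $h''$ at $0$ coming from continuity of $\phi$ there), and combine this with the dominated-convergence limit $\theta^{-2}\int (f(x-\theta)-f(x))^2\lambda(x)\,\d x \to \E[f'(X)^2]$. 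This is essentially the paper's own \lemref{Delta} technique from the non-smooth section (\secref{non-smooth}) imported into the smooth setting, and it buys something concrete: your argument makes no use of the smoothness of $\lambda$, confirming the paper's remark after \thmref{normal} that the Lipschitz assumption on the design density is dispensable; the paper's route, by contrast, buys uniformity of exposition with the proof of \thmref{normal}. Two small points you elide, both also elided by the paper: differentiability of $\theta \mapsto m_\theta(x,y)$ at $0$ needs $f$ differentiable at $x$ (true for a.e.\ $x$ since $f$ is Lipschitz) in addition to $y \ne f(x)$; and positivity of $\E[f'(X)^2]$ is an implicit standing assumption (it is the denominator of \eqref{tau}) rather than a formal consequence of identifiability.
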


\begin{proof}
Assume $\theta^* = 0$ without loss of generality.

In \cite[Th 5.23]{van2000asymptotic}, the first condition is that $\theta \mapsto m_\theta(x,y)$ be differentiable at $0$ for almost every $(x,y)$, which is clearly the case here, as it is differentiable with derivative
\begin{equation}
\label{dot_m_L1}
\dot m_\theta(x,y) = f'(x-\theta) \sign(y-f(x-\theta)),
\end{equation}

We also have
\begin{align}
|m_{\theta_1}(x,y) - m_{\theta_2}(x,y)|
&= \big||y-f(x-\theta_1)| - |y-f(x-\theta_2)|\big| \\
&\le |f(x-\theta_1) - f(x-\theta_2)| \\
&\le |f'|_\infty\, |\theta_1 - \theta_2|.
\end{align}
Hence, the second condition of \cite[Th 5.23]{van2000asymptotic} is satisfied.
 
Our assumptions imply that we can differentiate once inside the expectation defining $M$ to obtain its first derivative
\begin{align*}
M'(\theta) 
&= \E[\dot m_\theta(X,Y)] \\
&= \E[f'(X-\theta) \sign(Y-f(X-\theta))] \\
&= \int \int f'(x-\theta) \sign(f(x)-f(x-\theta)  +z) \phi(z) \lambda(x) \d z \d x \\
&= \int \int f'(x) \sign(f(x+\theta)-f(x)  +z) \phi(z) \lambda(x+\theta) \d z \d x \\
&= \int f'(x) \big(1 -2 \Phi(f(x)-f(x+\theta))\big) \lambda(x+\theta) \d x,
\end{align*}
where $\Phi(z) := \int_{-\infty}^z \phi(u) \d u$.
The integrand is differentiable with derivative 
\begin{equation}
f'(x) \big\{2 f'(x+\theta) \phi(f(x)-f(x+\theta)) \lambda(x+\theta) + \big(1 -2 \Phi(f(x)-f(x+\theta))\big) \lambda'(x+\theta)\big\}
\end{equation}
which is bounded and therefore dominated as $\theta$ varies.
The remaining arguments are as in the proof of \thmref{normal}.
\end{proof}

\begin{rem}
The expression for the asymptotic variance for the absolute-value loss given could have been anticipated based on the corresponding expression for the Huber loss given in \eqref{tau_huber}. Indeed, as $c \to 0$, the Huber loss \eqref{Huber} converges pointwise to the absolute-value loss, so that we could have speculated that the same would be true for the asymptotic variances.  It turns out that this prediction would have been correct, as it is indeed the case that
\begin{equation}
\E[Z^2 \wedge c^2]/\P(|Z| \le c)^2 \xrightarrow{c \to 0} 1/4\phi(0)^2
\end{equation}
when $\phi$ may be taken continuous at the origin. It is also the case that the Huber loss converges pointwise to the squared error loss when $c \to \infty$ instead, and that 
\begin{equation}
\E[Z^2 \wedge c^2]/\P(|Z| \le c)^2 \xrightarrow{c \to \infty} \E[Z^2].
\end{equation}
\end{rem}

\begin{rem}
\label{rem:location_model}
The constant $C_{\phi, \loss}$ in \thmref{normal} and \thmref{normal_L1} is the asymptotic variance in the classical location model where $Y_i = \theta^* + Z_i$. The only way in which the model we assume \eqref{model} is different asymptotically is in the denominator in \eqref{tau}.
\end{rem}

\subsection{Local asymptotic minimaxity}
\label{sec:LAM}

Under the same smoothness condition on the template as in \secref{normal}, namely, assuming that $f$ has a bounded derivative, and under some smoothness assumption on the noise density (and not the design density this time), the statistical model is smooth in the sense of being {\em quadratic mean differentiable (QMD)}. 
Indeed, under $\theta$, the joint distribution of $(X,Y)$ has density 
\begin{equation}
\label{joint_density}
p_\theta(x, y) := \lambda(x) \phi(y - f(x-\theta)).
\end{equation} 
The function $\theta \mapsto p_\theta(x,y)$ is differentiable when $f$ and $\phi$ are.
The {\em information} at $\theta$ is defined as 
\begin{align}
I_\theta
&:= \int \int \frac{\dot p_\theta(x, y)^2}{p_\theta(x,y)} \d y \d x \\
&= \int \int \frac{\lambda(x) f'(x-\theta)^2 \phi'(y - f(x-\theta))^2}{\phi(y - f(x-\theta))} \d y \d x \\
&= \int \lambda(x) f'(x-\theta)^2 \d x \times \int \frac{\phi'(y)^2}{\phi(y)} \d y,
\end{align}
and when it is finite and continuous, as it is the case here, the model is QMD \cite[Th 12.2.1]{lehmann2005testing}.
 
When a model is QMD then, under additional mild assumptions, the MLE behaves in a standard way: {\em If $\hat\theta_n$ denotes the MLE, then $\sqrt{n} (\hat\theta_n - \theta)$ is asymptotically normal with zero mean and variance $1/I_\theta$ under $\theta$.}
These additional assumptions are fulfilled, for example, when $(x,y) \mapsto \sup_\theta\, \dot p_\theta(x, y)^2/p_\theta(x,y)$ is square integrable \cite[Th 5.39]{van2000asymptotic}.
If the M-estimator is the MLE, meaning when $\phi(y) \propto \exp(-\loss(y))$, this is the case under the conditions of \thmref{normal}.

It turns out this behavior is best possible asymptotically as we describe next. 
In the present context, we say that an estimator $\hat\theta_n$ is {\em locally asymptotically minimax (LAM)} at some $\theta_0$ if 
\begin{equation}
\lim_{\delta \to 0} \liminf_{n \to \infty} \sup_{|\theta - \theta_0| < \delta} \E_\theta \big[\sqrt{n} |\hat\theta_n - \theta|\big]
\end{equation}
achieves the minimum possible value among all estimators.
Here $\E_\theta$ denotes the expectation with respect to $p_\theta$.
We say that an estimator is LAM if it is LAM at every $\theta_0$.
And, indeed, the MLE is LAM under the same conditions, meaning those of \cite[Th 5.39]{van2000asymptotic}, which again hold in the context of \thmref{normal}.
Hence, we may conclude the following.

\begin{theorem}
\label{thm:LAM}
Under the conditions of \thmref{normal}, and assuming in addition that $\phi(y) \propto \exp(-\loss(y))$, the M-estimator is LAM. 
\end{theorem}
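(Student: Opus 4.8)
The plan is to reduce the claim to the classical local asymptotic minimax theory for the MLE in a quadratic-mean-differentiable model, after first noting that under the added hypothesis the M-estimator \emph{is} the MLE. Indeed, when $\phi(y) \propto \exp(-\loss(y))$, the objective $\sum_i \loss(Y_i - f(X_i-\theta))$ equals the negative log-likelihood up to an additive constant, so $\hat\theta_n$ in \eqref{loss} is exactly the maximum likelihood estimator; this is the content of \remref{MLE}. It therefore suffices to show that the MLE is LAM at every $\theta_0$, and the remainder is the assembly of the ingredients already laid out in the discussion preceding the statement.

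First I would confirm quadratic mean differentiability. Because $f$ is Lipschitz and $\loss$ has a Lipschitz derivative, $\phi \propto \exp(-\loss)$ is continuously differentiable, so $\theta \mapsto p_\theta(x,y)$ in \eqref{joint_density} is differentiable and the information $I_\theta = \E[f'(X-\theta)^2]\int \phi'(y)^2/\phi(y)\,\d y$ is finite and continuous in $\theta$. By \cite[Th 12.2.1]{lehmann2005testing}, the model is QMD. Second, I would verify the regularity hypotheses of \cite[Th 5.39]{van2000asymptotic} that upgrade QMD to the efficient and LAM behavior of the MLE; the relevant condition, square-integrability of $(x,y) \mapsto \sup_\theta \dot p_\theta(x,y)^2/p_\theta(x,y)$, holds under the hypotheses of \thmref{normal} by the same envelope/domination estimates used in its proof, since there $|f'|_\infty < \infty$, $\loss'$ is controlled linearly via $\loss'(0)=0$, and $\lambda, f$ are compactly supported. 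Invoking \cite[Th 5.39]{van2000asymptotic} then gives that $\sqrt{n}(\hat\theta_n - \theta)$ is asymptotically $N(0, 1/I_\theta)$ and that $\hat\theta_n$ attains the minimal value of $\lim_{\delta\to0}\liminf_n \sup_{|\theta-\theta_0|<\delta}\E_\theta[\sqrt{n}|\hat\theta_n - \theta|]$, i.e., the MLE is LAM.

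The only substantive point — and hence the main obstacle — is the uniform-in-$\theta$ control needed for the square-integrability condition of \cite[Th 5.39]{van2000asymptotic}, since one must take a supremum of the score over $\theta$ rather than fix $\theta$. This is handled by the compact support of $f$ and $\lambda$, which confines the effective parameter range to a bounded interval (as noted after the model assumptions), together with the bound $|\loss'(u)| \le |\loss''|_\infty |u|$ used in \thmref{normal}; these yield an integrable square envelope. With that verification in hand, the theorem follows as a corollary of the discussion preceding it.
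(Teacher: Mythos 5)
Your proposal is correct and follows essentially the same route as the paper: identify the M-estimator with the MLE via \remref{MLE}, establish quadratic mean differentiability through the finiteness and continuity of the information using \cite[Th 12.2.1]{lehmann2005testing}, and then verify the square-integrability condition of \cite[Th 5.39]{van2000asymptotic} under the hypotheses of \thmref{normal} to conclude that the MLE, and hence the M-estimator, is LAM. The paper presents this as a short assembly of the discussion in \secref{LAM} rather than a formal proof, and your write-up matches that assembly step for step, including the uniform-in-$\theta$ domination of the score afforded by the compact supports of $f$ and $\lambda$.
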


\subsection{Relative efficiency}
\label{sec:ARE}

Although the Huber estimator and the least absolute-value estimator can be motivated as maximum likelihood estimators in their own right, they are often considered as robust compromises to the otherwise preferred least squares estimator. The rationale behind this is in general flimsy, as it amounts to assuming that the noise distribution is Gaussian, as in that case the least squares estimator is the MLE and thus LAM. In signal processing applications, however, the normal assumption can be justified. In any case, this is the perspective we adopt. 

We assume therefore that the noise distribution is Gaussian, making the least squares estimator the gold standard for asymptotic comparisons.  In that context, if $\hat\theta_n$ is another estimator such that $\sqrt{n}(\hat\theta_n - \theta)$ is asymptotically normal with mean 0 and variance $\tau^2$ under $\theta$, then its relative efficiency with respect to the MLE is 
\begin{equation}
\frac{\tau^2}{\tau^2_{\rm mle}},
\end{equation}
and gives, asymptotically, how many more samples this estimator requires in order to achieve the same precision as the MLE in terms of the width of asymptotically-valid confidence intervals at an arbitrary level of confidence. 
In our context, $\tau_{\rm mle}$ is obtained from \eqref{tau} with \eqref{tau_L2}.

Using the expression \eqref{tau_huber}, the relative efficiency of the Huber estimator with parameter $c$ is 
\begin{equation}
\frac{\E[Z^2 \wedge c^2]}{\E[Z^2] \P(|Z| \le c)^2}.
\end{equation}
As anticipated, this is greater than 1 for all values of $c$ and converges to 1 as $c \to \infty$ (as the Huber loss converges to the squared error loss).
Using the expression \eqref{tau_L1}, the relative efficiency of the least absolute-value estimator is 
\begin{equation}
\frac{1}{4 \phi(0)^2 \E[Z^2]}.
\end{equation}
In line with \remref{location_model}, in both cases we recover the relative efficiency for the problem of estimating a normal mean.

\subsection{Finite-sample minimaxity}
\label{sec:minimax}

When the model is smooth, the M-estimator is locally asymptotically minimax when it is the MLE, as we saw in \secref{LAM}, and more generally achieves the optimal $\sqrt{n}$ rate of convergence only losing in the leading constant, as detailed in \secref{ARE}. What about in finite samples?

In the present context, we define the risk of an estimator $\hat\theta_n$ at $\theta$ as 
\begin{equation}
\risk(\hat\theta_n, \theta) := \E_\theta \big[|\hat\theta_n - \theta|\big].
\end{equation}
The {\em minimax risk} is then defined as
\begin{equation}
R_n^* := \inf_{\hat\theta_n} \sup_{\theta} \risk(\hat\theta_n, \theta),
\end{equation}
where the infimum is over all estimators taking in a sample of size $n$.

\begin{lem}
\label{lem:information}
Suppose the basic assumptions are in place. Assume, in addition, that $f$ is Lipschitz, that $\lambda$ is Lipschitz on an open set containing the support of $f(\cdot - \theta^*)$, and that $t \mapsto B(t) := \int \phi(y) \log \phi(y-t) \d y$ has a derivative which is Lipschitz near the origin. Then the minimax risk satisfies $\sqrt{n} R_n^* \gtrsim 1$.
\end{lem}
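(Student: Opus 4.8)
The plan is to prove the lower bound by Le Cam's two-point method, comparing two shifts $\theta_0$ and $\theta_1 = \theta_0 + \Delta$ with $\Delta \asymp 1/\sqrt{n}$, both chosen inside the effective parameter space. First I would reduce estimation to testing: for any estimator $\hat\theta_n$, classifying according to whether $\hat\theta_n$ is closer to $\theta_0$ or to $\theta_1$ yields the standard bound
\[
\max_{j \in \{0,1\}} \E_{\theta_j}\big[\,|\hat\theta_n - \theta_j|\,\big] \ge \frac{\Delta}{4}\Big(1 - \big\|P_{\theta_0}^{\otimes n} - P_{\theta_1}^{\otimes n}\big\|_{\mathrm{TV}}\Big),
\]
where $P_\theta$ is the law of $(X,Y)$ with density $p_\theta$ from \eqref{joint_density} and $P_\theta^{\otimes n}$ is the law of the full sample. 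Taking the infimum over estimators, and noting that $\sup_\theta \risk(\hat\theta_n,\theta)$ dominates the left-hand side, gives $\sqrt{n}\,R_n^* \ge \frac{\sqrt{n}\,\Delta}{4}(1 - \|P_{\theta_0}^{\otimes n} - P_{\theta_1}^{\otimes n}\|_{\mathrm{TV}})$, so it remains to keep the total variation bounded away from $1$ for a suitable $\Delta \asymp 1/\sqrt{n}$.

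The core computation is to control the Kullback--Leibler divergence between $P_{\theta_0}$ and $P_{\theta_1}$ and feed it into Pinsker's inequality. Performing the substitution $u = y - f(x-\theta_0)$ in the inner ($y$-)integral makes the divergence factorize through $B$:
\[
\mathrm{KL}(P_{\theta_0}\,\|\,P_{\theta_1}) = \int \lambda(x)\,\big(B(0) - B(s_x)\big)\,\d x, \qquad s_x := f(x-\theta_1) - f(x-\theta_0),
\]
which is exactly where the hypothesis on $B$ enters. Here I would use two facts: $B$ is even because $\phi$ is even (\aspref{noise}), so $B'(0) = 0$; and $B'$ is Lipschitz near the origin by assumption, so that $B(0) - B(s) \le \tfrac{C}{2}\,s^2$ for all sufficiently small $s$ (this is the second-order, Fisher-information behavior $B(0)-B(s) = \tfrac12\,(-B''(0))\,s^2 + o(s^2)$ that motivates the name of the lemma). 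Since $f$ is Lipschitz, $|s_x| \le |f'|_\infty \,\Delta$ uniformly in $x$, so for $\Delta$ small every $s_x$ lies in the region where this quadratic bound applies; integrating against the probability density $\lambda$ yields $\mathrm{KL}(P_{\theta_0}\,\|\,P_{\theta_1}) \le \tfrac{C}{2}\,|f'|_\infty^2\,\Delta^2 =: K_0\,\Delta^2$.

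Finally, tensorization gives $\mathrm{KL}(P_{\theta_0}^{\otimes n}\,\|\,P_{\theta_1}^{\otimes n}) = n\,\mathrm{KL}(P_{\theta_0}\,\|\,P_{\theta_1}) \le n K_0 \Delta^2$, and Pinsker's inequality bounds the total variation by $\sqrt{n K_0/2}\,\Delta$. Choosing $\Delta = (2 n K_0)^{-1/2} \asymp 1/\sqrt{n}$ makes this at most $1/2$, whence $R_n^* \ge \Delta/8 \asymp 1/\sqrt{n}$, i.e. $\sqrt{n}\,R_n^* \gtrsim 1$. The main obstacle is the KL step: justifying the change of variables and the differentiation under the integral defining $B$, and then the uniform quadratic control of $B(0)-B(s_x)$ over the range of $s_x$ — which is precisely what the Lipschitz-derivative assumption on $B$ (together with the smoothness of $f$ and $\lambda$) buys; once $\mathrm{KL} \lesssim \Delta^2$ is established the remainder is routine. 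Note that the argument needs only an upper bound on the divergence and never a lower bound, so it does not invoke the identifiability part of \aspref{template}; the two-point inequality is valid for any admissible $\theta_0 \neq \theta_1$, and only the $O(\Delta^2)$ estimate is required to certify the $1/\sqrt{n}$ rate.
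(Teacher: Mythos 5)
Your proof is correct, and its skeleton --- a two-point reduction followed by a quadratic bound on the single-sample Kullback--Leibler divergence expressed through $B$ --- is the same as the paper's, which invokes Tsybakov's Theorems 2.1--2.2 where you spell out Le Cam's method with Pinsker's inequality. The interesting difference is in how the key estimate ${\sf KL} \lesssim \theta^2$ is obtained. The paper works with $A(\theta) := \E[-\log p_\theta(X,Y)] = -\int B(f(x-\theta)-f(x))\lambda(x)\,\d x$ and differentiates it \emph{twice under the integral sign}, which requires dominated-convergence arguments involving $f'$, $B''$, and --- crucially --- $\lambda'$; this is where the hypothesis that $\lambda$ is Lipschitz gets used. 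You instead exploit the pointwise bound: since $B'(0)=0$ and $B'$ is Lipschitz near $0$, one has $|B(0)-B(s)| \le \tfrac{C}{2}s^2$ for $|s|$ small, and since $|s_x| \le |f'|_\infty \Delta$ uniformly in $x$, integrating against $\lambda$ gives the quadratic KL bound with no smoothness of $\lambda$ whatsoever. (This is in fact exactly the route the paper takes later in the non-smooth analogue, \lemref{information_nonsmooth}, so your argument shows the $\lambda$-Lipschitz hypothesis in the present lemma is dispensable.) A second, minor difference: you get $B'(0)=0$ from the evenness of $B$, inherited from the symmetry of $\phi$ (\aspref{noise}), whereas the paper derives it from the fact that $B(0)-B(t)$ is a Kullback--Leibler divergence and hence attains its minimum $0$ at $t=0$ --- the paper's argument has the small advantage of not using the symmetry of the noise. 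Both are valid under the stated assumptions.
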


\begin{proof}
Assume without loss of generality that $\theta^* = 0$.
Recall the notation used for the joint density of $(X,Y)$ set in \eqref{joint_density}. By \cite[Th 2.1 and Th 2.2]{tsybakov2009introduction}, it suffices to prove that ${\sf KL}(p_{\theta}, p_{0}) \lesssim 1/n$ when $\sqrt{n}|\theta| \lesssim 1$.
Here ${\sf KL}$ denotes the Kullback--Leibler divergence.
We have
\begin{align}
{\sf KL}(p_{\theta}, p_{0})
= A(\theta) - A(0),
\end{align}
where
\begin{align}
A(\theta) 
&:= \E[-\log p_{\theta}(X, Y)] \\
&= - \int B(f(x-\theta) - f(x)) \lambda(x) \d x.
\end{align}
Note that $|f(x-\theta) - f(x)| \le |f'|_\infty |\theta|$ and that $\theta$ is taken small. Hence, since $B$ is continuously differentiably in a neighborhood of the origin and $f$ is almost surely differentiable, $\theta \mapsto B(f(x-\theta) - f(x)) \lambda(x)$ is almost surely differentiable near the origin. Its derivative is 
\begin{equation}
f'(x-\theta) B'(f(x-\theta) - f(x)) \lambda(x),
\end{equation}
which is bounded since $f'$ and $\lambda$ are.
Also, $B'$ is continuous and therefore locally bounded.
Hence, by dominated convergence, $A$ is differentiable with
\begin{align}
A'(\theta)
&= \int f'(x-\theta) B'(f(x-\theta) - f(x)) \lambda(x) \d x \\
&= \int f'(x) B'(f(x) - f(x+\theta)) \lambda(x+\theta) \d x,
\end{align}
after a change of variable.
We also have that $\theta \mapsto f'(x) B'(f(x) - f(x+\theta)) \lambda(x+\theta)$ is almost surely differentiable near the origin, with derivative 
\begin{equation}
f'(x) \big\{- f'(x+\theta) B''(f(x) - f(x+\theta)) \lambda(x+\theta) + B'(f(x) - f(x+\theta)) \lambda'(x+\theta)\big\},
\end{equation}
which is bounded for similar reasons.
Hence, by dominated convergence, $A$ is twice differentiable with bounded second derivative.
Note that $A'(0) = B'(0) \int f'(x) \lambda(x) \d x$ with $B'(0) = 0$ since $B$ is differentiable and attains its maximum at $0$, the latter because $B(0) - B(t) = \int \phi(y) \log (\phi(y)/\phi(y-t)) \d y$ is the Kullback--Leibler divergence from $\phi(\cdot-t)$ to $\phi$. 
We thus obtain 
\begin{equation}
A(\theta) - A(0) \le \tfrac12 |A''|_\infty \theta^2.
\end{equation}
This in turn implies that ${\sf KL}(p_{\theta}, p_{0}) \le C \theta^2$ for some $C > 0$. And $C \theta^2 \le 1/n$ when $\sqrt{n} |\theta| \le 1/\sqrt{C}$.
\end{proof}

Hence, when the model is smooth, the minimax rate of convergence is also $\sqrt{n}$. And this rate is achieved by the M-estimator under essentially the same conditions.

\begin{cor}[Minimaxity]
Assume that the conditions of \thmref{normal} or \thmref{normal_L1} hold, as well as those of \lemref{information}. Then the M-estimator achieves the minimax convergence rate.
\end{cor}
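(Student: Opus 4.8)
The statement is an order-matching result: \lemref{information} already supplies the lower bound $\sqrt n\, R_n^* \gtrsim 1$, so the plan is to complement it with an upper bound showing that the M-estimator attains risk of order $1/\sqrt n$, thereby pinning the minimax rate at $1/\sqrt n$ and exhibiting the M-estimator as rate optimal. Concretely, it suffices to prove that $\sqrt n\,\E_\theta[|\hat\theta_n - \theta|] \lesssim 1$, uniformly in $n$ and in $\theta$ over the region where the model is non-degenerate.

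The rate itself is immediate from \secref{normal}. Taking the true value to be $\theta$ in \thmref{normal} (or \thmref{normal_L1}), $\sqrt n(\hat\theta_n - \theta)$ is asymptotically $N(0, \tau^2(\theta))$ under $\theta$, with $\tau^2(\theta) = C_{\phi,\loss}/\E_\theta[f'(X-\theta)^2]$; in particular $\sqrt n(\hat\theta_n - \theta)$ is tight under $\P_\theta$, so the estimator is $\sqrt n$-consistent, which already matches, in probability, the $1/\sqrt n$ rate dictated by the lower bound.

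To convert this into the expected absolute error defining $\risk$, and to make it uniform in $\theta$, I would replace the distributional limit by a quantitative rate bound built from two ingredients assembled earlier. First, the curvature computation in the proof of \thmref{normal} gives $M''(\theta) = \E_\theta[f'(X-\theta)^2]\,\E[\loss''(Z)]$, hence a quadratic lower bound $M(\vartheta) - M(\theta) \gtrsim (\vartheta - \theta)^2$ whose constant is uniform over any compact set of $\theta$ on which $\E_\theta[f'(X-\theta)^2]$ stays bounded away from $0$. Second, the Lipschitz envelope $\overline m$ of the class $\{m_\theta\}$ displayed in the proofs of \thmref{normal} and \thmref{normal_L1} (which has finite second moment) furnishes a maximal inequality for $\widehat M_n - M$ with the correct modulus of continuity. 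Feeding these into a Wald-type rate argument (\cite[Th 5.52]{van2000asymptotic}) and peeling produces a tail bound $\P_\theta(\sqrt n\,|\hat\theta_n - \theta| > t) \lesssim t^{-2}$ for large $t$, uniform in $n$ and in $\theta$ over such a compact set; integrating the tail yields the uniform first-moment bound $\sqrt n\,\E_\theta[|\hat\theta_n - \theta|] \lesssim 1$. Combined with \lemref{information}, this gives $\sqrt n\, R_n^* \asymp 1$.

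The main obstacle is exactly this passage from a pointwise in-distribution limit to a uniform moment bound. Convergence in distribution alone controls no expectation, and the compactness of the parameter space yields only the trivial bound $\risk \le 4A$, so the uniform tail estimate is genuinely needed. The delicate point within it is that the information $\E_\theta[f'(X-\theta)^2]$ degenerates to $0$ at the extremes of the effective parameter space $[-2A,2A]$, where the model collapses to $Y_i = Z_i$; there the quadratic curvature vanishes and no $\sqrt n$-rate can hold. This does not affect the conclusion, which concerns the $1/\sqrt n$ rate on the regular (identifiable) part of the parameter space where the lower bound of \lemref{information} operates, but it is the place where the uniformity must be stated with care.
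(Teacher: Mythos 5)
Your proposal is correct and takes essentially the same route as the paper: the lower bound comes from \lemref{information}, and the upper bound is a uniform-in-$\theta$ moment bound $\E_\theta[\sqrt{n}\,|\hat\theta_n - \theta|] = O(1)$ obtained not from the normal limit (which, as you rightly stress, controls no expectation) but by adapting the peeling/rate argument of \cite[Th 5.52]{van2000asymptotic} --- the paper invokes \cite[Cor 5.53]{van2000asymptotic}, which rests on exactly that theorem, and likewise notes in a footnote that its in-probability conclusion must be upgraded to an expectation bound. Your closing caveat about the degeneracy of $\E_\theta[f'(X-\theta)^2]$ at the edges of the effective parameter space is a genuine subtlety that the paper passes over silently.
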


\begin{proof}
It turns out that under the conditions of \thmref{normal} or \thmref{normal_L1}, the M-estimator satisfies \cite[Cor 5.53]{van2000asymptotic}\footnote{ The statement of \cite[Cor 5.53]{van2000asymptotic} says that $\sqrt{n} (\hat\theta_n - \theta)$ is bounded in probability under $p_\theta$, but this is done via proving that the conditions of Th 5.52 are fulfilled, and although Th 5.52 provides a bound in probability, a simple modification of the arguments underlying this result yield a bound in expectation under the same exact conditions. That bound in expectation gives \eqref{moment_bound} in the present context.} 
\begin{equation}
\label{moment_bound}
\E_{\theta}[\sqrt{n} |\hat\theta_n - \theta|] = O(1).
\end{equation}
(Note that this is {\em not} a consequence of the normal limit established in \thmref{normal} or \thmref{normal_L1}.)
In our case, we can bound the expectation in \eqref{moment_bound} uniformly over $\theta$. 
This follows trivially from the arguments provided in the proof of \cite[Cor 5.53]{van2000asymptotic} and further details are omitted.
Hence, $\sup_\theta \risk(\hat\theta_n, \theta) = O(1/\sqrt{n})$, and in particular, $\hat\theta_n$ achieves the minimax convergence rate established in \lemref{information}. 
\end{proof}

\section{Non-smooth setting}
\label{sec:non-smooth}

When $f$ is not Lipschitz, the situation can be nonstandard. We saw in \thmref{consistent} that the M-estimator remains consistent under the basic assumptions, but it turns out that it can have a consistency rate other than $\sqrt{n}$. 
Because nonstandard, the analysis is a little bit more involved, yet as it turns out all the tools we need are again available in \citep{van2000asymptotic}.
The model is no longer smooth and we do not go into questions of local asymptotic minimaxity or relative efficiency. However, it turns out that the M-estimator remains rate-minimax.

Although other situations may be of interest, for the sake of concreteness we will consider two cases (and some sub-cases).
In the first case, we take $f$ to be $\alpha$-Holder for some $0 < \alpha \le 1$, meaning 
\begin{equation}
\label{Holder}
H := \sup_{x_1, x_2} \frac{|f(x_1) - f(x_2)|}{|x_1 - x_2|^\alpha} < \infty.
\end{equation}
When this holds for $\alpha = 1$ the function is simply Lipschitz.
In the second case, we take $f$ to be piecewise $\alpha$-Holder. We will let $\cD$ denote the discontinuity set of $f$ which, remember, we are assuming is finite.

Everywhere, the basic assumptions are in place.

\subsection{Preliminaries}
We start with some preliminary results from which everything else follows. The following function will play an important role
\begin{equation}
\label{Delta}
\Delta(\theta_1, \theta_2) = \int (f(x-\theta_1) - f(x-\theta_2))^2 \lambda(x) \d x.
\end{equation}
We will also use the following notation
\begin{equation}
\label{m_tilde}
\tilde m_\theta(x, z) = \loss(z + f(x-\theta^*) - f(x-\theta)),
\end{equation}
which is useful because $m_\theta(x,y) = \tilde m_\theta(x,y-f(x-\theta^*))$.

The first result is an expansion of the function $M$ of \eqref{M}.

\begin{lem}
\label{lem:Delta}
If $\loss$ has a bounded and almost everywhere continuous second derivative, then
\begin{equation}
\label{M_expansion}
M(\theta) = M(\theta^*) + A(\theta) \Delta(\theta, \theta^*),
\end{equation}
where $A$ is bounded and satisfies $A(\theta) \to C_0 := \tfrac12 \int L'' \phi$ as $\theta \to \theta^*$.
If $\loss$ is the absolute-value loss, and $\phi$ is locally bounded and continuous at 0, then this continues to holds, except that $C_0 := \phi(0)$.
\end{lem}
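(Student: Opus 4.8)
The plan is to collapse the two-dimensional expectation defining $M$ to a one-dimensional one and then Taylor-expand in the size of the template mismatch. Working under $\theta^*$, so that $Y - f(X-\theta) = Z + \delta_\theta(X)$ with $\delta_\theta(x) := f(x-\theta^*) - f(x-\theta)$, and using that $Z$ and $X$ are independent, I would write
\[
M(\theta) - M(\theta^*) = \int \lambda(x)\, h(\delta_\theta(x))\, \d x, \qquad h(\delta) := \int \big(\loss(z+\delta) - \loss(z)\big)\phi(z)\,\d z,
\]
while $\Delta(\theta,\theta^*) = \int \lambda(x)\,\delta_\theta(x)^2\,\d x$ by definition. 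The whole lemma then reduces to understanding the scalar function $h$ near and away from the origin.

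The crux is to show that $h(\delta) = a(\delta)\,\delta^2$ for a bounded factor $a$ with $a(\delta) \to C_0$ as $\delta \to 0$. For a loss with bounded, almost-everywhere-continuous second derivative I would use the exact Taylor expansion $\loss(z+\delta) - \loss(z) = \loss'(z)\delta + \delta^2\int_0^1(1-s)\loss''(z+s\delta)\,\d s$. Integrating against $\phi$, the linear term drops out because $\loss'$ is odd (as $\loss$ is even) while $\phi$ is even, so $\int \loss'(z)\phi(z)\,\d z = 0$; this use of \aspref{noise} is the single structural ingredient that makes the expansion clean. This leaves $a(\delta) = \int \Big(\int_0^1 (1-s)\loss''(z+s\delta)\,\d s\Big)\phi(z)\,\d z$, which is bounded by $\tfrac12|\loss''|_\infty$ and, by dominated convergence (using that $\loss''$ is bounded and continuous almost everywhere, so $\loss''(z+s\delta)\to\loss''(z)$ for a.e.\ $z$), converges to $\tfrac12\int\loss''\phi = C_0$ as $\delta\to0$.

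I would then set $A(\theta) := \Delta(\theta,\theta^*)^{-1}\int \lambda\,\delta_\theta^2\, a(\delta_\theta)$, so that \eqref{M_expansion} holds by construction and $A$ inherits the bound on $a$, being a weighted average of values of $a$. For the limit, writing $A(\theta) - C_0$ as the same weighted average of $a(\delta_\theta(x)) - C_0$ gives $|A(\theta) - C_0| \le \sup_x |a(\delta_\theta(x)) - C_0|$. When $f$ is continuous, hence (being compactly supported) uniformly continuous, $\delta_\theta \to 0$ uniformly as $\theta \to \theta^*$, and since $a(\delta)\to C_0$ as $\delta\to0$ the right-hand side vanishes. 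I expect the main obstacle to be exactly this limit in the presence of discontinuities: near each jump of $f$ the mismatch $\delta_\theta$ stays of the order of the jump size on an interval of width $|\theta-\theta^*|$, and such a region contributes a term of order $|\theta-\theta^*|$ to both $\int \lambda\,\delta_\theta^2 a(\delta_\theta)$ and $\Delta(\theta,\theta^*)$, so it does not wash out in the ratio. The limit $A(\theta)\to C_0$ is therefore transparent for continuous (Hölder) templates but requires these jump contributions to be controlled in the genuinely discontinuous case, where the boundedness of $A$ (together with a matching lower bound from a lower bound on $a$) is what one actually carries forward.

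Finally, for the absolute-value loss $\loss''$ does not exist and I would compute $h$ directly. Writing $|z+\delta| - |z| = \int_0^\delta \sign(z+t)\,\d t$ and applying Fubini gives $h(\delta) = \int_0^\delta (1 - 2\Phi(-t))\,\d t$, where $\Phi(z) = \int_{-\infty}^z \phi$. Hence $h(0)=0$ and $h'(0) = 1 - 2\Phi(0) = 0$ (using $\Phi(0)=\tfrac12$ by symmetry), while $h''(\delta) = 2\phi(\delta)$; so, when $\phi$ is continuous at $0$, $a(\delta) := h(\delta)/\delta^2 \to \phi(0) = C_0$. Boundedness of $a$ over the compact range of $\delta_\theta$ follows from the bound $|h(\delta)| \le |\delta|$ away from the origin together with the finite limit at the origin. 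With $h(\delta) = a(\delta)\,\delta^2$ again in hand, the argument concludes exactly as in the smooth case.
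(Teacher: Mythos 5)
Your argument is correct and follows essentially the same route as the paper's proof: Taylor-expand the loss in the template mismatch $\delta_\theta(x)$, kill the linear term using that $\loss'$ is odd and $\phi$ is even (\aspref{noise}), and identify the quadratic coefficient by dominated convergence; for the absolute-value loss, your computation $h'(\delta)=1-2\Phi(-\delta)$, $h''=2\phi$ is the same calculation the paper performs by Taylor-expanding $\Phi_2$ (whose second derivative is $\phi$). Two of your choices are slightly cleaner than the paper's: the integral form of the remainder replaces the paper's Lagrange form plus the modulus $\gamma(z,a):=\sup_{|b|\le a}|\loss''(z+b)-\loss''(z)|$, and defining $A(\theta)$ as a weighted average of $a(\delta_\theta(x))$ delivers the global boundedness of $A$ directly from the boundedness of $a$, a point the paper leaves implicit.

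Your caveat about discontinuous templates is not a defect of your proof --- it pinpoints a real gap in the lemma as stated and in the paper's own argument. The paper claims the remainder $\int\!\!\int \gamma(z,|\delta_\theta(x)|)\,\delta_\theta(x)^2\,\phi(z)\lambda(x)\,\d z\,\d x$ is $o(\Delta(\theta,\theta^*))$ ``by dominated convergence,'' but that step needs $\delta_\theta\to 0$ on the set carrying the weight $\lambda\,\delta_\theta^2/\Delta$. When $f$ has a jump of size $\delta_d$ at $d$, this weight concentrates on an interval of width $\asymp|\theta-\theta^*|$ on which $\delta_\theta\approx\delta_d$ stays fixed, and, exactly as you say, that region contributes at order $|\theta-\theta^*|$ to both $M(\theta)-M(\theta^*)$ and $\Delta(\theta,\theta^*)$ (cf.\ \eqref{Delta_discont}); the ratio then tends to a weighted average of $h(\delta_d)/\delta_d^2$ rather than $C_0$, and these coincide only for quadratic loss. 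This is consistent with the paper's own \thmref{limit_discont}, where the drift of the limit process involves $\E[\loss(Z+\delta_d)-\loss(Z)]$ and not $C_0\,\delta_d^2$. What the downstream results actually require of \lemref{Delta} in the discontinuous case (\thmref{rate}) is only that $A$ be bounded above and bounded away from zero near $\theta^*$, which your weighted-average representation supplies from two-sided bounds on $a$; the full convergence $A(\theta)\to C_0$ is invoked only for H\"older templates (\thmref{limit_holder}), where your uniform-continuity argument proves it. So your proposal establishes everything that is both true and needed, and correctly isolates the part of the stated claim that does not survive discontinuities.
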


\begin{proof}
Assume $\theta^* = 0$ without loss of generality and let $\Delta(\theta)$ be short for $\Delta(\theta, \theta^*)$.
We start with the first situation. 
Motivated by the fact that $f(x) - f(x-\theta)$ tends to be small when $\theta$ is small, we derive
\begin{align}
\label{m_order_2}
\tilde m_\theta(x, z)
&= \loss(z + f(x) - f(x-\theta)) \\
&= \loss(z) + \loss'(z) (f(x) - f(x-\theta)) + \tfrac12 \loss''(z + \zeta(z,x,\theta)) (f(x) - f(x-\theta))^2,
\end{align}
for some $|\zeta(z,x,\theta)| \le |f(x) - f(x-\theta)|$.
Integrating over $x$ and $z$, we obtain
\begin{align*}
M(\theta)
&= M(0) + \int \int_\bbR \tfrac12 \loss''(z + \zeta(z,x,\theta)) (f(x) - f(x-\theta))^2 \phi(z) \lambda(x) \d z \d x \\
&= M(0) + C_0 \Delta(\theta) 
\pm \int \int_\bbR \gamma(z, |f(x)-f(x-\theta)|) (f(x) - f(x-\theta))^2 \phi(z) \lambda(x) \d z \d x,
\end{align*}
where we have used the fact that $\int_\bbR \loss'(z) \phi(z) \d z = 0$, and where we used the notation
\begin{equation}
\label{gamma}
\gamma(z, a) := \sup_{|b| \le a} |\loss''(z+b) - \loss''(z)|.
\end{equation}
By our assumptions on $\loss''$, $\gamma(z,a) \to 0$ when $a \to 0$ for almost every $z$. In addition, $\gamma$ is uniformly bounded. Hence, by dominated convergence, the last integral is $o(\Delta(\theta))$ as $\theta \to 0$.

We now turn to the situation where $\loss$ is the absolute-value loss. We first note that
\begin{equation}
\int_\bbR |z+a| \phi(z) \d z
= 2 \Phi_2(z) + a,
\end{equation}
where
\begin{equation}
\Phi_2(z) := \int_{-\infty}^z \Phi(u) \d u, \quad
\Phi(z) := \int_{-\infty}^z \phi(u) \d u.
\end{equation}
Note that $\Phi_2' = \Phi$ and $\Phi_2'' = \phi$.
Using this, we derive
\begin{align*}
M(\theta)
&= M(0) 
+ \int \big\{2 \big[\Phi_2(f(x) - f(x-\theta)) - \Phi_2(0)\big] + f(x)-f(x-\theta)\big\} \d x \\
&= M(0) + C_0 \Delta(\theta)
+ \int \phi(\zeta(x,\theta)) (f(x) - f(x-\theta))^2 \lambda(x) \d x,
\end{align*}
for some $|\zeta(x,\theta)| \le |f(x) - f(x-\theta)|$.
Note that $|\zeta(x,\theta)| \le 2 |f|_\infty$, so that $\phi(\zeta(x,\theta))$ is bounded. Moreover, by the fact that $\phi$ is continuous at 0, and that $f$ is continuous almost everywhere, it holds that $\phi(\zeta(x,\theta)) \to 0$ as $\theta \to 0$ for almost all $x$. We can thus apply dominated convergence to find that the last integral is $o(\Delta(\theta))$ as $\theta \to 0$.
\end{proof}

Another notion that will be important is that of an envelope. We say that a function $\env$ is an envelope for a function class $\cM$ if any function in $\cM$ is, in absolute value, pointwise bounded by $\env$, or in formula, $|g(x)| \le V(x)$ for all $x$ and all $g \in \cM$.
Below,
\begin{equation}
\label{M_delta}
\cM_\delta := \big\{m_\theta - m_{\theta^*} : |\theta - \theta^*| \le \delta\big\},
\end{equation}
and
\begin{equation}
\label{D_delta}
\cD(\delta) := \{x = d + t: d \in \cD, |t| \le \delta\},
\end{equation}
where $\delta$ is thought of as being small.

\begin{lem}
\label{lem:envelope}
Assume either that $\loss$ has a Lipschitz derivative and that the noise distribution has finite second moment; or that $\loss$ itself is Lipschitz.
If $f$ is $\alpha$-Holder, then $\cM_\delta$ admits an envelope of the form $\env(x,y) := \overline{m}(z) \delta^\alpha$ where $z := y - f(x-\theta^*)$ and $\int \overline{m}^2 \phi < \infty$.
If $f$ is only piecewise $\alpha$-Holder, then the same is true but with $\env(x,y) := \overline{m}(z) (\delta^\alpha + \IND{x \in \cD(\delta)})$.
\end{lem}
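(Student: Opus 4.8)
The plan is to bound the increment $m_\theta - m_{\theta^*}$ pointwise as $\theta$ ranges over $|\theta - \theta^*| \le \delta$, separating the analysis by the regularity of $\loss$ and exploiting the $\alpha$-Hölder (or piecewise-Hölder) structure of $f$. Throughout I write $z := y - f(x-\theta^*)$, so that by the identity $m_\theta(x,y) = \tilde m_\theta(x, y - f(x-\theta^*))$ recorded just before \eqref{m_tilde}, I have $m_\theta(x,y) - m_{\theta^*}(x,y) = \loss\big(z + f(x-\theta^*) - f(x-\theta)\big) - \loss(z)$. The key quantity controlling the increment is thus $f(x-\theta^*) - f(x-\theta)$, and the whole argument reduces to bounding $\loss(z + u) - \loss(z)$ in terms of $u$ and then bounding $|u| = |f(x-\theta^*) - f(x-\theta)|$ using the Hölder hypothesis.

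\emph{The Lipschitz-loss case.} When $\loss$ is itself Lipschitz, I would simply write $|\loss(z+u) - \loss(z)| \le |\loss'|_\infty |u|$, so the increment is at most $|\loss'|_\infty \,|f(x-\theta^*) - f(x-\theta)|$. \emph{The smooth-loss case.} When instead $\loss$ has a Lipschitz derivative, a first-order Taylor expansion with remainder gives $|\loss(z+u) - \loss(z)| \le |\loss'(z)|\,|u| + \tfrac12 |\loss''|_\infty u^2$, and using $\loss'(0)=0$ together with the Lipschitz bound $|\loss'(z)| \le |\loss''|_\infty |z|$ (as already invoked in the proof of \thmref{normal}) this is at most $|\loss''|_\infty\big(|z| + \tfrac12 |u|\big)|u|$. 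Since $|u| \le 2|f|_\infty$ is bounded, I can absorb the $|u|$ factor and write this as $\overline{m}(z)\,|u|$ with $\overline{m}(z) = C(|z| + 1)$ for a suitable constant; the finite-second-moment assumption on the noise then gives $\int \overline{m}^2 \phi < \infty$. In both cases the pointwise increment factors as $\overline{m}(z)\cdot |f(x-\theta^*) - f(x-\theta)|$ with $\int \overline{m}^2 \phi < \infty$, so it remains only to control the spatial factor $|f(x-\theta^*) - f(x-\theta)|$ uniformly over $|\theta - \theta^*| \le \delta$.

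\emph{Bounding the spatial factor.} If $f$ is (globally) $\alpha$-Hölder with constant $H$ as in \eqref{Holder}, then $|f(x-\theta^*) - f(x-\theta)| \le H |\theta - \theta^*|^\alpha \le H \delta^\alpha$ for every $x$. Combining this with the pointwise loss bound produces the envelope $\env(x,y) = \overline{m}(z)\,\delta^\alpha$ (after folding $H$ and the loss constants into $\overline{m}$), which is exactly the first claim. When $f$ is only piecewise $\alpha$-Hölder with discontinuity set $\cD$, the Hölder bound $|f(x-\theta^*)-f(x-\theta)| \le H\delta^\alpha$ still holds for those $x$ such that the segment between $x-\theta$ and $x-\theta^*$ contains no discontinuity; for such $x$ both points lie in a common Hölder piece. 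The remaining $x$ are precisely those for which some discontinuity $d \in \cD$ lies between $x-\theta$ and $x-\theta^*$, forcing $x - \theta^* \in [d, d+ |\theta-\theta^*|]$ up to sign, i.e. $x \in \cD(\delta)$ as defined in \eqref{D_delta} (using $\theta^* = 0$ as everywhere, or just $|\theta-\theta^*|\le\delta$). On this bad set I cannot use Hölder continuity, but there $|f(x-\theta^*) - f(x-\theta)| \le 2|f|_\infty$ is still bounded by a constant, which I absorb into $\overline{m}$ to produce the indicator term $\IND{x \in \cD(\delta)}$. Splitting by whether $x \in \cD(\delta)$ therefore yields the envelope $\env(x,y) = \overline{m}(z)\big(\delta^\alpha + \IND{x \in \cD(\delta)}\big)$.

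\emph{Main obstacle.} The substantive content is the bookkeeping in the piecewise case: I must argue cleanly that if $x \notin \cD(\delta)$ then $x-\theta$ and $x-\theta^*$ genuinely lie in the same Hölder piece of $f$ for \emph{every} $\theta$ with $|\theta - \theta^*| \le \delta$ (so that the Hölder bound applies uniformly in $\theta$), and conversely identify the complementary bad set as a subset of $\{x : x - \theta^* \text{ is within } \delta \text{ of some } d \in \cD\}$, which matches $\cD(\delta)$ after accounting for the shift by $\theta^*$. This is a matter of carefully tracking which translate of $\cD$ the condition lands on; once the geometry is pinned down, combining it with the pointwise loss bounds from the two loss cases is routine. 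The absolute-value loss is covered by the Lipschitz-loss branch (with $|\loss'|_\infty = 1$), so it requires no separate treatment here.
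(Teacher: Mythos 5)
Your proposal is correct and follows essentially the same route as the paper's proof: reduce the increment to $\loss(z+u)-\loss(z)$ with $u = f(x-\theta^*)-f(x-\theta)$, bound it by $\overline{m}(z)\,|u|$ (via the Lipschitz constant of $\loss$, or via a first-order expansion plus $|\loss'(z)|\le|\loss''|_\infty|z|$ and $|u|\le 2|f|_\infty$ in the smooth case — the paper uses the mean value theorem where you use Taylor with remainder, an immaterial difference), and then control $|u|$ by $H\delta^\alpha$ in the Hölder case or by splitting on whether a discontinuity lies between $x-\theta$ and $x-\theta^*$ in the piecewise case, exactly as the paper does. The geometric identification of the bad set with $\cD(\delta)$ and the absorption of $2|f|_\infty$ into $\overline{m}$ also match the paper's argument, so there is nothing to add.
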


\begin{proof}
Assume $\theta^* = 0$ without loss of generality, so that $\theta$ below satisfies $|\theta|\le \delta$.
For $x, y$, we let $z = y - f(x)$.

{\em First, assume that $f$ is $\alpha$-Holder} and let $H$ be defined as in \eqref{Holder}.
If the loss has a Lipschitz derivative, we have
\begin{align*}
|m_\theta(x,y) - m_0(x,y)|
&= |\loss(z+f(x)-f(x-\theta)) - \loss(z)| \\
&\le |\loss'(z + \zeta)| \times |f(x) - f(x-\theta)| \\
&\le |\loss''|_\infty (|z|+2|f|_\infty) \times H |\theta|^\alpha,
\end{align*}
for some $|\zeta| \le |f(x) - f(x-\theta)| \le 2|f|_\infty$.
We then conclude with $|\theta| \le \delta$.
If the loss is Lipshitz, we have
\begin{align*}
|m_\theta(x,y) - m_0(x,y)|
&\le |L'|_\infty |f(x) - f(x-\theta)| \\
&\le |L'|_\infty H |\theta|^\alpha,
\end{align*}
and we conclude in the same way.

{\em Next, assume that $f$ is piecewise $\alpha$-Holder.}
In this situation, let $H$ be defined as in \eqref{Holder}, except away from discontinuities.
Based on what we just did, it suffices to show that 
\begin{equation}
|f(x) - f(x-\theta)| \le C_1(\delta^\alpha + \IND{x \in \cD(\delta)}),
\end{equation}
when $|\theta| \le \delta$ for some constant $C_1$. 
Indeed, either there are no discontinuity point between $x$ and $x-\theta$, in which case $|f(x) - f(x-\theta)| \le H |\theta|^\alpha$; or there is a discontinuity point, say $d$, between $x$ and $x-\theta$, so that $|x-d| \le |\theta| \le \delta$, implying that $|f(x) - f(x-\theta)| \le 2 |f|_\infty \IND{x \in \cD(\delta)}$.
\end{proof}

The next result is a complexity bound for $\cM_\delta$.
The complexity is in terms of bracketing numbers \cite[Sec 19.2]{van2000asymptotic}.
Two functions $g_1, g_2$ such that $g_1 \le g_2$ pointwise define a bracket made of all functions $g$ such that $g_1 \le g \le g_2$. It is said to be an $\eps$-bracket with respect to $L^2(\mu)$, for a positive measure $\mu$, if $\int (g_2 - g_1)^2 \d\mu \le \eps^2$. 
Given a class of functions $\cM$, its $\eps$-bracketing number with respect to $L^2(\mu)$ is the minimum number of $\eps$-brackets needed to cover $\cM$ (meaning to include any function in that class).
In our context, the measure is the underlying sample distribution, meaning $p_{\theta*}$ in the notation introduced in \eqref{joint_density}. We let $N_\delta(\eps)$ denote the $\eps$-bracketing number of $\cM_\delta$ with respect to $L^2(p_{\theta*})$. 

\begin{lem}
\label{lem:bracketing}
There is a constant $C>0$ such that the following holds. Assume the loss is as in \lemref{envelope}.
If $f$ is $\alpha$-Holder, then $N_\delta(\eps) \le C \delta \eps^{-1/\alpha}$. 
If $f$ is piecewise $\alpha$-Holder, and $\lambda$ is bounded, then $N_\delta(\eps) \le C \delta \eps^{-1/(\alpha \wedge 1/2)}$. 
\end{lem}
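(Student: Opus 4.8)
The plan is to exploit that $\cM_\delta$ is indexed by a single real parameter $\theta$ ranging over $[\theta^*-\delta,\theta^*+\delta]$, and to build one bracket per piece of a partition of this interval. Assume $\theta^*=0$ and write $z=y-f(x-\theta^*)$, so that under $p_{\theta^*}$ the variable $z$ has density $\phi$ and is independent of $x\sim\lambda$. I would partition $[-\delta,\delta]$ into $N$ subintervals $I_j=[\theta_j,\theta_{j+1}]$ of common length $\eta$, and for each $j$ set
\[
e_j(x,y):=\sup_{\theta\in I_j}|m_\theta(x,y)-m_{\theta_j}(x,y)|,\qquad g_j^{\pm}:=(m_{\theta_j}-m_{\theta^*})\pm e_j.
\]
Since $\theta\mapsto m_\theta(x,y)$ has one-sided limits (as $f$ is c\`adl\`ag), the supremum may be restricted to a countable dense subset of $I_j$, ensuring measurability. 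For $\theta\in I_j$ one has $g_j^-\le m_\theta-m_{\theta^*}\le g_j^+$, so these $N$ brackets cover $\cM_\delta$; and since $g_j^+-g_j^-=2e_j$, the whole problem reduces to bounding $\|e_j\|_{L^2(p_{\theta^*})}$ and then choosing $\eta$ so that this is at most $\eps/2$.

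Next I would reduce $\|e_j\|$ to an integral of the oscillation of $f$. Exactly as in the proof of \lemref{envelope}, the Lipschitz bound on $\loss$ (or on $\loss'$, using $\loss'(0)=0$ and the finite second moment of the noise) gives $e_j(x,y)\le\overline{m}(z)\,w_j(x)$, where $\int\overline{m}^2\phi<\infty$ and
\[
w_j(x):=\sup_{\theta\in I_j}|f(x-\theta)-f(x-\theta_j)|.
\]
By the independence of $z$ and $x$ noted above, this factorizes, $\|e_j\|_{L^2(p_{\theta^*})}^2\le\big(\int\overline{m}^2\phi\big)\int w_j(x)^2\lambda(x)\,\d x$, so it remains to control $\int w_j^2\lambda$, a kind of $\sup$-over-$I_j$ version of $\Delta$.

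Third comes the main estimate, where the two cases diverge. If $f$ is $\alpha$-Holder then $w_j(x)\le H\eta^\alpha$ for every $x$, whence $\int w_j^2\lambda\lesssim\eta^{2\alpha}$. If $f$ is only piecewise $\alpha$-Holder, I split the domain: if the segment $\{x-\theta:\theta\in I_j\}$ contains no point of $\cD$ then $f$ is $\alpha$-Holder across it and again $w_j(x)\le H\eta^\alpha$; otherwise $x$ lies in the ``bad set'' $\bigcup_{d\in\cD}(d+I_j)$, where only $w_j(x)\le 2|f|_\infty$ is available. The bad set has Lebesgue measure at most $|\cD|\,\eta$, and since $\lambda$ is bounded it carries $\lambda$-mass $\lesssim\eta$; hence $\int w_j^2\lambda\lesssim\eta^{2\alpha}+\eta\lesssim\eta^{2(\alpha\wedge 1/2)}$ for small $\eta$. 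This is exactly where the hypothesis that $\lambda$ is bounded is used, and it is what caps the effective exponent at $1/2$: the jump contributes an indicator-type term whose $L^2$ size scales like $\eta^{1/2}$ rather than $\eta^\alpha$.

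Finally, writing $\beta:=\alpha\wedge 1/2$ (resp. $\beta:=\alpha$ in the continuous case), the previous two steps give $\|e_j\|_{L^2(p_{\theta^*})}\lesssim\eta^\beta$ uniformly in $j$. Choosing $\eta\asymp\eps^{1/\beta}$ makes every bracket of $L^2(p_{\theta^*})$-size at most $\eps$, and covering $[-\delta,\delta]$ then requires $N\asymp\delta/\eta\asymp\delta\,\eps^{-1/\beta}$ subintervals, which is the claimed bound (the integer ceiling being harmless in the relevant regime $\delta\eps^{-1/\beta}\ge 1$). I expect the main obstacle to be the discontinuity bookkeeping in the third step: one must check, uniformly over the partition, that the bad set near $\cD$ has measure $O(\eta)$ and that $\lambda$ bounded converts this into an $O(\eta)$ contribution, so that the continuous Holder term $\eta^{2\alpha}$ and the jump term $\eta$ combine into the single exponent $2(\alpha\wedge 1/2)$.
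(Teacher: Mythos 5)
Your proposal is correct and takes essentially the same approach as the paper: partition the parameter interval into $O(\delta\,\eps^{-1/(\alpha\wedge 1/2)})$ (resp.\ $O(\delta\,\eps^{-1/\alpha})$) subintervals, form one bracket per subinterval from the envelope-type bound of \lemref{envelope}, and use the boundedness of $\lambda$ so that the indicator term created by each discontinuity contributes $L^2$ size of order $\eta^{1/2}$. The only difference is cosmetic: you take the exact oscillation $e_j$ as the bracket half-width (with a measurability remark), whereas the paper uses the explicit dominating function $\overline{m}(z)\big((\delta/k)^\alpha+\IND{\theta_j\le x-d\le\theta_{j+1}}\big)$.
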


\begin{proof}
Assume $\theta^* = 0$ without loss of generality, so that any value of the parameter below is in $[-\delta, \delta]$.
Let $\theta_j = j \delta/k$ for $j = -k, \dots, k$.
For $x, y$, we let $z = y - f(x)$ below.
We rely on the proof of \lemref{envelope}.

First, assume that $f$ is $\alpha$-Holder.
We have
\begin{align*}
|m_\theta(x,y) - m_{\theta_0}(x,y)|
&\le \overline{m}(z) |\theta - \theta_0|^\alpha,
\end{align*}
where $\overline{m}(z)$ is square integrable.
For a given $\theta$, let $j$ be such that $|\theta - \theta_j| \le \delta/k$. Then
\begin{align*}
|m_\theta(x,y) - m_{\theta_j}(x,y)|
&\le \overline{m}(z) |\theta - \theta_j|^\alpha
\le \overline{m}(z) (\delta/k)^\alpha,
\end{align*}
implying that
\begin{equation}
A_j(x,y) \le m_\theta(x,y) \le B_j(x,y),
\end{equation}
where
\begin{align*}
A_j(x,y) 
&:= 
m_{\theta_j}(x,y) - \overline{m}(z) (\delta/k)^\alpha \\
B_j(x,y)
&:= m_{\theta_j}(x,y) + \overline{m}(z) (\delta/k)^\alpha.
\end{align*}
Note that $B_j-A_j$ has $L^2$ norm of order $(\delta/k)^\alpha$ since $\overline{m}$ is square integrable. Choosing $k \ge C_2 \delta/\eps^{1/\alpha}$ for a large enough $C_2$ makes it an $\eps$-bracket. And these $k$ brackets together cover $\cM_\delta$. 

Next, assume that $f$ is piecewise $\alpha$-Holder.
Assume for expediency that $f$ has a single discontinuity, and let $d$ denote the location of that discontinuity.
Following the corresponding arguments in \lemref{envelope}, we can prove that 
\begin{align*}
|m_\theta(x,y) - m_{\theta_0}(x,y)|
&\le \overline{m}(z) (|\theta - \theta_0|^\alpha + \IND{\theta_0 \le x-d \le \theta}),
\end{align*}
where $\overline{m}(z)$ is square integrable.
For a given $\theta$, let $j$ be such that $\theta_j \le \theta \le \theta_{j+1}$. Then
\begin{equation}
A_j(x,y) \le m_\theta(x,y) \le B_j(x,y),
\end{equation}
where
\begin{align*}
A_j(x,y) 
&:= m_{\theta_j}(x,y) - \overline{m}(z) ((\delta/k)^\alpha + \IND{\theta_j \le x-d \le \theta_{j+1}}) \\
B_j(x,y)
&:= m_{\theta_j}(x,y) + \overline{m}(z) ((\delta/k)^\alpha + \IND{\theta_j \le x-d \le \theta_{j+1}}).
\end{align*}
The difference satisfies
\begin{align*}
&\int (B_j(x,y) - A_j(x,y))^2 \phi(y-f(x)) \lambda(x) \d y \d x \\
&\le 4 \int \overline{m}(z)^2 \phi(z) \d z \times 2 \int ((\delta/k)^{2 \alpha} + \IND{\theta_j \le x-d \le \theta_{j+1}}) \lambda(x) \d x \\
&\lesssim (\delta/k)^{2 \alpha} + |\lambda|_\infty (\delta/k),
\end{align*}
and so has $L^2$ norm of order $(\delta/k)^\alpha + (\delta/k)^{1/2} \asymp (\delta/k)^{\alpha \wedge 1/2}$, uniformly in $j$. Therefore, choosing $k \ge C_3 \delta/\eps^{1/(\alpha \wedge 1/2)}$ for a large enough $C_3$ ensures that $B_j-A_j$ has $L^2$ norm bounded by $\eps$. And the $k$ brackets that these pairs of functions define as $j$ ranges through $\{-k, \dots, k\}$, together, cover the space $\cM_\delta$. 
\end{proof}

From here proceed in reverse order compared to \secref{smooth}: We first study the rate of convergence and then discuss the limit distribution.

\subsection{Rate of convergence}

The bracketing integral of a function class $\cM$ with respect to $L^2(\mu)$ is defined as the integral of the square root of the logarithm of the corresponding $\eps$-bracketing number as a function of $\eps$. In particular, we introduce the bracketing integral of $\cM_\delta$, denoted
\begin{equation}
J_\delta(t) := \int_0^t \sqrt{\log N_\delta(\eps)} \d \eps.
\end{equation}

A simple adaptation of the arguments underlying \cite[Th 5.52]{van2000asymptotic}, in combination with \cite[Cor 19.35]{van2000asymptotic}, gives the following result.

\begin{lem}
\label{lem:rate}
Suppose there are constants $a > b \ge 0$ and $C>0$ such that, for $\delta > 0$ small enough, 
\begin{equation}
\label{Delta_cond}
\inf_{|\theta - \theta^*| \ge \delta} M(\theta) - M(\theta^*) \ge \delta^a/C,
\end{equation}
and
\begin{equation}
\label{J_cond}
J_\delta(\infty) \le C \delta^b.
\end{equation}
Then the M-estimator satisfies
\begin{equation}
\label{moment_bound_nonsmooth}
\E\big[n^{\frac1{2(a-b)}} |\hat\theta_n - \theta^*|\big] = O(1).
\end{equation}
\end{lem}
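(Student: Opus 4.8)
The plan is to invoke the general M-estimation rate theory of \cite[Th 5.52]{van2000asymptotic} specialized to our setting, the adaptation being necessary because that theorem produces a bound in probability whereas we want a bound in expectation as in \eqref{moment_bound_nonsmooth}. The two hypotheses \eqref{Delta_cond} and \eqref{J_cond} are precisely the two inputs that such a theorem requires: the first is a curvature (well-separatedness) condition on the population criterion $M$ near $\theta^*$, and the second is a complexity bound on the localized class $\cM_\delta$ through its bracketing integral. The heart of the matter is a peeling argument over dyadic shells of the parameter space, which I describe next.

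First I would recall the key empirical-process ingredient: by \cite[Cor 19.35]{van2000asymptotic}, the expected supremum of the centered empirical process over $\cM_\delta$ is controlled by the bracketing integral,
\begin{equation}
\E\Big[\sup_{|\theta-\theta^*|\le\delta}\sqrt{n}\,\big|(\widehat M_n - M)(\theta) - (\widehat M_n - M)(\theta^*)\big|\Big]
\lesssim J_\delta(\env_\delta),
\end{equation}
where the integral is taken up to the $L^2$-size of the envelope furnished by \lemref{envelope}. Combining \eqref{J_cond} with the envelope bound, this is of order $\delta^b$, so the modulus of continuity function is $\varphi_n(\delta)\asymp \delta^b$ (up to the $\sqrt n$ normalization). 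The rate exponent is then determined by balancing this complexity exponent $b$ against the curvature exponent $a$ from \eqref{Delta_cond}: the optimal radius $\delta_n$ solves $\sqrt{n}\,\delta_n^b \asymp \sqrt{n}\,\delta_n^a$ at the relevant scale, which after the standard bookkeeping in \cite[Th 5.52]{van2000asymptotic} yields the rate $r_n = n^{1/(2(a-b))}$, matching the exponent in \eqref{moment_bound_nonsmooth}.

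Second, to upgrade from a bound in probability to the bound in expectation \eqref{moment_bound_nonsmooth}, I would run the peeling argument directly on $\E[r_n|\hat\theta_n-\theta^*|]$. Partition the event according to which dyadic shell $S_j:=\{2^{j-1}< r_n|\theta-\theta^*|\le 2^j\}$ the estimator falls into, and write $\E[r_n|\hat\theta_n-\theta^*|]\le \sum_j 2^j\,\P(\hat\theta_n\in S_j)$ plus a harmless contribution from $|\hat\theta_n-\theta^*|\lesssim 1/r_n$. On each shell, since $\hat\theta_n$ minimizes $\widehat M_n$, the event $\{\hat\theta_n\in S_j\}$ forces the centered empirical fluctuation over that shell to exceed the curvature gap guaranteed by \eqref{Delta_cond}; a Markov bound applied to the expected supremum from \cite[Cor 19.35]{van2000asymptotic} then gives $\P(\hat\theta_n\in S_j)\lesssim \varphi_n(\delta_j)/(\sqrt n\,\delta_j^a)$ for $\delta_j=2^j/r_n$, and the resulting geometric-type series over $j$ sums to $O(1)$. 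This is exactly the modification alluded to in the statement: the arguments of \cite[Th 5.52]{van2000asymptotic} already control these shell probabilities, and summing $2^j\P(\hat\theta_n\in S_j)$ rather than merely showing the tail is small converts the in-probability conclusion into the moment bound.

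The main obstacle is not any single estimate but ensuring that the peeling series converges, which is precisely where the strict inequality $a>b$ is used: the per-shell probability decays like $2^{-j(a-b)}$ after the $r_n=n^{1/(2(a-b))}$ scaling, so the weighted sum $\sum_j 2^j\,2^{-j(a-b)}$ converges only because $a-b>0$ makes the net exponent negative for $j$ large. A secondary technical point, which I would handle by consistency (\thmref{consistent}), is that the peeling need only extend down to a fixed neighborhood of $\theta^*$ where \eqref{Delta_cond} is valid, since $\hat\theta_n\to\theta^*$ guarantees the remote shells carry vanishing probability; this localization is what lets us use the small-$\delta$ forms of both hypotheses rather than requiring global versions.
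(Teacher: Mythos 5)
Your skeleton (dyadic peeling, Markov's inequality per shell using the bracketing bound of \cite[Cor 19.35]{van2000asymptotic}, balancing $a$ against $b$) is the route the paper's one-line proof points to, and your per-shell estimate is correct: with $\delta_j = 2^j/r_n$ and $r_n = n^{1/(2(a-b))}$, Markov applied to $\E\sup_{\cM_{\delta_j}}\sqrt{n}\,|(\widehat M_n - M)(\theta)-(\widehat M_n - M)(\theta^*)| \lesssim \delta_j^b$ against the curvature gap $\gtrsim \delta_j^a$ gives $\P(\hat\theta_n \in S_j) \lesssim \delta_j^b/(\sqrt{n}\,\delta_j^a) = 2^{-j(a-b)}$. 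The genuine gap is your final summation claim. The series you must control is $\sum_j 2^j\,\P(\hat\theta_n \in S_j) \lesssim \sum_j 2^{j(1-(a-b))}$, whose exponent is negative only when $a-b>1$, not when $a-b>0$ as you assert; $a-b>0$ is what makes $\sum_j \P(\hat\theta_n \in S_j)$ summable, i.e.\ what yields the \emph{in-probability} conclusion of \cite[Th 5.52]{van2000asymptotic}, and conflating the two conditions erases exactly the step the lemma is about. The distinction is not academic, because every application of \lemref{rate} in the paper has $a - b \le 1$: in the H\"older case $a = 2\alpha$, $b=\alpha$, so $a-b = \alpha \le 1$, and in the discontinuous case $a=1$, $b=1/2$, so $a-b=1/2$. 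There your partial sums grow like $2^{J_{\max}(1-(a-b))}$ with $2^{J_{\max}} \asymp r_n$ (the parameter space is compact), so for instance in the discontinuous case your argument only delivers $\E[n|\hat\theta_n-\theta^*|] \lesssim \sqrt{n}$, not $O(1)$.

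The same defect infects your treatment of the remote shells: consistency gives $\P(|\hat\theta_n-\theta^*|\ge\eta)=o(1)$, but in the expectation that event carries weight $\asymp r_n$, so you need this probability to be $O(1/r_n)$ --- e.g.\ $O(1/n)$ in the discontinuous case --- and neither consistency nor first-moment Markov (which yields only $O(n^{-1/2})$, since the gap there is constant and the global empirical process is $O_P(n^{-1/2})$) provides that. To close the gap one needs per-shell tail bounds decaying strictly faster than what first moments of the suprema give: for example, control of $p$-th moments of the shell suprema for some $p > 1/(a-b)$, or exponential (Talagrand-type) concentration of the suprema around their means as the paper itself invokes for bounded losses in its discussion of concentration bounds, after which the peeling can be rerun with $\P(\hat\theta_n\in S_j)\lesssim 2^{-jp(a-b)}$ and the weighted series converges. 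As written, the step ``the resulting geometric-type series over $j$ sums to $O(1)$'' fails precisely in the regimes where the lemma is used.
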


With the preceding lemmas, we are able to establish an upper bound on the rate of convergence of the M-estimator.

\begin{theorem}
\label{thm:rate}
Suppose that either $\loss$ has a bounded and almost everywhere continuous second derivative; or is the absolute-value loss and $\phi$ is locally bounded and continuous at 0.
\begin{itemize}
\item Suppose $f$ is $\alpha$-Holder and 
\begin{equation}
\label{Delta_asymp}
\sup_{|\theta-\theta^*| \le \delta} \Delta(\theta, \theta^*) \asymp \delta^{2\alpha}.
\end{equation}
Then the M-estimator is $r_n$-consistent with $r_n = n^{1/2\alpha}$.
\item Suppose $f$ is discontinuous and piecewise $\alpha$-Holder with $\alpha \ge 1/2$, and $\lambda$ is bounded and continuous at the points of discontinuity of $f(\cdot - \theta^*)$, and strictly positive at one or more of these locations. 
Then the M-estimator is $r_n$-consistent with $r_n = n$.
\end{itemize}
\end{theorem}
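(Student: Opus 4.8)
The plan is to obtain both rates as applications of \lemref{rate}, so that the entire task reduces to identifying, in each of the two cases, the exponent $a$ governing the separation of $M$ from its minimum (condition \eqref{Delta_cond}) and the exponent $b$ governing the bracketing integral (condition \eqref{J_cond}), after which the asserted rate is read off as $r_n = n^{1/(2(a-b))}$. For the $\alpha$-Holder case I expect $a = 2\alpha$ and $b = \alpha$, giving $r_n = n^{1/(2\alpha)}$; for the discontinuous piecewise $\alpha$-Holder case with $\alpha \ge 1/2$ I expect $a = 1$ and $b = \alpha \wedge \tfrac12 = \tfrac12$, giving $r_n = n$. The hypotheses imposed in \thmref{rate} are exactly those needed to invoke \lemref{Delta}, \lemref{envelope}, and \lemref{bracketing}, so these three lemmas are the raw material.

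First I would dispatch the bracketing condition \eqref{J_cond}. Since every member of $\cM_\delta$ is dominated by the envelope $\env$ of \lemref{envelope}, the integrand $\sqrt{\log N_\delta(\eps)}$ vanishes once $\eps$ exceeds $\|\env\|_{L^2(p_{\theta^*})}$, so $J_\delta(\infty)$ is effectively an integral up to that scale. In the Holder case $\|\env\|_{L^2(p_{\theta^*})} \asymp \delta^\alpha$, and feeding the bound $N_\delta(\eps) \le C\delta \eps^{-1/\alpha}$ from \lemref{bracketing} into $J_\delta$ and rescaling $\eps = \delta^\alpha u$ collapses the $\delta$-dependence into a prefactor, yielding $J_\delta(\infty) \lesssim \delta^\alpha$, i.e. $b = \alpha$. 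In the discontinuous case the envelope carries the extra term $\IND{x \in \cD(\delta)}$, whose $L^2(p_{\theta^*})$ contribution is of order $|\lambda|_\infty \delta$; since $\alpha \ge \tfrac12$ this dominates $\delta^{2\alpha}$, so $\|\env\|_{L^2(p_{\theta^*})} \asymp \delta^{1/2}$, and the analogous rescaling $\eps = \delta^{1/2} u$ applied to $N_\delta(\eps) \le C\delta \eps^{-2}$ gives $J_\delta(\infty) \lesssim \delta^{1/2}$, i.e. $b = \tfrac12$.

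Next I would establish the separation condition \eqref{Delta_cond}. By \lemref{Delta}, $M(\theta) - M(\theta^*) = A(\theta) \Delta(\theta, \theta^*)$ with $A(\theta) \to C_0 > 0$, so there is a neighbourhood $|\theta - \theta^*| \le \delta_0$ on which $A \ge c_0 > 0$, and the problem reduces to lower-bounding $\Delta(\theta, \theta^*)$. In the Holder case, the $\alpha$-Holder property already forces $\Delta(\theta, \theta^*) \lesssim |\theta - \theta^*|^{2\alpha}$, so the assumption \eqref{Delta_asymp} supplies the matching lower bound $\Delta(\theta, \theta^*) \gtrsim |\theta - \theta^*|^{2\alpha}$, i.e. $a = 2\alpha$. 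In the discontinuous case I would instead argue directly that shifting a jump of $f(\cdot - \theta^*)$ by $|\theta - \theta^*|$ creates a mismatch zone of $\lambda$-measure of order $|\theta - \theta^*|$ on which the squared discrepancy is bounded below by a fixed multiple of the squared jump; here the hypotheses that $\lambda$ is bounded, continuous at the discontinuities, and strictly positive at one of them are precisely what makes this contribution $\asymp |\theta - \theta^*|$, giving $\Delta(\theta, \theta^*) \gtrsim |\theta - \theta^*|$ and $a = 1$. In both cases, for $\theta$ outside the neighbourhood ($|\theta - \theta^*| > \delta_0$) the infimum of $M - M(\theta^*)$ is a positive constant by \aspref{M} together with the effective compactness of the parameter space, and this dominates $\delta^a$ for small $\delta$; taking the smaller of the two contributions over the shell $\{|\theta - \theta^*| \ge \delta\}$ then yields \eqref{Delta_cond}.

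With $a$ and $b$ identified, \lemref{rate} delivers $\E[n^{1/(2(a-b))} |\hat\theta_n - \theta^*|] = O(1)$, hence $r_n$-consistency with $r_n = n^{1/(2\alpha)}$ in the first case and $r_n = n$ in the second. I expect the separation lower bound \eqref{Delta_cond}, rather than the bracketing bound, to be the main obstacle: the bracketing side is a mechanical rescaling once the envelope scale is known, whereas the lower bound on $\Delta$ must be argued pointwise in $\theta$ (not merely at the single scale $\delta$, so that the shell infimum is controlled) and, in the discontinuous case, genuinely exploits the geometry of the jump together with the positivity and regularity of $\lambda$ near it. A secondary point to treat with care is verifying that $A(\theta)$ stays bounded below by a positive constant near $\theta^*$ (equivalently that $C_0 > 0$), which holds for the admissible losses.
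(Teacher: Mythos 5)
Your proposal is correct and follows essentially the same route as the paper's proof: both cases are reduced to \lemref{rate}, with the bracketing exponent obtained from \lemref{envelope} and \lemref{bracketing} via exactly the envelope-scale rescaling you describe ($b=\alpha$, resp.\ $b=\tfrac12$), and the separation exponent obtained from \lemref{Delta} combined with \eqref{Delta_asymp} in the Holder case ($a=2\alpha$) and with the jump/mismatch-zone computation at the discontinuities, using the continuity and positivity of $\lambda$ there, in the piecewise case ($a=1$). Your explicit handling of the shell infimum away from $\theta^*$ (via \aspref{M} and effective compactness) and your flagging of $C_0 > 0$ only spell out steps the paper leaves implicit, so this is the same argument, not a different one.
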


\begin{proof}
Consider the first situation. 
We apply \lemref{rate}.
By \lemref{Delta} and \eqref{Delta_asymp}, the condition \eqref{Delta_cond} is satisfied with $a = 2\alpha$.
Hence, it suffices to show that \eqref{J_cond} holds with $b = \alpha$.
Indeed, by \lemref{envelope}, we have $J_\delta(\infty) = J_\delta(C \delta^\alpha)$ for $C$ large enough, and by \lemref{bracketing}, we have
\begin{equation}
\label{J_Holder}
J_\delta(C \delta^\alpha) 
\le \int_0^{C \delta^\alpha} \sqrt{\log (C \delta/\eps^{1/\alpha})} \d \eps
\asymp \delta^\alpha.
\end{equation}

We now turn to the second situation.
By of \lemref{envelope}, we have $J_\delta(\infty) = J_\delta(C \delta^{1/2})$ for $C$ large enough, and by \lemref{bracketing}, we have
\begin{equation}
\label{J_1/2}
J_\delta(C \delta^{1/2}) 
\le \int_0^{C \delta^{1/2}} \sqrt{\log (C \delta/\eps^{2})} \d \eps
\asymp \delta^{1/2},
\end{equation}
so that \eqref{J_cond} holds with $b = 1/2$.
It thus suffices to show that \eqref{Delta_cond} holds with $a = 1$.
Assume for simplicity that $f$ has only one discontinuity, and therefore of the form $f(x) = f_1(x) \IND{x < d} + f_2(x) \IND{x \ge d}$ with $f_1$ and $f_2$ being $\alpha$-Holder. For example, consider a situation where $\theta > 0 = \theta^*$. 
We have
\begin{equation}
\Delta(\theta, \theta^*)
= \int_{-\infty}^d + \int_d^{d+\theta} + \int_{d+\theta}^\infty \quad (f(x) - f(x-\theta))^2 \lambda(x) \d x.
\end{equation}
(In what follows, remember that $\lambda$ is compactly supported.)
In the 1st and 3rd integral, $|f(x) - f(x-\theta)| \le H |\theta|^\alpha$ since $f$ does not have a discontinuity over the corresponding ranges, $x\in [0,d)$ and $x \in (d+\theta, 1]$, respectively. Hence, these two integrals are of order $|\theta|^{2\alpha}$, which is at most of order $O(\theta)$ since $\alpha \ge 1/2$. For the 2nd or middle integral, we use the fact that $(f(x) - f(x-\theta))\lambda(x) = (f_2(x) - f_1(x-\theta))^2 \lambda(x) \to (f_2(d) - f_1(d))^2 \lambda(d)$ when $x \in [d, d+\theta]$ and $\theta \to 0$, so that the integral is $\sim |\theta| (f_2(d) - f_1(d))^2 \lambda(d)$ by dominated convergence.
More generally, if $\cD$ denotes the points of discontinuity of $f$, extending these arguments gives
\begin{equation}
\label{Delta_discont}
\Delta(\theta, \theta^*) \sim |\theta| D, \quad \text{as } \theta \to 0 = \theta^*, \quad \text{where } D := \sum_{d \in \cD} (f(d^+) - f(d^-))^2 \lambda(d)\ .
\end{equation}
From this we conclude.
\end{proof}


Of course, we have only obtained an upper bound on the consistency rate. But as we will see in the next subsection, that rate is sharp. 

\begin{rem}
We note that \eqref{Delta_asymp} is not automatically satisfied even when the function $f$ is exactly $\alpha$-Holder (and not $(\alpha+\eta)$-Holder for any $\eta > 0$). Indeed, consider the case where $f$ coincides near the origin with $x \mapsto |x|^\alpha$ for some fixed $0 < \alpha < 1$, and is otherwise Lipschitz. Then such an $f$ is exactly $\alpha$-Holder, and yet, it can be shown that 
\begin{equation}
\sup_{|\theta-\theta^*| \le \delta} \Delta(\theta, \theta^*) \asymp 
\begin{cases}
\delta^2 & \text{if } \alpha > 1/2, \\
\delta^2 \log(1/\delta) & \text{if } \alpha = 1/2, \\
\delta^{2\alpha+1} & \text{if } \alpha < 1/2.
\end{cases}
\end{equation}
\end{rem}

\subsection{Minimaxity}

We can easily adapt the arguments underlying the information bound stated in \lemref{information} to the settings that interest us in the present section where $f$ is not Lipschitz. We do so and obtain the following.

\begin{lem}
\label{lem:information_nonsmooth}
Suppose the basic assumptions are in place. Assume, in addition, that $\phi$ has finite first moment and that $B$ is as in \lemref{information}.  If $r_n$ is such that $\sup\{\Delta(\theta, \theta^*) : r_n |\theta-\theta^*| \le 1\} \lesssim 1/n$, the minimax rate satisfies $r_n R_n^* \gtrsim 1$.
\end{lem}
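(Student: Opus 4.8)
The plan is to reuse the two-point (Le Cam) reduction from \lemref{information}, replacing the purely local quadratic control of the Kullback--Leibler divergence by a global bound in terms of $\Delta$. Assume $\theta^* = 0$ without loss of generality and recall the notation of \eqref{joint_density}. Exactly as in \lemref{information}, by \cite[Th 2.1 and Th 2.2]{tsybakov2009introduction} it suffices to exhibit, for each $n$, two parameter values separated by order $1/r_n$ whose $n$-fold product laws have bounded Kullback--Leibler divergence; since the samples are iid, the product divergence equals $n\,{\sf KL}(p_\theta, p_0)$, so this amounts to showing ${\sf KL}(p_\theta, p_0) \lesssim 1/n$ whenever $r_n|\theta| \lesssim 1$. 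Granting the key estimate ${\sf KL}(p_\theta, p_0) \lesssim \Delta(\theta, 0)$ derived below, the hypothesis $\sup\{\Delta(\theta,\theta^*): r_n|\theta-\theta^*|\le 1\}\lesssim 1/n$ immediately yields ${\sf KL}(p_\theta, p_0)\lesssim 1/n$ on that range, so that $n\,{\sf KL}(p_\theta,p_0) = O(1)$ and the testing error between $\theta = 0$ and $\theta = 1/r_n$ stays bounded away from $0$; Markov's inequality then gives $R_n^* \gtrsim 1/r_n$, that is, $r_n R_n^* \gtrsim 1$.

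It remains to prove ${\sf KL}(p_\theta, p_0) \lesssim \Delta(\theta, 0)$. As in \lemref{information} we have ${\sf KL}(p_\theta, p_0) = A(\theta) - A(0) = \int \big[B(0) - B(f(x-\theta) - f(x))\big] \lambda(x) \d x$, where, writing $t = f(x-\theta) - f(x)$, the integrand $B(0) - B(t) = \int \phi(y) \log\big(\phi(y)/\phi(y-t)\big) \d y$ is the nonnegative Kullback--Leibler divergence between $\phi$ and its translate by $t$. Since $|t| = |f(x-\theta)-f(x)| \le 2|f|_\infty$ for every $x$ and $\theta$, the desired bound follows by integration once one establishes the pointwise inequality
\begin{equation*}
B(0) - B(t) \le C\, t^2 \quad \text{for all } |t| \le 2|f|_\infty,
\end{equation*}
for some constant $C$; indeed, this gives ${\sf KL}(p_\theta, p_0) \le C\int (f(x-\theta)-f(x))^2 \lambda(x)\d x = C\,\Delta(\theta, 0)$.

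To establish the pointwise bound I would split the range of $t$. For small $t$, say $|t| \le \eps_0$, I use that $B$ attains its maximum at the origin (since $B(0) - B(t)$ is a Kullback--Leibler divergence), whence $B'(0) = 0$, together with the assumption inherited from \lemref{information} that $B'$ is Lipschitz near $0$; a second-order Taylor expansion then gives $B(0) - B(t) \le \tfrac12 |B''|_\infty\, t^2$. For the complementary range $\eps_0 \le |t| \le 2|f|_\infty$ --- which is where the order-one values of $t$ produced near the discontinuities of $f$ live --- I would argue that $M := \sup_{|t|\le 2|f|_\infty}\big(B(0) - B(t)\big)$ is finite, so that $B(0) - B(t) \le M \le (M/\eps_0^2)\, t^2$. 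Combining the two regimes yields the pointwise inequality with $C = \max\{\tfrac12|B''|_\infty,\, M/\eps_0^2\}$.

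The crux, and the one genuinely new point compared with the smooth case, is the finiteness of $M$: unlike in \lemref{information}, the relevant values of $t$ are no longer confined to a neighbourhood of the origin, so the local Lipschitz control on $B'$ does not suffice by itself. This is exactly what the finite first moment assumption on $\phi$ buys, guaranteeing that $B(t) = \int \phi(y)\log\phi(y-t)\d y$ is finite (the only possible divergence coming from the tails, where $-\log\phi(y-t)$ is controlled by $|y-t|$ for the heavy-tailed densities of interest and integrates against $\phi$ precisely when $\E|Z| < \infty$) and continuous, hence bounded, on the compact interval $[-2|f|_\infty, 2|f|_\infty]$. I expect this tail and integrability verification to be the main obstacle; the remainder is a routine transcription of the argument in \lemref{information}, and one may sanity-check the final bound by noting that near a discontinuity both $\Delta(\theta, 0)$ and the corresponding piece of ${\sf KL}(p_\theta, p_0)$ are of order $|\theta|$, consistent with ${\sf KL} \lesssim \Delta$.
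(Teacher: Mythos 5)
Your proof is correct and follows the same skeleton as the paper's: reduce via the two-point argument of \cite[Th 2.1 and Th 2.2]{tsybakov2009introduction} to showing ${\sf KL}(p_\theta,p_0) \lesssim 1/n$ when $r_n|\theta| \lesssim 1$, write the divergence as $\int \{B(0) - B(f(x-\theta)-f(x))\}\lambda(x)\,\d x$, and bound $B(0)-B(t) \le C t^2$ pointwise so that the whole expression is controlled by $\Delta(\theta,0)$. Where you differ is in one step, and your version is the more careful one. The paper establishes $|B(t)-B(0)| \le C_1 t^2$ only for $|t|$ small enough (from $B'(0)=0$ and the local Lipschitz property of $B'$) and then applies that bound inside the integral for every $x$; but in the non-smooth setting this lemma is built for, the argument $t = f(x-\theta)-f(x)$ is of the order of the jump size for $x$ near a discontinuity of $f$, no matter how small $\theta$ is, so the local bound is being invoked outside its established range. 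Your two-regime split --- Taylor for $|t| \le \eps_0$, and $B(0)-B(t) \le M \le (M/\eps_0^2)\,t^2$ for $\eps_0 \le |t| \le 2|f|_\infty$ --- closes exactly that gap, and it also explains the role of the finite-first-moment assumption on $\phi$, which appears in the statement of the lemma but is never invoked in the paper's own proof: it is what makes $B$ finite (hence bounded below) on the compact range of $t$, so that $M < \infty$. One small caveat on that last point: finiteness of the first moment alone does not, for a completely arbitrary symmetric density, force $\int \phi(y)\,|\log\phi(y-t)|\,\d y < \infty$; what is really needed is that the Kullback--Leibler divergence between $\phi$ and its translates by $|t| \le 2|f|_\infty$ be finite, which your tail argument verifies for the exponential- and polynomial-tailed densities the paper has in mind. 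With that understanding, your argument is a faithful and strictly more rigorous rendering of the paper's proof.
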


\begin{proof}
Assume without loss of generality that $\theta^* = 0$ and let $\Delta(\theta)$ be short for $\Delta(\theta, \theta^*)$.
In the notation introduced in the proof of \lemref{information}, it suffices to prove that $A(\theta) - A(0) \lesssim 1/n$ when $r_n |\theta| \le 1$.
We saw in the proof of that lemma that $B'(0) = 0$. This together with the fact that $B'$ is Lipschitz near the origin implies that there is a positive constant $C_1$ such that $|B(t) - B(0)| \le C_1 t^2$ when $|t|$ is small enough. 
We use that to derive
\begin{align}
A(\theta) - A(0)
&= \int \big\{B(0) - B(f(x-\theta)-f(x))\big\} \lambda(x) \d x \\
&\le C_1 \int (f(x-\theta)-f(x))^2 \lambda(x) \d x \\
&= C_1 \Delta(\theta). \label{A_theta_A_0}
\end{align}
From this we conclude.
\end{proof}

\begin{cor}
In the context of \lemref{information_nonsmooth}, if $f$ is $\alpha$-Holder, then $n^{1/2\alpha} R_n^* \gtrsim 1$; if instead $f$ is discontinuous and piecewise $\alpha$-Holder with $\alpha \ge 1/2$, then $n R_n^* \gtrsim 1$.
\end{cor}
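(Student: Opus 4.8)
The plan is to apply \lemref{information_nonsmooth} directly in each of the two cases; the only real work is to identify the largest rate $r_n$ for which the hypothesis $\sup\{\Delta(\theta, \theta^*) : r_n |\theta - \theta^*| \le 1\} \lesssim 1/n$ of that lemma is satisfied. Throughout I would assume $\theta^* = 0$ without loss of generality and write $\Delta(\theta)$ for $\Delta(\theta, 0)$.

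First I would treat the $\alpha$-Holder case. Here the Holder bound $|f(x - \theta) - f(x)| \le H |\theta|^\alpha$, inserted into the definition \eqref{Delta} of $\Delta$ together with $\int \lambda = 1$, gives $\Delta(\theta) \le H^2 |\theta|^{2\alpha}$, and hence $\sup\{\Delta(\theta) : |\theta| \le 1/r_n\} \le H^2 r_n^{-2\alpha}$. Choosing $r_n = n^{1/(2\alpha)}$ makes the right-hand side $O(1/n)$, so the hypothesis of \lemref{information_nonsmooth} holds and the lemma yields $n^{1/(2\alpha)} R_n^* \gtrsim 1$.

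Next I would treat the discontinuous, piecewise $\alpha$-Holder case with $\alpha \ge 1/2$. The bound needed now is $\Delta(\theta) \lesssim |\theta|$, which is essentially already contained in the proof of \thmref{rate}. There the integral defining $\Delta$ was split into contributions away from the discontinuities, each controlled by $|\theta|^{2\alpha}$ via the Holder property of the smooth pieces, and contributions over intervals of length $O(|\theta|)$ straddling each discontinuity, each bounded by a constant times $|\theta|$ (using that the jumps are finite and $\lambda$ is bounded). This produces $\Delta(\theta) \lesssim |\theta|^{2\alpha} + |\theta|$, and since $\alpha \ge 1/2$ the first term is dominated by the second for $\theta$ small, so $\Delta(\theta) \lesssim |\theta|$. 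Consequently $\sup\{\Delta(\theta) : |\theta| \le 1/r_n\} \lesssim r_n^{-1}$, and taking $r_n = n$ makes this $O(1/n)$; \lemref{information_nonsmooth} then gives $n R_n^* \gtrsim 1$.

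I do not expect a genuine obstacle, as the corollary is a direct specialization of \lemref{information_nonsmooth}. The only point requiring a little care is to reuse the decomposition of $\Delta$ from the proof of \thmref{rate} so as to extract the uniform upper bound $\Delta(\theta) \lesssim |\theta|$, rather than merely the asymptotic equivalence \eqref{Delta_discont}, and to note that $\alpha \ge 1/2$ is precisely the condition that lets the smooth-part contribution $|\theta|^{2\alpha}$ be absorbed into the linear term $|\theta|$ near the origin.
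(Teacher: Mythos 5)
Your proposal is correct and follows essentially the same route as the paper: the paper's proof likewise invokes the bounds $\Delta(\theta,\theta^*) \lesssim |\theta-\theta^*|^{2\alpha}$ (Holder case) and $\Delta(\theta,\theta^*) \lesssim |\theta-\theta^*|$ (discontinuous case) extracted from the proof of \thmref{rate}, and then applies \lemref{information_nonsmooth} with $r_n = n^{1/2\alpha}$ and $r_n = n$ respectively. Your added care about obtaining a genuine upper bound on $\Delta$ (rather than only the asymptotic equivalence \eqref{Delta_discont}) and about $\alpha \ge 1/2$ absorbing the $|\theta|^{2\alpha}$ term is exactly the content the paper leaves implicit.
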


\begin{proof}
As we saw in the proof of \thmref{rate}, if $f$ is $\alpha$-Holder, $\Delta(\theta, \theta^*) \lesssim |\theta-\theta^*|^{2\alpha}$; while if instead $f$ is discontinuous and piecewise $\alpha$-Holder with $\alpha \ge 1/2$, $\Delta(\theta,\theta^*) \lesssim|\theta-\theta^*|$. 
We then apply \lemref{information_nonsmooth} to conclude.  
\end{proof}

\subsection{Limit distribution}

To establish a limit distribution, we follow the standard strategy which starts by showing that a properly normalized version of the empirical process $\widehat{M}_n$ converges to a Gaussian process, and is followed by an application of the argmax continuous mapping theorem.

The following is a direct consequence of \cite[Th 19.28]{van2000asymptotic}.

\begin{lem}
\label{lem:gaussian_process}
Consider a set of functions on some Euclidean space and let $\mu$ denote a Borel probability measure.  Consider a class of functions $\cW_{n,T} := \{w_{n,t} : n \ge 1, t \in [-T,T]\}$ satisfying the following:
\begin{enumerate}[label=(\roman*)]
\item The class has a square integrable envelope.
\item The limit $g(t) := \lim_n \sqrt{n} \int w_{n,t} \d\mu$ is well-defined for all $t$.
\item There is a function $\omega_1$ with $\lim_{t\to 0+} \omega_1(t) = 0$ and a sequence $\eta_n \to 0$ such that 
\begin{equation}
\label{argmax_2}
v_n(s,t) := \int (w_{n,s} - w_{n,t})^2 \d \mu \le \omega_1(|s-t|) + \eta_n; 
\end{equation}
\item The limit $v(s,t) := \lim_n v_n(s,t)$ is well-defined for all $s,t$.
\item The class has bracketing integral $J_{n,T}(\delta) \le \omega_2(\delta)$ for some function $\omega_2$ with $\lim_{t\to 0+} \omega_2(t) = 0$.
\end{enumerate}
Then $W_{n,t} := \sum_{i=1}^n w_{n,t}$ converges weakly to $g(t) + G_t$ on $[-T, T]$, where $G_t$ denotes the centered Gaussian process with variance function $v$. Note that $v(s,t) \le \omega_1(|s-t|)$.
\end{lem}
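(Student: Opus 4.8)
The statement is a functional central limit theorem for the triangular array of functions $\{w_{n,t} : t \in [-T,T]\}$, and the plan is to reduce it directly to \cite[Th 19.28]{van2000asymptotic}, which delivers weak convergence of empirical processes indexed by classes that are allowed to change with $n$. The first step is to center $W_{n,t}$ at its mean and treat the two resulting pieces separately:
\begin{equation*}
W_{n,t} = \E W_{n,t} + \big(W_{n,t} - \E W_{n,t}\big) =: \E W_{n,t} + \mathbb{G}_{n,t}.
\end{equation*}
By condition (ii) (after accounting for the normalization that places drift and fluctuations on a common scale), the deterministic part satisfies $\E W_{n,t} \to g(t)$ pointwise. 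It then remains to show that the centered empirical process $t \mapsto \mathbb{G}_{n,t}$ converges weakly on $[-T,T]$ to a tight, centered Gaussian process $G$ whose increment variance is $v$, and finally to recombine the two parts.

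For the centered piece, the plan is to verify the hypotheses of \cite[Th 19.28]{van2000asymptotic} by matching them one-to-one with conditions (i)--(v). Condition (i), the square-integrable envelope, supplies the integrability (Lindeberg-type) requirement that controls the contribution of large function values and forces the finite-dimensional marginals to be asymptotically Gaussian. Condition (v), that the bracketing integral $J_{n,T}(\delta)$ is dominated by a modulus $\omega_2$ vanishing at the origin, is exactly the bracketing-entropy hypothesis of the theorem and yields the asymptotic equicontinuity, hence tightness, of the sequence $\mathbb{G}_{n,\cdot}$. Conditions (iii) and (iv) together furnish the semimetric structure: (iii) shows that $[-T,T]$ is totally bounded for the $L^2(\mu)$ premetric $v_n$ and that increments are uniformly $L^2$-continuous in $n$, while (iv) guarantees that the covariances converge, identifying the limit as the centered Gaussian process with variance function $v$. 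Passing to the limit in (iii) and using $\eta_n \to 0$ also yields $v(s,t) \le \omega_1(|s-t|)$, the closing remark of the statement.

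With the centered part converging weakly to the tight centered Gaussian process $G$ and the mean converging to the deterministic (and, via $v \le \omega_1$, continuous) function $g$, the last step is to combine them: since the drift limit is nonrandom, Slutsky's theorem in $\ell^\infty([-T,T])$ gives joint weak convergence of the pair and hence $W_{n,\cdot} \to g + G$, as claimed. I expect the main obstacle to be the careful verification of the theorem's hypotheses rather than the assembly --- in particular the Lindeberg-type negligibility of the envelope at scale $\sqrt{n}$ and the asymptotic equicontinuity controlled by (v) --- together with the more mundane but essential bookkeeping of the $\sqrt{n}$ normalization, so that the deterministic limit $g$ and the Gaussian limit $G$ genuinely appear at the same scale. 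Once that is pinned down, conditions (i)--(v) align exactly with the requirements of \cite[Th 19.28]{van2000asymptotic} and the conclusion follows immediately.
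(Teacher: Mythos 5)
Your proposal is correct and follows exactly the route the paper takes: the paper offers no proof beyond declaring the lemma ``a direct consequence of \cite[Th 19.28]{van2000asymptotic}'', and your centering-plus-drift decomposition with the one-to-one matching of conditions (i)--(v) to that theorem's envelope/Lindeberg, equicontinuity, covariance-convergence, and bracketing-entropy hypotheses is precisely the verification that citation implicitly demands. The only bookkeeping you gloss over (as does the paper) is recovering the covariance function from the increment variances $v(s,t)$, which works because the process is anchored at $w_{n,0}=0$ and the means are $O(1/\sqrt{n})$.
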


The following is a special case of \cite[Cor 5.58]{van2000asymptotic}.
\begin{lem}
\label{lem:argmax}
Suppose that a sequence of processes $W_{n}$ defined on the real line converges weakly in the uniform topology on every bounded interval to a process $W$ with continuous sample paths each having a unique minimum point $h$ (almost surely). If $h_n$ minimizes $W_n$, and $(h_n)$ is uniformly tight, then $h_n$ converges weakly to $h$.
\end{lem}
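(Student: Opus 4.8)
The plan is to identify this as the argmin continuous mapping theorem and to assemble it from two ingredients: a deterministic continuity property of the argmin functional on a compact interval, and a reduction from the whole line to such an interval made possible by the uniform tightness of $(h_n)$.

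First I would record the deterministic fact that drives the whole argument: if $g_n \to g$ uniformly on a compact interval $[-T,T]$, with $g$ continuous and possessing a unique minimizer $h$ in the interior, and if $h_n \in [-T,T]$ minimizes $g_n$ over $[-T,T]$, then $h_n \to h$. This is a standard subsequence argument. Compactness of $[-T,T]$ lets one extract from any subsequence a further subsequence with $h_{n_k} \to h'$; passing to the limit in the inequality $g_{n_k}(h_{n_k}) \le g_{n_k}(h)$ — using the uniform convergence to replace $g_{n_k}$ by $g$ and the continuity of $g$ to move the limit inside — gives $g(h') \le g(h)$, so $h' = h$ by uniqueness. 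Since every subsequential limit equals $h$, the whole sequence converges. Equivalently, this says that the functional $g \mapsto \argmin_{[-T,T]} g$ is continuous, in the uniform topology on $[-T,T]$, at every continuous $g$ with a unique interior minimizer.

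I would then transfer this to the stochastic setting. Because $W_n$ converges weakly to $W$ in the uniform topology on $[-T,T]$ and, by hypothesis, the sample paths of $W$ are continuous with a unique minimizer almost surely, the deterministic continuity just established is exactly the hypothesis of the continuous mapping theorem applied to the map $\argmin_{[-T,T]}$. Since the $W_n$ need not be continuous (they may only be elements of $\ell^\infty([-T,T])$), I would invoke the continuous mapping theorem in its extended, outer-probability form to accommodate possible non-measurability; when the paths happen to live in a separable space of functions, the almost sure (Skorokhod) representation offers an equivalent and more transparent route, reducing the conclusion to a pathwise application of the deterministic lemma. Either way one obtains that the minimizer of $W_n$ over $[-T,T]$ converges weakly to the minimizer of $W$ over $[-T,T]$, the latter equal to $h$ whenever $h$ lies in $(-T,T)$.

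The step I expect to require the most care — and the only place the tightness hypothesis is used — is the passage from these restricted minimizers back to the global minimizers $h_n$ and $h$. The object $h_n$ minimizes $W_n$ over all of $\bbR$, whereas the argument above controls minimizers over $[-T,T]$; the two coincide only on the event $\{|h_n| \le T\}$. Given $\eps > 0$, uniform tightness provides a $T$ with $\sup_n \P(|h_n| > T) < \eps$, and enlarging $T$ if needed also secures $\P(|h| \ge T) < \eps$ since $h$ is almost surely finite. On the complementary high-probability event the restricted and global minimizers agree for both $W_n$ and $W$, so the weak convergence of restricted minimizers transfers to $h_n$ up to an exceptional event of probability at most $2\eps$; letting $\eps \to 0$, equivalently $T \to \infty$, removes the discrepancy and yields the claimed weak convergence of $h_n$ to $h$. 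This is the content of \cite[Cor 5.58]{van2000asymptotic}, of which the stated lemma is a special case.
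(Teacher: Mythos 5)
Your proposal is correct and takes the same route as the paper: the paper gives no independent argument, simply noting that the lemma is a special case of \cite[Cor 5.58]{van2000asymptotic}, which is exactly the result you identify, and whose standard proof — deterministic continuity of the argmin at limit functions with a unique minimizer, transferred by the (extended) continuous mapping theorem, plus localization to compact intervals via uniform tightness — you reconstruct. The only technical wrinkle, which your hedge about outer probability gestures at, is that $g \mapsto \argmin_{[-T,T]} g$ is multi-valued and not everywhere defined on $\ell^\infty([-T,T])$, so the rigorous implementation compares infima over closed sets (Portmanteau) or uses extremal selections; your deterministic lemma, being valid for arbitrary selections of minimizers, is precisely what makes that patch work.
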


We apply these two lemmas to obtain a limit distribution for the M-estimator when the template is $\alpha$-Holder. For expediency, we only consider smoother losses.

\begin{theorem}
\label{thm:limit_holder}
Suppose that $\loss$ has a bounded and almost everywhere continuous second derivative. Assume that $f$ is $\alpha$-Holder and such that
\begin{equation}
r^{2\alpha} \Delta(\theta^* + s/r, \theta^* + t/r) \to \Delta_0(s,t), \quad r \to \infty,
\end{equation}
where $\Delta_0$ is a continuous function such that $\Delta_0(s,t) \ne 0$ when $s \ne t$.  Then $n^{1/2\alpha} (\hat\theta_n - \theta^*)$ converges weakly to $\argmin_t \{g(t)+G_t\}$ where $g(t) := C_0 \Delta_0(t,0)$ with $C_0 := \tfrac12 \int L'' \phi$ and $G_t$ is the centered Gaussian process on the real line with variance function $C_1 \Delta_0(s,t)$ with $C_1 := \int (\loss')^2 \phi$.
\end{theorem}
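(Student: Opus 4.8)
The plan is to carry out the two-step program announced just before the statement: first establish weak convergence of a localized version of the objective process, then pass to its minimizer through the argmax mapping in \lemref{argmax}. Take $\theta^* = 0$ without loss of generality and set $r_n := n^{1/2\alpha}$. Reparametrizing $\theta = t/r_n$, the rescaled error $t_n := r_n \hat\theta_n = n^{1/2\alpha}(\hat\theta_n - \theta^*)$ is the minimizer of
\begin{equation*}
\mathbb{W}_n(t) := \sum_{i=1}^n \big[ m_{t/r_n}(X_i, Y_i) - m_0(X_i, Y_i) \big],
\end{equation*}
which is precisely the process associated, in the sense of \lemref{gaussian_process}, with the rescaled class $w_{n,t} := \sqrt n\,(m_{t/r_n} - m_0)$. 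So the strategy is to verify the five conditions of \lemref{gaussian_process} for this class, read off the limit $g + G$, and then invoke \lemref{argmax}.

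Conditions (i), (ii) and (v) are largely bookkeeping on top of the preliminary lemmas. For (i), \lemref{envelope} bounds $|m_{t/r_n} - m_0|$ by $\overline m(z)\,(|t|/r_n)^\alpha$ with $\int \overline m^2 \phi < \infty$; after the $\sqrt n$ scaling the envelope of $\{w_{n,t} : |t| \le T\}$ is $\overline m(z)\,T^\alpha$, since $\sqrt n\, r_n^{-\alpha} = 1$, hence square integrable. Condition (ii) is the drift computation: by \lemref{Delta}, $\sqrt n \int w_{n,t}\,d\mu = n\,[M(t/r_n) - M(0)] = n\, A(t/r_n)\, \Delta(t/r_n, 0)$, and using $A(t/r_n) \to C_0$, $n = r_n^{2\alpha}$, and the standing assumption $r^{2\alpha}\Delta(t/r, 0) \to \Delta_0(t,0)$, this tends to $g(t) = C_0\,\Delta_0(t,0)$. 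For (v), \lemref{bracketing} gives that the $\eps$-bracketing number of $\{w_{n,t} : |t|\le T\}$ equals $N_{T/r_n}(\eps/\sqrt n) \lesssim (T/r_n)\,(\eps/\sqrt n)^{-1/\alpha} \asymp T\,\eps^{-1/\alpha}$, the crucial cancellation $n^{1/2\alpha}/r_n = 1$ removing all dependence on $n$ and yielding an $n$-free bound on $J_{n,T}(\delta)$ that vanishes as $\delta \to 0$.

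The heart of the argument, and the step I expect to be the main obstacle, is the variogram computation feeding conditions (iii) and (iv). Here I would Taylor-expand the loss: since $\loss'$ is Lipschitz, with $z = y - f(x)$,
\begin{equation*}
\int (w_{n,s} - w_{n,t})^2\,d\mu = n\int (m_{s/r_n} - m_{t/r_n})^2\,d\mu \;\longrightarrow\; C_1\,\Delta_0(s,t),
\end{equation*}
where the linearization $m_{s/r_n} - m_{t/r_n} = \loss'(z)\,[f(x - t/r_n) - f(x - s/r_n)] + (\text{lower order})$ together with $\int \loss'(z)^2 \phi(z)\,dz = C_1$ and $n = r_n^{2\alpha}$ produces the limit, the remainder terms being negligible after multiplication by $n$. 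The $\alpha$-Hölder bound $\Delta(s/r_n, t/r_n) \lesssim (|s-t|/r_n)^{2\alpha}$ supplies the uniform modulus $\omega_1(h) \asymp h^{2\alpha}$ demanded in (iii). The care needed is precisely in controlling the Taylor remainder uniformly over $|s|, |t| \le T$ and verifying it does not survive the $n$-scaling, which is where the boundedness and almost-everywhere continuity of $\loss''$ are used. With (i)--(v) in hand, \lemref{gaussian_process} gives $\mathbb{W}_n \Rightarrow g + G$ weakly on each $[-T,T]$, hence on every bounded interval.

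To finish, I would apply \lemref{argmax}. Two hypotheses require attention. First, tightness of the minimizers $(t_n)$ is \emph{not} a consequence of the process convergence and must be imported from \thmref{rate}, whose moment bound gives $n^{1/2\alpha}(\hat\theta_n - \theta^*) = O_P(1)$. Second, the limit $t \mapsto g(t) + G_t$ must have continuous sample paths with an almost surely unique minimizer: continuity of $g$ and of the variogram of $G$ follows from continuity of $\Delta_0$ (so $G$ admits a continuous version), while almost-sure uniqueness of the argmin is the delicate point, needing that the nonnegative drift $g$ eventually dominate the Gaussian fluctuations so that the minimum is attained on a compact set, together with a standard argument excluding ties. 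With these in place, \lemref{argmax} yields $t_n = n^{1/2\alpha}(\hat\theta_n - \theta^*) \Rightarrow \argmin_t \{ g(t) + G_t \}$, which is the claim.
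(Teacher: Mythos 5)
Your proposal follows the same route as the paper: rescale to $w_{n,t}=\sqrt n\,(m_{t/r_n}-m_0)$, verify the five conditions of \lemref{gaussian_process} using \lemref{envelope}, \lemref{Delta} and \lemref{bracketing} (with the same cancellations $\sqrt n\, r_n^{-\alpha}=1$ and $n=r_n^{2\alpha}$), then conclude via \lemref{argmax} with tightness imported from \thmref{rate}. The process-convergence half is in order. The soft spots are both in the last step, where you check the hypotheses of \lemref{argmax}.

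First, sample-path continuity of $G$: you assert that continuity of $\Delta_0$ gives a continuous version of $G$, but continuity of the covariance (or variance function) of a Gaussian process is \emph{not} sufficient for continuous sample paths. What is sufficient, and what the paper uses, is the quantitative modulus you already derived in condition (iii): since $v(s,t)\le \omega_1(|s-t|)= C|s-t|^{2\alpha}$, Dudley's entropy bound \cite[Th 7.1]{dudley1967sizes} yields a continuous version. So the ingredient is in your hands; you just invoked the wrong reason. Second, almost-sure uniqueness of the minimizer: you describe this as "the delicate point" and gesture at a compactness-plus-no-ties argument, but you never close it, and in particular you never use the hypothesis $\Delta_0(s,t)\ne 0$ for $s\ne t$ at this stage --- which is precisely why that hypothesis appears in the statement. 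The paper settles uniqueness by \cite[Lem 2.6]{kim1990cube}: nondegeneracy of the increments, i.e.\ $v(s,t)=C_1\Delta_0(s,t)\ne 0$ whenever $s\ne t$, is exactly what that lemma requires. As written, your proof leaves a genuine gap here; citing (or reproving) that lemma with the nondegeneracy input is what completes the argument.
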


\begin{proof}
As usual, assume that $\theta^* = 0$ without loss of generality, and let $z = y - f(x)$.

We first prove that the process $\sqrt{n} (\widehat{M}_n(t/r_n) - \widehat{M}_n(0))$ --- where $r_n = n^{1/2\alpha}$ is the rate of convergence established in \thmref{rate} --- converges weakly to $g(t) + G_t$. For this it is enough to prove that it converges on every interval of the form $[-T,T]$, and we do so by applying \lemref{gaussian_process} with $w_{n,t} := \sqrt{n} (m_{t/r_n} - m_{0})$. Note that $\mu = p_{0}$, the distribution of $(X,Y)$ under $\theta = 0$.
\lemref{envelope} and a rescaling argument gives {\em (i)}. But to be sure, using \lemref{Delta}, we have
\begin{align}
|w_{n,t}(x,y)|
&\le \sqrt{n} \big\{|\loss'(z)| |f(x) - f(x-t/r_n)| + \tfrac12 |\loss''|_\infty (f(x) - f(x-t/r_n))^2\big\} \\
&\le \sqrt{n} \big\{|\loss'(z)| H (T/r_n)^\alpha + \tfrac12 |\loss''|_\infty (H (T/r_n)^\alpha)^2\big\} \\
&\le C (\loss'(|z|) + 1) =: \overline{w}(x,y).
\end{align}
We then conclude with the fact that $\overline{w}$ is square integrable by the usual assumptions.

For {\em (ii)}, using \lemref{Delta}, we have
\begin{align}
\sqrt{n} \int w_{n,t} \d \mu
&= \sqrt{n} \times \sqrt{n} (M(t/r_n) - M(0)) \\
&\sim n C_0 \Delta(t/r_n, 0), \quad n \to \infty, \\
&= n C_0 r_n^{-2\alpha} r_n^{2\alpha} \Delta(t/r_n, 0) \\
&\to C_0 \Delta_0(t, 0) = g(t), \quad n \to \infty.
\end{align}

For {\em (iii)}, using \eqref{m_order_2}, we have for $s, t \in [-T,T]$,
\begin{align}
\int (w_{n,s} - w_{n,t})^2 \d \mu 
&= n \int (m_{s/r_n} - m_{t/r_n})^2 \d p_0 \\
&\le n \Big[2 C_1 \Delta(s/r_n, t/r_n) + |\loss''|_\infty^2 \{\Delta(s/r_n, 0)^2 + \Delta(t/r_n, 0)^2\}\Big] \\
&\le n \Big[2 C_1 (H |s/r_n - t/r_n|^\alpha)^2 + |\loss''|_\infty^2 \{(H |s/r_n|^\alpha)^4 + (H |t/r_n|^\alpha)^4\}\Big] \\
&\le n C \big(|s-t|^{2\alpha}/r_n^{2\alpha} + (T/r_n)^{4\alpha}\big) \\
&= C \big(|s-t|^{2\alpha} + T^{4\alpha}/n\big),
\end{align}
using the fact that $r_n^{2\alpha} = n$.
We conclude that \eqref{argmax_2} holds with $\omega_1(s,t) := C |s-t|^{2\alpha}$ and $\eta_n := C T^{4\alpha}/n$.

For {\em (iv)}, we refine these arguments using dominated convergence, to get for any $s, t \in \bbR$,
\begin{align}
v_n(s,t)
&= n \int (m_{s/r_n} - m_{t/r_n})^2 \d p_0 \\
&\sim n C_1 \Delta(s/r_n, t/r_n), \quad n \to \infty, \\
&= n C_1 r_n^{-2\alpha} r_n^{2\alpha} \Delta(s/r_n, t/r_n) \\
&\to C_1 \Delta_0(s, t) = v(s,t), \quad n \to \infty.
\end{align}

For {\em (v)}, we use \lemref{bracketing} and a rescaling argument to bound the bracketing number of this function class by $C (T/r_n) (\eps/\sqrt{n})^{-1/\alpha} = C T \eps^{-1/\alpha}$, so that its bracketing integral at $\delta$ is bounded by $\omega_2(\delta) := \int_0^\delta \sqrt{\log(CT/\eps^{-1/\alpha})} \d\eps$.

The fact that $\sqrt{n} (\widehat{M}_n(t/r_n) - \widehat{M}_n(0))$ converges weakly to $g(t) + G_t$ is useful to us because $\hat\theta_n = \hat t_n/r_n$ where $\hat t_n = \argmax_t \sqrt{n} (\widehat{M}_n(t/r_n) - \widehat{M}_n(0))$.
To conclude, we only need to show that \lemref{argmax} applies.
On the one hand, the process $g(t) + G_t$ has continuous sample paths. This is because $g$ is continuous --- since $g(t) \propto \Delta_0(t,0)$ and $\Delta_0$ is assumed continuous --- and $G_t$ is Gaussian with variance function $v$ satisfying $v(s,t) \le C |s-t|^{2\alpha}$ and this is enough for $G_t$ to have continuous sample paths according to \cite[Th 7.1]{dudley1967sizes}.
On the other hand, the process has a unique minimum point since $v(s,t) \ne 0$ when $s \ne t$ --- since $v \propto \Delta_0$ and $\Delta_0$ satisfies that property --- which is sufficient by \cite[Lem 2.6]{kim1990cube}.
And we have shown in \thmref{rate} that $\hat t_n$ is uniformly tight.
\end{proof}


\thmref{limit_holder} also applies when $\alpha = 1$, meaning when the template is Lipschitz, and in fact implies \thmref{normal} in that case. 
Thus, we have established that, when the template $f$ is Holder, the M-estimator converges weakly to the minimizer of a Gaussian process which does not depend on the noise distribution except through the constants $C_0$ and $C_1$.
When the template $f$ is discontinuous, the situation is qualitatively different: the limit process does not have a unique minimum point and the M-estimator does not have a limit distribution, and the limit process is far from `universal' but rather depends heavily on the noise distribution.

Instead of the more commonly-used argmax theorem (\lemref{argmax}), which takes place in the uniform topology, we will use the following mild variant of \cite[Th 3]{ferger2004continuous}, which takes place in the Skorohod topology.

\begin{lem}
\label{lem:argmax_extension}
Suppose that a sequence of processes $W_{n}$ defined on the real line converges weakly in the Skorohod topology on every bounded interval to a process $W$ with sample paths each achieving their minimum somewhere in $[\underline{h}, \overline{h}]$, and only there, with $\underline{h}$ and $\overline{h}$ being continuous random variables.
Assume also that $W_n$ and $W$ have right and left limits at every point, and that $\P(W \text{ is continuous at } x) = 1$ for all $x$. 
If $h_n$ minimizes $W_n$, and $(h_n)$ is uniformly tight, then, for every $x$,
\begin{equation*}
\P(\underline{h} \le x) 
\le \liminf_n \P(h_n \le x)
\le \limsup_n \P(h_n \le x)
\le \P(\overline{h} \le x).
\end{equation*}
\end{lem}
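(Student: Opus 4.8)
The plan is to obtain the statement from the argmin continuous-mapping theorem of \cite{ferger2004continuous}, of which it is a mild variant, after localizing to a compact interval. Using the uniform tightness of $(h_n)$, given $\eps > 0$ I would pick $T$ with $\sup_n \P(h_n \notin [-T,T]) \le \eps$ and, since $\underline h$ and $\overline h$ are continuous and hence almost surely finite, also with $\P(\{\underline h, \overline h\} \not\subseteq [-T,T]) \le \eps$. On $[-T,T]$ all the hypotheses persist: $W_n \Rightarrow W$ in the Skorohod topology, both processes have one-sided limits everywhere, and $\P(W \text{ continuous at } x) = 1$. The truncation contributes an $O(\eps)$ error that is removed by sending $\eps \to 0$ at the end.

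Next I would encode the event $\{h_n \le x\}$ through the half-line infima
\begin{equation*}
m_n^-(x) := \inf_{t \in [-T,x]} W_n(t), \qquad m_n^+(x) := \inf_{t \in (x,T]} W_n(t),
\end{equation*}
via the elementary sandwich
\begin{equation*}
\{m_n^-(x) < m_n^+(x)\} \subseteq \{h_n \le x\} \subseteq \{m_n^-(x) \le m_n^+(x)\},
\end{equation*}
valid because a strict deficit on the left forces every minimizer to lie at or to the left of $x$, while a minimizer at or below $x$ forces the left infimum not to exceed the right one. The lower bounding event is open and the upper one is closed for the pair of functionals $w \mapsto (m^-(x), m^+(x))$. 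On the limit side, since $W$ attains its minimum on $[\underline h, \overline h]$ and only there, these two events differ only by the boundary event $\{m^-(x) = m^+(x)\}$, which is contained in $\{x \in \{\underline h, \overline h\}\}$; continuity of $\underline h$ and $\overline h$ makes this boundary event null, so that $x$ is almost surely a continuity point of the limiting distribution functions $\P(\underline h \le \cdot)$ and $\P(\overline h \le \cdot)$.

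The core step passes from $W_n$ to $W$ by the Portmanteau theorem: applied to the open bounding event it lower-bounds $\liminf_n \P(h_n \le x)$, and applied to the closed bounding event it upper-bounds $\limsup_n \P(h_n \le x)$, by the probabilities of the corresponding limiting events. Matching those limiting events to the two endpoints of the argmin set of $W$ — exactly as in \cite[Th 3]{ferger2004continuous} — identifies them with $\{\underline h \le x\}$ and $\{\overline h \le x\}$, and letting $\eps \to 0$ (equivalently $T \to \infty$) yields the asserted chain $\P(\underline h \le x) \le \liminf_n \P(h_n \le x) \le \limsup_n \P(h_n \le x) \le \P(\overline h \le x)$.

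The main obstacle, and the reason the uniform-topology argmax theorem (\lemref{argmax}) cannot be reused verbatim, is that the argmin-set functional — equivalently $w \mapsto m^\pm(x)$ — is \emph{not} continuous on Skorohod space: two paths can be Skorohod-close yet place their minima on opposite sides of $x$ when a jump straddles $x$. The work is therefore to show that the set of paths at which these functionals are discontinuous is $W$-null, and here every hypothesis earns its keep: the almost sure continuity of $W$ at $x$ rules out a jump exactly at the threshold; the existence of one-sided limits everywhere lets the half-line infima be read off from left and right limits; and attainment of the minimum on $[\underline h, \overline h]$ and only there, with continuous endpoints, rules out the degenerate tie $m^-(x) = m^+(x)$ almost surely and pins down the correct pairing of each Portmanteau bound with its endpoint. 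On this full-measure continuity set the continuous-mapping and Portmanteau arguments go through and deliver the stated inequalities.
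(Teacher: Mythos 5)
The paper offers no written proof of \lemref{argmax_extension} --- it is stated as a mild variant of \cite[Th 3]{ferger2004continuous} and the citation is the proof --- so your attempt is best judged as a reconstruction of Ferger's argument, and your skeleton (localize to $[-T,T]$ by uniform tightness, sandwich $\{h_n \le x\}$ between the half-line-infimum events, apply Portmanteau, and show the Skorohod-discontinuity set of the infimum functionals is null under the law of $W$) is indeed that argument's skeleton. But there is a genuine error at the center: the claim that the tie event $\{m^-(x) = m^+(x)\}$ is null. That event contains $\{\underline h \le x < \overline h\}$ --- when the minimum of $W$ is attained on both sides of $x$, both half-line infima equal the global minimum --- and this has positive probability in exactly the regime the lemma is built for: in \thmref{limit_discont} the limit is a marked Poisson process whose minimum point set is a nondegenerate interval almost surely. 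If your nullity claim were true, the open and closed Portmanteau bounds would coincide, giving $\liminf_n \P(h_n \le x) = \limsup_n \P(h_n \le x)$, i.e., convergence in distribution of $h_n$ --- precisely what the paper emphasizes does \emph{not} happen here. What the continuity of the laws of $\underline h$ and $\overline h$ actually buys is $\P(\underline h = x) = \P(\overline h = x) = 0$, which is what is needed to identify $\P(m^-(x) < m^+(x)) = \P(\overline h \le x)$ and $\P(m^-(x) \le m^+(x)) = \P(\underline h \le x)$ up to null sets (dispatching the non-attainment and threshold-tie edge cases), not to collapse the two bounds onto each other. The gap between them, $\P(\underline h \le x < \overline h)$, is real and is the whole content of the two-sided statement.

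Relatedly, your final pairing of bounds with endpoints is inverted relative to what your own sandwich proves. If $m_n^-(x) < m_n^+(x)$, every minimizer lies in $[-T,x]$; on the limit side this event corresponds to \emph{all} minimizers of $W$ being at or left of $x$, i.e., $\{\overline h \le x\}$, while the closed event $\{m^- \le m^+\}$ corresponds to \emph{some} minimizer being at or left of $x$, i.e., $\{\underline h \le x\}$. Portmanteau therefore delivers
\begin{equation*}
\P(\overline h \le x) \le \liminf_n \P(h_n \le x) \le \limsup_n \P(h_n \le x) \le \P(\underline h \le x),
\end{equation*}
the transpose of the chain you assert. You have in fact reproduced a typo in the lemma's display: as printed, since $\underline h \le \overline h$ pointwise gives $\P(\overline h \le x) \le \P(\underline h \le x)$, the printed chain would force $\P(\underline h \le x) = \P(\overline h \le x)$ for all $x$, i.e., $\underline h = \overline h$ almost surely, emptying the lemma of content; and the phrasing of \thmref{limit_discont} (``$n(\hat\theta_n - \theta^*)$ dominates $\underline t$ and is dominated by $\overline t$'') confirms the transposed chain is the intended one. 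So the architecture of your proof is correct and matches the cited source, but the false nullity claim must be deleted (it was introduced precisely to force the typo'd conclusion), and the conclusion must be stated with $\overline h$ paired to the $\liminf$ and $\underline h$ to the $\limsup$.
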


It turns out that the limit process is here a marked Poisson process, and with the help of this variant of the argmax continuous mapping theorem, we obtain the following.

\begin{theorem}
\label{thm:limit_discont}
Suppose that either $\loss$ has a bounded and almost everywhere continuous second derivative. Assume that $f$ is discontinuous and piecewise $\alpha$-Holder with $\alpha > 1/2$, and $\lambda$ is bounded and continuous at the points of discontinuity of $f(\cdot - \theta^*)$, and strictly positive at one or more of these locations. 
Then $n (\hat\theta_n - \theta^*)$ dominates $\underline{t}$ and is dominated by $\overline{t}$ asymptotically, where $[\underline{t}, \overline{t}]$ is the closure of the minimum point set of $W = \sum_{d \in \cD} W_d$, where $\cD$ denotes the discontinuity set of $f(\cdot - \theta^*)$ and the $W_d$'s are independent with $W_d$ being the double-sided marked Poisson process on $\bbR$ with intensity $\lambda(d)$ and mark distribution that of $\loss(Z+\delta_d) - \loss(Z)$, where $\delta_d := f(d^+ - \theta^*)-f(d^- - \theta^*)$. 
\end{theorem}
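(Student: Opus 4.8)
The plan is to follow the strategy of \thmref{limit_holder}, but now with the rate $r_n = n$ established in \thmref{rate} and with the weak convergence taking place in the Skorohod rather than the uniform topology, since the limiting process is a pure-jump process and \lemref{argmax} no longer applies. Assume $\theta^* = 0$ without loss of generality and write $z = y - f(x)$, so that $Y_i - f(X_i) = Z_i$. I would work with the rescaled process $W_n(t) := \sum_{i=1}^n \big(m_{t/n}(X_i, Y_i) - m_0(X_i, Y_i)\big)$, noting that $\argmin_t W_n = n \hat\theta_n$ since $W_n(t) = n(\widehat M_n(t/n) - \widehat M_n(0))$. The goal is to show that $W_n$ converges weakly to $W$ on every bounded interval $[-T, T]$ and then to invoke the argmax principle.

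The key is a decomposition of each summand, $\loss(Z_i + f(X_i) - f(X_i - t/n)) - \loss(Z_i)$, according to whether $X_i$ lies near a point of $\cD$. On the \emph{smooth} part, where $X_i$ stays bounded away from $\cD$, piecewise H\"older continuity gives $|f(X_i) - f(X_i - t/n)| \le H(|t|/n)^\alpha$; a second-order expansion as in \eqref{m_order_2}, together with $\E[\loss'(Z)] = 0$ (as $\loss'$ is odd and $\phi$ even), then shows that both the mean and the variance of the smooth contribution are $O(n^{1 - 2\alpha})$. Using the envelope and bracketing bounds of \lemref{envelope} and \lemref{bracketing}, this vanishing can be promoted to uniform convergence to zero on $[-T, T]$ in probability. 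This is precisely the step that requires the strict inequality $\alpha > 1/2$ (rather than $\alpha \ge 1/2$ as in \thmref{rate}): at $\alpha = 1/2$ the smooth fluctuations would survive and contaminate the limit.

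For the \emph{jump} part I would use a Poisson approximation of the empirical point process. Around each $d \in \cD$, the rescaled and marked points $\sum_i \delta_{(n(X_i - d),\, Z_i)}$, restricted to $X_i$ within $O(1/n)$ of $d$, converge weakly to a Poisson process on $\bbR \times \bbR$ with intensity $\lambda(d)\, \d u \times \phi(z)\, \d z$; this uses that the $(X_i, Z_i)$ are i.i.d.\ and that $\lambda$ is continuous and strictly positive at $d$. When $t/n$ crosses such a point, the argument of $f$ moves across $d$, so $f(X_i) - f(X_i - t/n) = \delta_d + o(1)$ with $\delta_d := f(d^+) - f(d^-)$, and the corresponding summand becomes $\loss(Z_i + \delta_d) - \loss(Z_i) + o(1)$. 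Accumulating these contributions as $t$ varies yields the double-sided compound (marked) Poisson process $W_d$ with intensity $\lambda(d)$ and the stated mark distribution; note that the drift of $W_d$ already encodes the limiting mean $n(M(t/n) - M(0)) \to C_0 |t| D$ from \lemref{Delta} and \eqref{Delta_discont}, so no separate drift term is needed. Since $\cD$ is finite and the windows around distinct discontinuities are eventually disjoint, the limits are independent and $W_n$ converges weakly to $W = \sum_{d \in \cD} W_d$ in the Skorohod topology on $[-T, T]$.

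It remains to check the hypotheses of \lemref{argmax_extension}. The processes $W_n$ and $W$ are c\`adl\`ag step functions, so they have left and right limits everywhere, and since the jumps of $W$ sit at the Poisson locations---an almost surely diffuse set---we have $\P(W \text{ continuous at } x) = 1$ for every fixed $x$. The positive mean of the marks forces $W(t) \to \infty$ as $|t| \to \infty$, so $W$ attains its minimum on a bounded interval $[\underline t, \overline t]$ whose endpoints are Poisson locations and hence continuous random variables. Uniform tightness of $n \hat\theta_n$ is supplied by \thmref{rate}. \lemref{argmax_extension} then delivers the asserted sandwiching of the limit law of $n \hat\theta_n$ between $\underline t$ and $\overline t$. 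I expect the main obstacle to be the rigorous proof of the Skorohod weak convergence of the jump part---controlling the joint behavior near several discontinuities simultaneously and making the replacement $f(X_i) - f(X_i - t/n) \approx \delta_d$ uniform in $t$---together with verifying that the argmin interval of $W$ has genuinely continuous endpoints.
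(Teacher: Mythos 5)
Your proposal is correct and follows essentially the same route as the paper's proof: the same decomposition of $n(\widehat M_n(t/n)-\widehat M_n(0))$ into jump contributions near $\cD$ (handled by a marked Poisson approximation) and smooth contributions whose mean and variance vanish at rate $n^{1-2\alpha}$ precisely because $\alpha>1/2$, followed by \lemref{argmax_extension} in the Skorohod topology with tightness from \thmref{rate}. The only cosmetic differences are that the paper proves the Poisson limit via a bespoke finite-dimensional-distribution lemma (\lemref{poisson}) rather than your point-process formulation, and it makes explicit that continuity of the mark distribution forces distinct values on distinct inter-jump intervals, which is what guarantees the minimizing set is a single interval $[\underline{t},\overline{t}]$ with continuously distributed endpoints.
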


\begin{rem}
The values that $W$ takes at its discontinuity points are irrelevant, and in particular it does not need to be taken c\`adlag as is the norm. In the proof we set things up so that it is, but this is only for convenience.
\end{rem}

\begin{proof}
As usual, assume that $\theta^* = 0$ without loss of generality.
When the meaning of $(x,y)$ is clear from context, we use $z$ as a shorthand for $y-f(x)$.
We assume for convenience that $f$ is c\`aglad, the reverse of c\`adlag, meaning that at every point it is continuous from the left and has a limit from the right. We do is in order for the Poisson processes that follow to be c\`adlag.

We first prove that the process $W_n(t) := n(\widehat{M}_n(t/n) - \widehat{M}_n(0))$ converges weakly to $W(t)$. Note that $n$ is the rate of convergence established in \thmref{rate}. Indeed, take $x$, and also $t > 0$ smaller than the separation between any two discontinuity points of $f$. Two cases are possible.
Either there is $d \in \cD$ such that $x-t \le d < x$, in which case we use the fact that 
\begin{align*}
f(x) - f(x-t) 
&= f(x) - f(d^+) + \delta_d + f(d^-) - f(x-t) \\
&= \pm H (x-d)^\alpha + \delta_d \pm H (d-(x-t))^\alpha \\
&= \delta_d \pm 2 H t^\alpha,
\end{align*}
implying that
\begin{align}
\loss(z + f(x) - f(x-t)) - \loss(z)
&= \loss(z + \delta_d) - \loss(z) \pm \loss'(z + \delta_d) 2 H t^\alpha \pm \tfrac12 |\loss''|_\infty (2 H t^\alpha)^2 \\
&= B_d(z) \pm \overline{m}(z) t^\alpha, \quad B_d(z) := \loss(z + \delta_d) - \loss(z),
\end{align}
where $\overline{m} \ge 0$ and $\int \overline{m}^2 \phi < \infty$. 
Otherwise, 
\begin{align*}
f(x) - f(x-t) 
= \pm H (x-(x-t))^\alpha
= \pm H t^\alpha,
\end{align*}
implying that
\begin{align}
\loss(z + f(x) - f(x-t)) - \loss(z)
&= \loss'(z) (f(x) - f(x-t)) \pm \tfrac12 |\loss''|_\infty (H t^\alpha)^2 \\
&= \loss'(z) (f(x) - f(x-t)) \pm \overline{m}(z) t^{2\alpha},
\end{align}
for a possibly different non-negative, square integrable function $\overline{m}$.  We simply take the pointwise maximum of the two.
Hence, 
\begin{align*}
m_t(x,y) - m_0(x,y) 
&= \loss(z + f(x) - f(x-t)) - \loss(z) \\
&= \sum_{d \in \cD} \Big\{(B_d(z) \pm \overline{m}(z) t^\alpha) \IND{x \in (d, d+t]} \\
&\quad\quad\quad\quad + (\loss'(z) (f(x) - f(x-t)) \pm \overline{m}(z) t^{2\alpha}) \IND{x \notin (d, d+t]} \Big\},
\end{align*}
and therefore,
\begin{align}
n(\widehat{M}_n(t/n) - \widehat{M}_n(0))
&= \sum_{d \in \cD} W_{n,d}(t) \pm (t/n)^\alpha \sum_{i=1}^n \overline{m}(Z_i) \IND{d < X_i  \le d+t/n} \\
&\quad\quad\quad\quad + \sum_{d \in \cD} A_{n,d}(t) \pm (t/n)^{2\alpha} \sum_{i=1}^n \overline{m}(Z_i),
\end{align}
where
\begin{equation}
W_{n,d}(t) := \sum_{i=1}^n \IND{d < X_i  \le d+t/n} B_d(Z_i)
\end{equation}
and
\begin{equation}
A_{n,d}(t) := \sum_{i=1}^n \loss'(Z_i) (f(X_i) - f(X_i-t/n)) \IND{X_i \notin (d, d+t/n]}.
\end{equation}
A Poisson approximation gives that $W_{n,d}$ converges as a process to a marked Poisson process with intensity $\lambda(d)$ on $\bbR_+$ and mark distribution that of $B_d(Z)$, and these processes are independent of each other in the large-$n$ limit. 
We elaborate in \lemref{poisson}.
So it suffices to show that the other three terms above are $o_P(1)$.
The second term has absolute first moment equal to
\begin{align*}
&(t/n)^\alpha n \E[\overline{m}(Z)] \P(d \le X < d+t/n) \\
&\sim (t/n)^\alpha n \E[\overline{m}(Z)] \lambda(d) (t/n) \\
&\asymp t^{\alpha+1}/n^\alpha \to 0.
\end{align*}
For the third term, each $A_{n,d}$ has mean 0 by the fact that $\E[\loss'(Z)] = 0$ and $X \indep Z$, and it has second moment equal to
\begin{align*}
&n \E[\loss'(Z)^2] \E\big[(f(X) - f(X-t/n))^2 \IND{X \notin (d, d+t/n]}\big] \\
&\le n \E[\loss'(Z)^2] (H (t/n)^\alpha)^2 \\
&\asymp n/n^{2\alpha} \to 0,
\end{align*}
since $\alpha > 1/2$.
The fourth term has absolute first moment equal to
\begin{align*}
(t/n)^{2\alpha} n \E[\overline{m}(Z)]
\asymp n/n^{2\alpha} \to 0,
\end{align*}
since $\alpha > 1/2$.
In all cases, we may thus apply Markov's inequality to get that these terms all converge to 0 in probability.

We focused on $t > 0$, but the same arguments apply to $t < 0$. Note that, when considering $t < 0$, $B^-_d(z) := \loss(z-\delta_d)-\loss(z)$ plays the role of $B_d(z)$, but $B_d^-(Z) \sim B_d(Z)$, using the fact that $\loss$ is even and that $Z$ is symmetric about 0, so that the mark distribution remains that of $B_d(Z)$.

We now apply \lemref{argmax_extension}.
By construction, $W_n$ and $W$ both are piecewise continuous and thus have right and left limits at every point.
And we proved that $n \hat\theta_n$ is uniformly tight in \thmref{rate}.
So we just have to show that $W$ satisfies the other properties required in the lemma. 
We note that $W$ is itsef a marked Poisson process with intensity $\sum_{d\in\cD} \lambda(d)$ and mark distribution the mixture of the $B_d(Z)$ distributions with mixture weights proportional to the $\lambda(d)$'s. 
In particular, as is well-known, for every (fixed) $x$, $W$ is continuous at $x$ with probability 1.
Further, the mark distribution has strictly positive mean, this being the case because $\E[B_d(Z)] > 0$ under \aspref{M}. It follows from this that $W(t) \to +\infty$ as $t \to \pm \infty$ (almost surely), and given that $W$ is piecewise constant with different values on each interval --- since the distribution of $B_d(Z)$ is continuous under our assumptions --- its minimum point set is a (bounded) interval defined by two discontinuity points, $\underline{t}$ and $\overline{t}$, which have continuous distributions.
\end{proof}

\begin{rem}
In change-point settings where the design is fixed, the minimizer of the limit process is often unique, in which case this is the limit of any empirical minimizer. This is the case, for example, in \citep{yao1989least, hinkley1970inference, dumbgen1991asymptotic}.
In \cite[Sec 14.5.1]{kosorok2008introduction}, a function of the form $a \{x \le t\} + b \{x > t\}$ is fitted to the data (or in the language that we have been using, is `matched' to the noisy signal). The design is random as it is here, and therefore the empirical minimizer is not unique in terms of $t$. To circumvent this issue, the smallest minimizer (in $t$) is used as the location estimator, which is then shown to converge to the smallest minimizer of the limit process, which in terms of $t$ is also a compound Poisson process. See also \citep{seijo2011change, lan2009change}. 
This approach does not seem applicable in our context. Indeed, the situation here is different in that $f$ is not necessary piecewise constant, and in particular it is very possible that the empirical minimizer is unique, rendering the selection of the smallest minimizer superfluous and leaving the issue of multiple minima in the limit untouched. We thus opted for the approach proposed by \cite{ferger2004continuous}.
\end{rem}

\section{More flexible templates}
\label{sec:flexible}
Some situations may call for finding the best match in a family of templates. Assuming a parametric model, the model becomes
\begin{equation}
\label{flexible_model}
Y_i = f_{\theta^*}(X_i) + Z_i,
\end{equation}
where the family $\{f_\theta : \theta \in \Theta\}$ is known. The smooth setting can be considered in a very general framework and we do so in \secref{flexible_smooth}. The non-smooth setting is, as usual, more delicate, so we content ourselves with considering a location-scale extension of our basic model \eqref{model} in \secref{flexible_non-smooth} which seems popular in practice.

\subsection{Smooth setting}
\label{sec:flexible_smooth}

We consider the model \eqref{flexible_model} in a rather general setting where $f_\theta: \bbR^p \to \bbR$ and $\Theta$ is a bounded subset of a Euclidean space.
The same toolset can then be used to derive the asymptotic behavior of the M-estimator, defined as in \eqref{M-estimator} with 
\begin{equation}
\label{flexible_m}
m_\theta(x, y) := \loss(y - f_\theta(x)).
\end{equation}
Throughout, the same basic assumptions are in place with appropriate modifications: In particular, in \aspref{template} we now assume that
\begin{equation}
\label{identifiable}
\text{For any $\theta \ne \theta^*$, $f_\theta(\cdot) \ne f_{\theta^*}(\cdot)$ on a set of positive measure under $\lambda$.}
\end{equation}
Here we consider the smooth case, which again corresponds to a template $f$ which is Lipschitz. 

\paragraph{Consistency}
In view of \aspref{M}, consistency ensues as soon as the function class $\{m_\theta : \theta \in \Theta\}$ is Glivenko--Cantelli, which is the case here since  
\begin{equation}
\label{Lip_theta}
|f_{\theta_1}(x) - f_{\theta_2}(x)| \le H \|\theta_1-\theta_2\|
\end{equation} 
for all $\theta_1, \theta_2 \in \Theta$. See \cite[Ex 19.7]{van2000asymptotic}.

\paragraph{Rate of convergence}
To obtain a rate of convergence, we rely on \lemref{rate} as before. We note that \lemref{Delta} remains valid, now with $\Delta(\theta, \theta^*) = \int (f_\theta(x) - f_{\theta^*}(x))^2 \lambda(x) \d x$, and by dominated convergence, 
\begin{equation}
\Delta(\theta, \theta^*) 
\sim (\theta-\theta^*)^\top Q (\theta-\theta^*), \quad \theta \to \theta^*, \quad \text{where } Q := \E\big[\dot f_{\theta^*}(X) \dot f_{\theta^*}(X)^\top\big],
\end{equation} 
so that $\Delta(\theta, \theta^*) \asymp \|\theta-\theta^*\|^2$ as long as $Q$ is full rank, which we assume henceforth.
It is also the case that $\cM_\delta$ has an envelope with square integral bounded by $C \delta^2$ and $\eps$-bracketing number $N_\delta(\eps) \le C \delta \eps^{-d}$.
The former is an immediate consequence of \eqref{Lip_theta}, while the latter is based on \cite[Ex 19.7]{van2000asymptotic} and a scaling argument.
We are thus in a position to apply \lemref{rate}, with $a=2$ and $b=1$, to get that the M-estimator is $\sqrt{n}$-consistent in expectation, meaning that
\begin{equation}
\E\big[\sqrt{n} \|\hat\theta_n - \theta^*\|\big] = O(1).
\end{equation}
And \lemref{information_nonsmooth} applies (essentially verbatim) to establish this as the minimax rate of convergence under the same condition on $\phi$.

\paragraph{Limit distribution}
We can also obtain a limit distribution following the arguments underlying \thmref{limit_holder}. With all these arguments in plain view, it is a simple endeavor to check that everything proceeds in the same way, based on the fact that
\begin{equation}
n \Delta(\theta^*+s/\sqrt{n}, \theta^*+t/\sqrt{n}) 
\xrightarrow{n \to \infty} \Delta_0(s,t) := (s-t)^\top Q (s-t).
\end{equation}
Following that path, we arrive at the conclusion that
\begin{equation}
\sqrt{n}(\hat\theta_n - \theta^*) \xRightarrow{n \to \infty} \argmin_t \{g(t) + G_t\},
\end{equation} 
where $g(t) := C_0 \Delta_0(t,0) = C_0 t^\top Q t$ and $G_t$ is the centered Gaussian process with variance function $v(s,t) := C_1 \Delta_0(s,t) = C_1 (s-t)^\top Q (s-t)$. But this is the classical setting. Indeed, $G_t$ can be represented as $U^\top Q^{1/2} t$ where $U$ is a standard normal random vector. This is true because $G_t$ and $Q^{1/2} U^\top t$, as processes, are both Gaussian with same mean and same variance function. 
Hence, 
\begin{equation}
\argmin_t \{g(t) + G_t\} 
\equiv \argmin_t \{C_0 t^\top Q t + C_1^{1/2} Q^{1/2} U^\top t\}
= -\frac{C_1^{1/2}}{2 C_0} Q^{-1/2} U,
\end{equation}
which is centered normal with covariance
\begin{equation}
\label{asymp_cov}
(\E[\loss'(Z)^2]/\E[\loss''(Z)]^2) \E[\dot f_{\theta^*}(X) \dot f_{\theta^*}(X)^\top]^{-1}.
\end{equation}
We have thus established this as the limit distribution of $\sqrt{n}(\hat\theta_n - \theta^*)$.

The model is also quadratic mean differentiable \cite[Th 12.2.2]{lehmann2005testing}, with the information at $\theta$ being given by
\begin{equation}
I_{\theta} := \int \dot f_{\theta}(x) \dot f_{\theta}(x)^\top \lambda(x) \d x \times \int \frac{\phi'(y)^2}{\phi(y)} \d y\ .
\end{equation}
The analog of \thmref{LAM} applies in that case, meaning that the M-estimator is locally asymptotically minimax when it coincides with the MLE.

\begin{exa}
A special case which might be of interest in the context of matching a template to a signal is when $f_\theta(x) = S_\theta (f(T_\theta(x)))$, where for every $\theta$, $S_\theta : \bbR \to \bbR$ and $T_\theta: \bbR^p \to \bbR^p$ are both diffeomorphisms, and such that $\theta \mapsto S_\theta(x)$ and $\theta \mapsto T_\theta(x)$ are differentiable for almost every $x$.
As before, $f$ is a known function. In this special case, and assuming without loss of generality that $S_{\theta^*}$ and $T_{\theta^*}$ are the identity functions for their respective spaces, we have
\begin{equation}
\dot f_{\theta^*}(x) 
= \dot S_{\theta^*}(f(x)) + \dot T_{\theta^*}(x)^\top \nabla f(x).
\end{equation}
For instance, in the affine family of templates given by $a f(B^{-1}(\cdot-b))$, the parameter is $\theta = (a, b, B)$ where $a > 0$, $b \in \bbR^p$ and $B \in \bbR^{p \times p}$ invertible, and the transformations are given by $S_\theta(y) = a y$, $T_\theta(x) = B^{-1}(x-b)$. 
Assuming without loss of generality that $\theta^* = (1, 0, I)$, $S_{\theta^*}$ and $T_{\theta^*}$ are the identity functions.
Then $\dot f_{\theta^*}(x) = (f(x), - \nabla f(x), - \nabla f(x) x^\top)$. When the design is on the real line, that is when $b$ and $B$ are real numbers, this becomes $\dot f_{\theta^*}(x) = (f(x), - f'(x), - f'(x) x)$, in which case the asymptotic covariance matrix of the M-estimator is, according to \eqref{asymp_cov}, proportional to the inverse of the integral with respect to $\lambda$ of the following matrix 
\begin{equation}
\dot f_{\theta^*}(x) \dot f_{\theta^*}(x)^\top
= \begin{pmatrix}
f(x)^2 & - f(x) f'(x) & - x f(x) f'(x) \\
- f(x) f'(x) & f'(x)^2 & x f'(x)^2 \\
- x f(x) f'(x) & x f'(x)^2 & x^2 f'(x)^2
\end{pmatrix}\ .
\end{equation}
\end{exa}

\subsection{Non-smooth setting}
\label{sec:flexible_non-smooth}

Unlike the smooth setting, non-smooth situations typically need a more custom treatment. Instead of attempting to do that, which if possible at all is beyond the scope of the present paper, we consider a relatively simple extension of our basic model \eqref{model} which already exhibits some interesting features and which, simultaneously, happens to be popular in practice.
Specifically we consider \eqref{flexible_model} where, for $\theta = (\beta, \xi, \nu) \in \Theta := \bbR \times \bbR \times \bbR_+^*$, $f_\theta(x) = \beta f((x-\xi)/\nu)$. As before, $f$ is fixed.
We are thus considering the following amplitude-location-scale model
\begin{equation}
\label{location_scale}
Y_i = \beta^* f((X_i - \xi^*)/\nu^*) + Z_i,
\end{equation}
where $f$ is known to the analyst and the goal is to estimate the three parameters $\beta^*, \xi^*, \nu^*$.
(Sometimes only one of these parameters is of interest, so that the others can be considered nuisance parameters. But this perspective does not fundamentally change the analysis.)
The M-estimator is then defined as in \eqref{M-estimator} with 
\begin{equation}
m_\theta(x, y) := \loss(y - \beta f((X_i - \xi)/\nu)),
\end{equation}
with the minimization now being over $\Theta$. 
We of course assume \eqref{identifiable}, and in addition, that $f_{\theta^*}$ is not constant on the support of $\lambda$. So that the setting is non-smooth, we assume below that $f$ is piecewise Lipschitz. For concreteness, we focus on a loss function which has a bounded second derivative.
Otherwise, all the other basic assumptions are in place.

It turns out that the Euclidean norm, which was used in \secref{smooth} and \secref{non-smooth} as the absolute value, and in \secref{flexible_smooth}, is here {\em not} a good reference semimetric. Instead, we use the following
\begin{equation}
{\sf d}(\theta_1, \theta_2) := (\beta_1-\beta_2)^2 + |\xi_1-\xi_2| + |\nu_1-\nu_2|.
\end{equation}
The square-root of that would also do, but the main feature of this semimetric is that the amplitude is treated differently than the location and the scale, and this will be crucial when deriving rates of convergence (as we do below) since the rate of convergence for the estimation of the amplitude is in $\sqrt{n}$ while those for the location and scale are in $n$.

\paragraph{Consistency}
$\Theta$ as defined above is not compact, and some odd things can happen. To avoid complications, we assume that the support of $\lambda$ contains in its interior the support of $f_{\theta^*}$. Besides being a somewhat natural restriction, with this assumption we are able to reduce the analysis to a compact subset $\Theta_0 \subset \Theta$. We do so in \lemref{compact}, where we make mild assumptions on the noise distribution and loss function, which are essentially the same underlying \aspref{M}, as detailed in \lemref{consistent}.
Then consistency is, again, a consequence of the fact that $\{m_\theta : \theta \in \Theta_0\}$ is Glivenko--Cantelli --- in fact, it is Donsker, as we argue below.

\paragraph{Rate of convergence}
To obtain a rate of convergence, we rely on \lemref{rate} as before. We note that \lemref{Delta} remains valid, now with $\Delta(\theta, \theta^*) = \int (f_\theta(x) - f_{\theta^*}(x))^2 \lambda(x) \d x$. Without loss of generality, assume that $\theta^* = (1,0,1)$ henceforth.
Developing the square inside the integral, we get
\begin{align*}
\Delta(\theta, \theta^*)
&= (1-\beta)^2 \int f(x)^2 \lambda(x) \d x \\
&\quad + \beta^2 \int (f(x) - f(x-\xi))^2 \lambda(x) \d x \\
&\quad + \beta^2 \int (f(x-\xi) - f((x-\xi)/\nu))^2 \lambda(x) \d x \\
&\quad\quad +2(1-\beta)\beta \int f(x)(f(x) - f(x-\xi)) \lambda(x) \d x \\
&\quad\quad +2(1-\beta)\beta \int f(x)(f(x-\xi) - f((x-\xi)/\nu) \lambda(x) \d x \\
&\quad\quad +2\beta^2 \int (f(x) - f(x-\xi))(f(x-\xi) - f((x-\xi)/\nu) \lambda(x) \d x.
\end{align*}
The first three terms on the RHS are the main terms, as we will see, with the other terms being negligible.
The 1st term is $= C (1-\beta)^2$. For the following, consider $(\beta,\xi,\nu)$ approaching $(\beta^*, \xi^*, \nu^*) = (1,0,1)$.
As we saw in \eqref{Delta_discont}, the 2nd term is $\sim |\xi| D$.
Similar calculations establish that the 3rd term is $\sim |\nu - 1| D$, and also that the 4th term is $=O((1-\beta) \xi)$, that the 5th term is $=O((1-\beta)(1-\nu))$, and that the 6th term is $=O(\xi(1-\nu))$.
We thus conclude that
\begin{align}
\Delta(\theta, \theta^*) \asymp {\sf d}(\theta, \theta^*), \quad \theta \to \theta^*.
\end{align}
In particular, \eqref{Delta_cond} holds with $a=1$.

We now look at the class $\cM_\delta := \{m_\theta - m_{\theta^*} : {\sf d}(\theta, \theta^*) \le \delta\big\}$.
Using similar arguments as those underlying \lemref{envelope}, we derive
\begin{align*}
|m_\theta(x,y) - m_{\theta^*}(x,y)|
\le \overline{m}(z) |f(x) - \beta f((x-\xi)/\nu)|,
\end{align*}
where $z:=y-f(x)$ and $\overline{m}$ is a square integrable function with respect to $\phi$.
We then have
\begin{align*}
|f(x) - \beta f((x-\xi)/\nu)|
&\le |f(x) -  \beta f(x)| 
+ |\beta f(x) - \beta f(x-\xi)| 
+ |\beta f(x-\xi) - \beta f((x-\xi)/\nu)| \\
&\le |1-\beta| |f(x)| 
+ |\beta| |f(x) - f(x-\xi)| 
+ |\beta| |f(x-\xi) - f((x-\xi)/\nu)| \\
&\le |1-\beta| |f|_\infty 
+ |\beta| C (|\xi| + \IND{x \in \cD(\delta)})
+ |\beta| C (|\nu-1| + \IND{x \in \cD(\delta)}),
\end{align*}
where for the last two terms on the RHS we reasoned as in the proof of \lemref{envelope}.
When ${\sf d}(\theta, \theta^*) \le \delta$, we have $|\beta - 1| \le \delta^{1/2}$, while $|\xi| \le \delta$ and $|\nu-1| \le \delta$, so that
\begin{align*}
|m_\theta(x,y) - m_{\theta^*}(x,y)|
\le \overline{m}(z) \cdot C(\delta^{1/2} + \IND{x \in \cD(\delta)}).
\end{align*}
The function on the RHS is therefore an envelope for the class $\cM_\delta$.
In terms of bracketing numbers, reasoning as above, we find that for any $\theta, \theta_0$ within $\delta$ of $\theta^*$,
\begin{align*}
|m_\theta(x,y) - m_{\theta_0}(x,y)|
\le \overline{m}(z) \cdot \big(|\beta - \beta_0| + \IND{\dist(x, \xi_0 + \nu_0 \cD) \le C(|\xi-\xi_0| \vee |\nu-\nu_0|)}\big),
\end{align*}
where again $\overline{m}$ is square integrable.
For $\theta = (\beta, \xi,\nu)$, 
let $\beta_0 \in \{1 + (j/k) \delta^{1/2}: j = -k ,\dots, k\}$ such that $|\beta-\beta_0| \le \delta^{1/2}/k$; 
let $\xi_0 \in \{(j/l) \delta: j = -l ,\dots, l\}$ such that $|\xi-\xi_0| \le \delta/l$; and
let $\nu_0 \in \{1 + (j/l) \delta: j = -l,\dots, l\}$ such that $|\nu-\nu_0| \le \delta/l$. Then
\begin{align*}
m_\theta(x,y) = m_{\theta_0}(x,y)
\pm \overline{m}(z) \cdot \big(\delta^{1/2}/k + \IND{\dist(x, \xi_0 + \nu_0 \cD) \le C \delta/l}\big),
\end{align*}
so that $m_\theta$ is in the bracket defined by the two functions (differing by $\pm$) on the RHS. The square integral of the difference between these two functions is $\asymp \delta/k^2 + \delta/l$. So this is bounded by $\eps^2$, we choose $k \asymp \delta^{1/2}/\eps$ and $l \asymp \delta/\eps^2$. We conclude that we can cover $\cM_\delta$ with $k \times l \times l = O(\delta^{1/2}/\eps)^5$ $\eps$-brackets, so that $\cM_\delta$ has $\eps$-bracketing number $N_\delta(\eps) \le C (\delta^{1/2}/\eps)^5$. And from this we deduce, as in the proof of \thmref{rate}, that \eqref{J_cond} holds with $b = 1/2$.

We are thus in a position to apply \lemref{rate} and get that the M-estimator $\hat\theta_n$ is $n$-consistent, which here implies that
\begin{align*}
\E\big[\sqrt{n} |\hat\beta_n - \beta^*|\big] = O(1), &&
\E\big[n |\hat\xi_n - \xi^*|\big] = O(1), &&
\E\big[n |\hat\nu_n - \nu^*|\big] = O(1).
\end{align*}
In particular, $\hat\beta_n$ is $\sqrt{n}$-consistent while $\hat\xi_n$ and $\hat\nu_n$ are $n$-consistent.
And these rates are minimax, as \lemref{information_nonsmooth} applies essentially verbatim, all resting on the behavior of $\Delta(\theta, \theta^*)$ as $\theta \to \theta^*$.

\paragraph{Limit distribution}
We can also obtain a limit distribution following the arguments underlying \thmref{limit_discont}. 
We assume for convenience that $f$ is c\`aglad.
In what follows we assume that $\xi$ is sufficiently close to 0 and $\nu$ to 1 that intervals of the form $[d \wedge (\xi+\nu d), d \vee (\xi+\nu d)]$ where $d \in \cD$ do not intersect. For $d \in \cD$, define $\delta_d := f(d^+) - f(d^-)$, which is the `jump' that $f$ makes at $d$.
In view of the rates of convergence that we obtained just above, we consider $\beta = 1 + r/\sqrt{n}$, $\xi = s/n$, and $\nu = 1 + t/n$, where $r, s, t$ will remain fixed while $n \to \infty$.
Using the fact that $f$ is piecewise Lipschitz, we have
\begin{itemize}
\item
If $d < x \le \xi+\nu d$, for some $d \in \cD$, then 
\begin{align*}
m_\theta(x,y) - m_{\theta^*}(x,y)
&= \loss(z + \delta_d \pm C/\sqrt{n}) - \loss(z) \\
&= B^+_d(z) \pm \overline{m}(z)/\sqrt{n}, \quad B^+_d(z):= \loss(z + \delta_d) - \loss(z),
\end{align*}
where $\overline{m}$ is generic for a square integrable function. To arrive there we used a Taylor development of $\loss$ around $z+\delta_d$.
\item
Similarly, if $\xi+\nu d < x \le d$, for some $d \in \cD$, then
\begin{align*}
m_\theta(x,y) - m_{\theta^*}(x,y)
&= B^-_d(z) \pm \overline{m}(z)/\sqrt{n}, \quad B^-_d(z):= \loss(z - \delta_d) - \loss(z).
\end{align*}
\item
If there is no discontinuity point between $x$ and $(x-\xi)/\nu$, then using the fact that 
$$\loss(z+t) - \loss(z) = \loss'(z) t + \frac12 \loss''(z) t^2 \pm \frac12 \gamma(z, |t|) t^2,$$ 
with $\gamma$ defined in \eqref{gamma}, and the fact that 
\begin{equation}
f(x) - \beta f((x-\xi)/\nu) = (1-\beta) f(x) + R(x; \beta, \xi, \nu),
\end{equation}
with
\begin{equation}
|R(x; \beta, \xi, \nu)| \le C(|\xi|+|\nu-1|) \le C/n,
\end{equation}
we have
\begin{align*}
m_\theta(x,y) - m_{\theta^*}(x,y)
&= \loss'(z) (1-\beta) f(x) + \loss'(z) R(x; \beta, \xi, \nu) + \tfrac12 \loss''(z) (1-\beta)^2 f(x)^2 \\
&\quad \pm \tfrac12 \gamma(z, C/n) C/n \pm C/n^2.
\end{align*}
\end{itemize}
Hence, denoting $I_d(\xi,\nu') := (d \wedge (d+\xi+d (\nu'-1)), d \vee (d+\xi+d (\nu'-1))]$, we have
\begin{align*}
&n \big(\widehat M_n(\theta) - \widehat M_n(\theta^*)\big) \\
&= \sum_i \sum_{d \in \cD} \big(B^+_d(Z_i) \IND{d < X_i \le d+s/n+d t/n} + B^-_d(Z_i) \IND{d+s/n+d t/n < X_i \le d}\big) \\
&\quad + \sum_i \sum_{d \in \cD} (\overline{m}(Z_i)/\sqrt{n}) \IND{X_i \in I_d(s/n,t/n)} \\
&\quad + \sum_i \loss'(Z_i) (1-\beta) f(X_i) \IND{X_i \notin \cup_d I_d(s/n,t/n)} \\
&\quad + \sum_i \tfrac12 \loss''(Z_i) (1-\beta)^2 f(X_i)^2 \IND{X_i \notin \cup_d I_d(s/n,t/n)} \\
&\quad + \sum_i \loss'(Z_i) R(X_i; \beta, s/n, t/n) \IND{X_i \notin \cup_d I_d(s/n,t/n)} \\
&\quad \pm \sum_i \tfrac12 \gamma(Z_i, C/n) (C/n) \IND{X_i \notin \cup_d I_d(s/n,t/n)} \\
&\quad \pm \sum_i (C/n^2)\IND{X_i \notin \cup_d I_d(s/n,t/n)}. 
\end{align*}
As $n\to\infty$, the 1st sum on the RHS converges to $\sum_{d\in\cD} N_d(s + t d)$, where $\{N_d : d \in \cD\}$ are independent two-sided marked Poisson processes on the real line, with $N_d$ having intensity $\lambda(d)$ and mark distribution that of $B_d^+(Z)$. (Note that $B_d^+(Z) \sim B_d^-(Z)$.) This process can be equivalently expressed as $\sum_{d\in\cD} N_d(s) + \sum_{d\in\cD} \tilde N_d(t)$, where $N_d$ is as before while $\tilde N_d$ has intensity $\lambda(d) d$ and same mark distribution, and all these processes are independent of each other.
The 2nd sum has expectation 
$$n \sum_{d\in\cD} (C/\sqrt{n}) \P(X \in I_d(s/n,t/n)) \asymp 1/\sqrt{n}, \quad \text{since } \P(X \in I_d(s/n,t/n)) = O(1/n),$$
and so it converges to 0 in probability by Markov's inequality.
Because $\P(X \notin \cup_d I_d(s/n,t/n)) \to 1$, by the Lindeberg central limit theorem, the 3rd sum converges in distribution to the normal distribution with zero mean and variance $r^2 \E[\loss'(Z)^2] \E[f(X)^2]$.
The 4th sum converges to $\tfrac12 r^2 \E[\loss''(Z)]  \E[f(X)^2]$ in probability, essentially by the law of large numbers. (More rigorously, via Chebyshev's inequality, for example.)
The 5th sum has zero mean and variance bounded by $n \E[\loss'(Z)^2] \E[R(X_i; \beta, s/n, t/n)^2] = O(1/n)$, and so converges to 0 in probability by Markov's inequality.
The 6th sum is non negative and has expectation bounded by $C \E[\gamma(Z, C/n)] = o(1)$, by dominated convergence, so that it converges to 0 in probability by Markov's inequality.
The 7th sum is simply $= O(1/n)$.

The minimization over $\beta$, or equivalently over $r$, corresponds in the limit to a classical setting, since the Gaussian process, as a (random) function of $r$, can be expressed as \smash{${\tilde C_1}^{1/2} r U$}, where $\tilde C_1 := \E[\loss'(Z)^2] \E[f(X)^2]$ and $U$ is standard normal, and the drift term is equal to $\tilde C_0 r^2$, $\tilde C_0 := \tfrac12 \E[\loss''(Z)]  \E[f(X)^2]$. In particular, we have established that

\begin{quote}
{\em $\sqrt{n} (\hat\beta_n - \beta^*)$ converges weakly to the normal distribution with zero mean and variance} 
$$(\E[\loss'(Z)^2]/\E[\loss''(Z)]^2) \E[f(X)^2]^{-1}.$$
\end{quote}

The minimization over $\xi$, or equivalently over $s$, corresponds in the limit to the minimum a compound Poisson process. In particular, as in \thmref{limit_discont}, we can establish that

\begin{quote}
{\em $n (\hat\xi_n - \xi^*)$ is asymptotically, and in distribution, between the two most extreme minimizers of $\sum_d N_d$.}
\end{quote}

And, similarly, 
\begin{quote}
{\em $n (\hat\nu_n - \nu^*)$ is asymptotically, and in distribution, between the two most extreme minimizers of $\sum_d \tilde N_d$.}
\end{quote}

Since the minimization over $\xi$ and $\nu$ rely, in the limit, on independent processes, $\hat\xi_n$ and $\hat\nu_n$ are independent in the asymptote. Also, these processes are asymptotically independent of the Gaussian process driving the minimization over $\beta$, as these Poisson processes only rely on $O_P(1)$ data points. Hence,

\begin{quote} 
{\em $\hat\beta_n$, $\hat\xi_n$ and $\hat\nu_n$ are asymptotically mutually independent.}
\end{quote}

\section{Variants and extensions}
\label{sec:extensions}

\subsection{Fixed design}
\label{sec:fixed}
A fixed design is most common in signal processing, and also in change point analysis.
Most, if not all, of our results can be proved with very similar tools in that context. So that there is a close correspondence with the basic setting of \eqref{model}, in the context of a fixed design we assume that 
\begin{equation}
Y_i = f(x_i - \theta^*) + Z_i, \quad i = 1, \dots, n,
\end{equation}
where $x_i = \Lambda^{-1}(i/(n+1))$, $\Lambda$ being the distribution with density $\lambda$ and $\lambda$ being as before.
That the same results apply is due to the fact that the empirical process theory behind our results extends quite naturally (and easily) to the independent-but-not-necessarily-iid setting, which is exactly the extension needed since $(x_1, Y_1), \dots, (x_n, Y_n)$ are no longer iid but are still assumed independent. The additional complexity is the fact that now the underlying distribution depends on $n$: In terms of $(x_1, Z_1), \dots, (x_n, Z_n)$, it is the product of $\delta_{x_i} \otimes \phi$ over $i = 1, \dots, n$. But this presents no particular difficulty in our case.

These results from empirical process theory enable us to understand the large-$n$ behavior of $\widehat{M}_n(\theta) - M_n(\theta)$, where
\begin{align*}
M_n(\theta) 
:= \E[\widehat{M}_n(\theta)]
= \frac1n \sum_{i=1}^n m_\theta(x_i), && 
m_\theta(x) := \E[\loss(Z + f(x-\theta^*) - f(x-\theta))].
\end{align*}
But what is the behavior of $M_n$ itself?
As is clear, $M_n$ is a Riemann sum, and when $f$ is piecewise Lipschitz, for example, and $\loss$ is as before, one can easily prove that
\begin{equation}
\sup_\theta |M_n(\theta) - M(\theta)| \le C/n,
\end{equation}
where $C$ depends $f$ and $\loss$. The limit function $M$ is defined exactly as before in \eqref{M}, and we have used the fact that the supremum is over a compact set when $f$ and $\lambda$ are compactly supported, which we assume them to be.

\subsection{Periodic template}
In the signal processing literature it is not uncommon to consider the periodic setting where the design points are effectively in a torus rather than the real line. This is equivalent to considering a template that is periodic. The torus can be taken to be the unit interval with algebra there done modulo 1. In that case, the design density $\lambda$ is simply a density with support the unit interval, and the assumption that the template $f$ is compact is waved. So that the shift is identifiable, we require that $f$ is exactly 1-periodic meaning that $f(\cdot -\theta) \ne f(\cdot-\theta^*)$ unless $\theta = \theta^*$ (remember, modulo 1).

As can be easily verified, all our results apply in that setting with only minor modifications, if any at all.

\begin{rem}
In signal processing it is common to match a template to a signal by maximizing its convolution or Pearson correlation with the signal. Considering a regular design (also very common) where $x_i = i/n$, this amounts to defining the following estimator
\begin{equation}
\label{correlation}
\hat t := \argmax_{t \in [n]} \sum_{i=1}^n f\left(\tfrac{i-t}n\right) Y_i\ .
\end{equation}
The parameter $\theta$ corresponds to $t/n$ and is here constrained to be on the grid, as is typically the case in signal processing.
This estimator, as defined, is in fact the least squares estimator since 
\begin{equation}
\sum_{i=1}^n \left(Y_i - f\left(\tfrac{i-t}n\right)\right)^2 = \sum_{i=1}^n Y_i^2 + \sum_{i=1}^n f\left(\tfrac{i-t}n\right)^2 - 2 \sum_{i=1}^n f\left(\tfrac{i-t}n\right) Y_i\ ,
\end{equation}
and the first two sums on right-hand side do not depend on $t$. (For the second one, this is because $f$ is 1-periodic.)
\end{rem}

\subsection{Agnostic setting}
\label{sec:agnostic}
In practice, the model \eqref{model} could be completely wrong. Suppose, however, that the following holds
\begin{equation}
Y_i = g(X_i) + Z_i,
\end{equation}
with otherwise the same assumptions on the design and noise.
(This type of situation is sometimes referred to as a `mispecified model' or `improper learning'.)
In that case, the function $\tilde m_\theta(x, z)$ of \eqref{m_tilde}, which plays a crucial role in our derivations, takes the form
\begin{equation}
\tilde m_\theta(x, z) 
= \loss(z + g(x) - f(x - \theta)).
\end{equation}
It then becomes quite clear that most, if not all of our results extend to this case, if it is true that 
\begin{equation}
\theta^* := \argmin_\theta \E[\tilde m_\theta(X,Z)]
\end{equation}
is uniquely defined. Almost no conditions on $g$ are required other than, say, boundedness.

A case in point is where $g$, a function on the unit interval, is known to have a single discontinuity, say at $d$. Note that $d$ is unknown. Then one may want to use a stump, $f(x) = a \IND{x > 0}$, to locate the discontinuity. If one uses the squared error loss for example, it happens that $\theta^*$ is exactly the location of the discontinuity if it is the case that $g(x) < a/2$ when $x < d$ and $g(x) > a/2$ when $x > d$. And using an additional scale parameter as in \eqref{location_scale} frees one from having to guess a good value for $a$. The point here is that a simple, parameteric model can be used to locate a feature of interest (a discontinuity in this example) in an otherwise `nonparametric' setting.

\subsection{Semi-parametric models}
\label{sec:semiparametric}
The models we discussed in this paper are all parametric. This was intentional, to keep the exposition focused. However, empirical process theory has developed an arsenal of tools for semi-parametric models \cite[Ch 25]{van2000asymptotic, bickel1998efficient}. An emblematic example in the context of matching a template would be a class of piecewise Lipschitz functions: the parametric component of the model would be the location of the knots defining the intervals where the template is Lipschitz, while the nonparametric component would be the Lipschitz functions defining the template on these intervals.  \cite{korostelev1988minimax} considered this problem in the continuous white noise model and showed that it is possible to locate the location of a discontinuity to what corresponds to $O(1/n)$ accuracy in our context by simply looking for unusually large slopes. This is similar to the approach we allude to at the end of \secref{agnostic}. In any case, the same rate of convergence can be obtained for the M-estimator using tools similar to the ones we used in this paper; see \cite[Sec 5.8.1]{van2000asymptotic}.

\subsection{Alignment/registration}
We mentioned in the \hyperref[sec:intro]{Introduction} the close relationship with the problem of signal alignment (aka registration). A typical setting, that most resembles our basic shift model \eqref{model}, is where
\begin{equation}
Y_{ij} = f(X_{ij} - \theta_j) + Z_{ij}, \quad i \in [n], \quad j \in [s],
\end{equation}
so that instead of a single sample of size $n$ from \eqref{model}, we are provided with $s$ such samples, all shifted differently. Invariably, the function $f$ is unknown, and not much needs to be assumed about $f$ to estimate the shifts (i.e., align the signals) with $\sqrt{n}$ precision. Indeed, with some kernel smoothing, it is possible to design an estimator that is $\sqrt{n}$-consistent, as shown in \citep{hardle1990semiparametric, gamboa2007semi, vimond2010efficient, trigano2011semiparametric}.
Although this convergence rate can be achieved with hardly any assumption of $f$, our work here, which we placed in the context of the change point analysis literature, would indicate that this rate is suboptimal when $f$ has a discontinuity. 

\subsection{Concentration bounds}
The rate of convergence and the limit distribution, in each case, was established under very mild assumptions on the noise distribution. In particular, in terms of tail decay, we only assumed that it was sufficient for the expected loss to be finite at the true value of the parameter, i.e., $\E[\loss(Z)] < \infty$. As we discussed earlier, for the losses considered here (and almost everywhere else in the literature), this implies that $\E[\loss'(Z + c)^2] < \infty$ for every $c>0$, which is all that was needed.

When some tail decay is assumed, then concentration bounds can be derived. The simplest and most straightforward example of that is when the loss function is bounded, as is the case for the Tukey loss. Then, under no additional assumption on the noise distribution, the M-estimator enjoys sub-Gaussian concentration.
This can be seen from combining Talagrand's inequality for empirical processes \citep{talagrand1996new} and \cite[Th 3.2.5]{van1996weak} --- the latter being a more general version of \cite[Th 5.52]{van2000asymptotic}, which is the result at the foundation of \lemref{rate}.

\section{Numerical experiments}
\label{sec:numerics}

We performed some basic experiments to probe our theory. We present the result of these experiments below, subdivided into `smooth' and `non-smooth' settings. The design distribution is the uniform distribution on the unit interval. We consider three noise distributions: Gaussian, Student $t$-distribution with 3 degrees of freedom, and Cauchy. And we consider four losses: squared error, absolute-value, Huber, and Tukey.
We assume throughout that $\theta^* = 0$.

\subsection{Smooth setting}
We consider the following two filters:
\beq
\text{Template $A$: } \;\;f(x)=
\begin{cases}
4x-1& 0.25\leq x<0.5,\\
3-4x& 0.5 \leq x<0.75,\\
0& \text{otherwise}
\end{cases}
\eeq
and 
\beq
\text{Template $B$: }\;\;f(x) = \max\{0, (1-(4x-2)^2)^3\}.
\eeq\\
Template $A$ is Lipschitz, while Template $B$ is even smoother. Our motivation for considering Template $B$ is to verify that more smoothness does not change things much (as predicted by our theory). See \figref{Fig-1} for an illustration. 

\begin{figure}[htbp]
\centering 
\subfigure[Template $A$]{
\label{fig:Fig-1-1}
\includegraphics[width=0.43\textwidth]{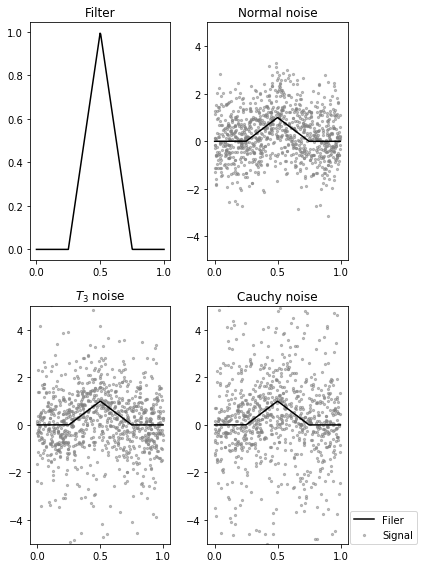}}
\subfigure[Template $B$]{
\label{fig:Fig-1-2}
\includegraphics[width=0.43\textwidth]{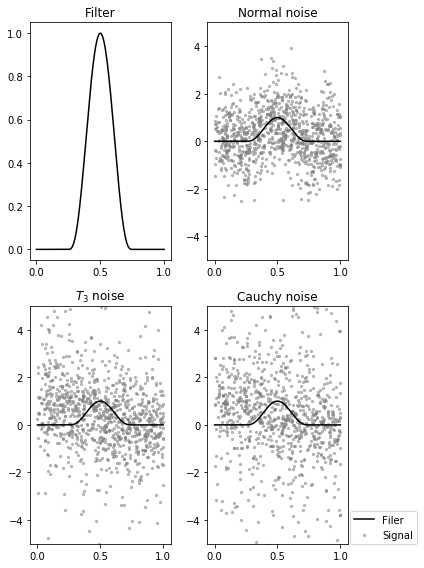}}
\caption{Templates and noisy signals. Although the sample size is $n = 10000$, for the sake of clarity, we only include $1000$ points and limit the range of the y-axis to $[-5,5]$.}
\label{fig:Fig-1}
\end{figure}

We used a sample of size $n = 10000$ and repeated each scenario (combination of template, noise distribution, and loss function) $200$ times.  We show the mean of $|\sqrt{n}(\hat{\theta_n}-\theta^*)|$ in Table~\ref{table1}. Box plots of estimation error $|\hat{\theta_n}-\theta^*|$ are shown in \figref{Fig-5} and \figref{Fig-6}. The distribution of $\sqrt{n}(\hat{\theta_n}-\theta^*)$ is plotted in \figref{Fig-2} and \figref{Fig-3} as an histogram overlaid with the Gaussian distribution predicted by our asymptotic calculations. Out of curiosity, we also looked at the setting where the noise is Cauchy and yet we use squared error as loss in \figref{Fig-4}.
The result of these experiments are by and large congruent with our theory. In particular, there is no noticeable difference between the two templates. 

\begin{table}[htbp]
\centering
\begin{tabular}{ cccccc } 
\toprule  
\multirow{2}{*}{Template} & \multirow{2}{*}{Noise} & \multicolumn{4}{c}{Loss Function}\\
&& Squared error  &Absolute-value &Huber & Tukey\\
\midrule
\multirow{3}{*}{Template $A$} & Normal & 0.2791  & 0.3705 &0.2620 & 0.3301\\ 
& $T_3$  & 0.5168 & 0.3496 &0.3634 & 0.5053\\ 
& Cauchy  & 28.2535 & 0.4355 & 0.5203 & 0.4113\\ 
\midrule
\multirow{3}{*}{Template $B$} & Normal & 0.2511  & 0.3326 & 0.2453 & 0.2918\\ 
& $T_3$  & 0.4524 & 0.3236 &0.3293 & 0.3286\\ 
& Cauchy  & 48.2211 & 0.3958 & 0.3982 & 0.3462\\ 
\bottomrule
\end{tabular}
\caption{Mean of $|\sqrt{n}(\hat{\theta_n}-\theta^*)|$ with $n = 10000$ over $200$ repeats.}
\label{table1}
\end{table}

\begin{figure}[htbp]
\centering 
\includegraphics[width=12cm]{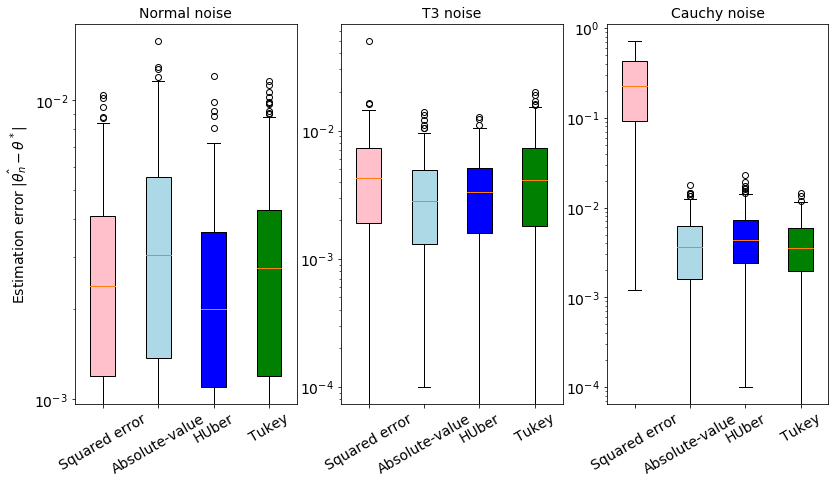}
\caption{Box plot of estimation error $|\hat{\theta_n}-\theta^*|$ for Template $A$}
\label{fig:Fig-5}
\end{figure}

\begin{figure}[htbp]
\centering 
\includegraphics[width=12cm]{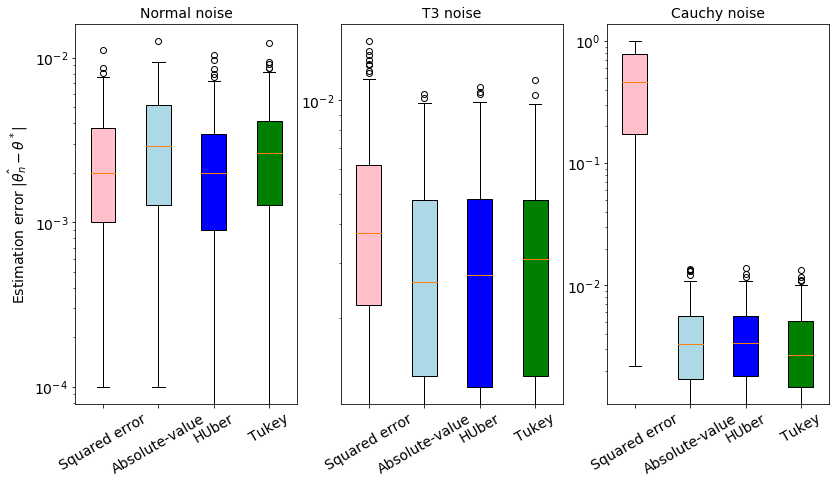}
\caption{Box plot of estimation error $|\hat{\theta_n}-\theta^*|$ for Template $B$}
\label{fig:Fig-6}
\end{figure}

\begin{figure}[htbp]
\centering 
\subfigure[Normal noise]{
\label{fig:Fig-2-1}
\includegraphics[width=0.85\textwidth]{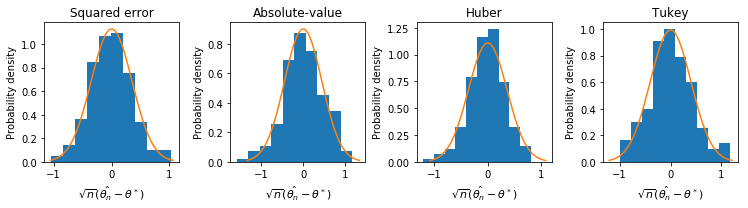}}
\subfigure[$T_3$ noise]{
\label{fig:Fig-2-2}
\includegraphics[width=0.85\textwidth]{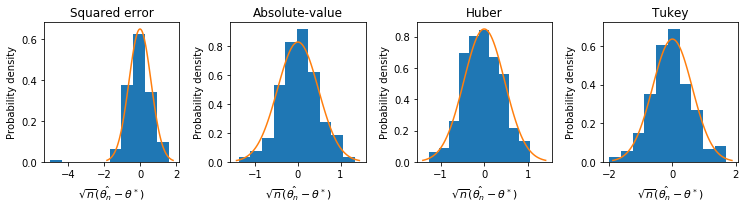}}
\subfigure[Cauchy noise]{
\label{fig:Fig-2-3}
\includegraphics[width=0.85\textwidth]{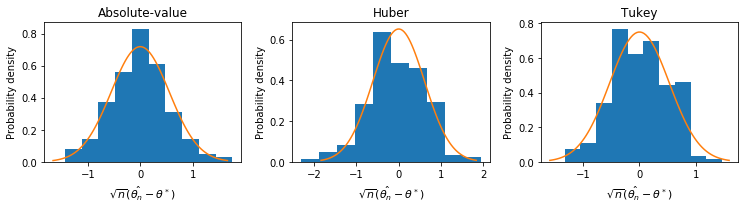}}
\caption{Distribution under Template $A$. The histogram presents the distribution of $\sqrt{n}(\hat{\theta_n}-\theta^*)$. The orange bell-shaped curve is the density of normal distribution predicted by the theory.}
\label{fig:Fig-2}
\end{figure}

\begin{figure}[htbp]
\centering 
\subfigure[Normal noise]{
\label{fig:Fig-3-1}
\includegraphics[width=0.85\textwidth]{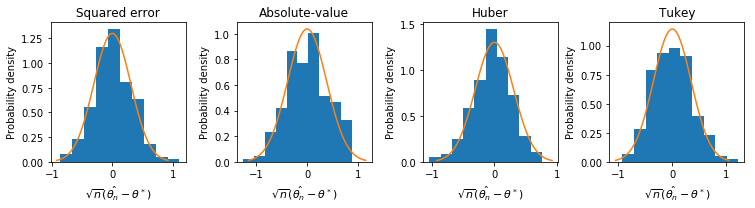}}
\subfigure[$T_3$ noise]{
\label{fig:Fig-3-2}
\includegraphics[width=0.85\textwidth]{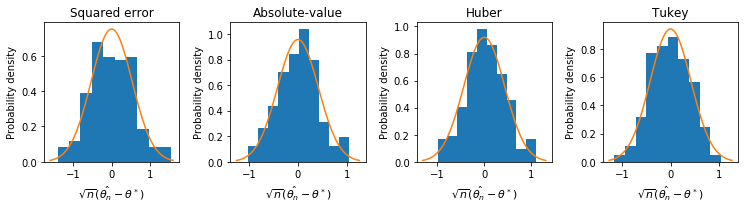}}
\subfigure[Cauchy noise]{
\label{fig:Fig-3-3}
\includegraphics[width=0.85\textwidth]{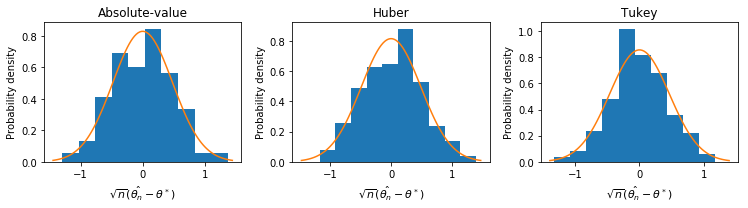}}
\caption{Distribution under Template $B$. The histogram presents the distribution of $\sqrt{n}(\hat{\theta_n}-\theta^*)$. The orange bell-shaped curve is the density of normal distribution predicted by the theory.}
\label{fig:Fig-3}
\end{figure}

\begin{figure}[htbp]
\centering 
\includegraphics[width=14cm]{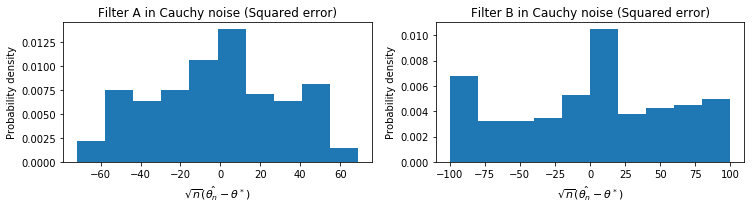}
\caption{Distribution of $\sqrt{n}(\hat{\theta_n}-\theta^*)$, for Templates $A$ and $B$, with squared error loss under the Cauchy distribution as noise distribution. Note that our theory is silent on this setting.}
\label{fig:Fig-4}
\end{figure}

We also ran experiments with varying sample size $n$. We focused on Template $A$ with absolute-value loss and $T_3$ noise, to investigate the accuracy of the asymptotic distribution as $n$ increases. As sample size we used $n \in \{100, 500, 1000, 5000, 10000\}$. To have a finer sense of the accuracy, we used $1000$ repeats. We show the mean of $|\sqrt{n}(\hat{\theta_n}-\theta^*)|$ in Table~\ref{table2}, the box plot of $|\hat{\theta_n}-\theta^*|$ in \figref{Fig-8}, and the histogram of $\sqrt{n}(\hat{\theta_n}-\theta^*)$ in \figref{Fig-7}.

\begin{table}[htbp]
\centering
\begin{tabular}{ cccccc } 
\toprule  
n  & 100 & 500 & 1000 & 5000 & 10000\\
\midrule
Mean of $|\sqrt{n}(\hat{\theta_n}-\theta^*)|$ & 0.5704 & 0.4286 & 0.4278 & 0.3766 & 0.3889\\
\bottomrule
\end{tabular}
\caption{Mean of $|\sqrt{n}(\hat{\theta_n}-\theta^*)|$ for Template $A$ with absolute-value loss and $T_3$ noise based on $1000$ repeats.}
\label{table2}
\end{table}

\begin{figure}[htbp]
\centering 
\includegraphics[width=10cm]{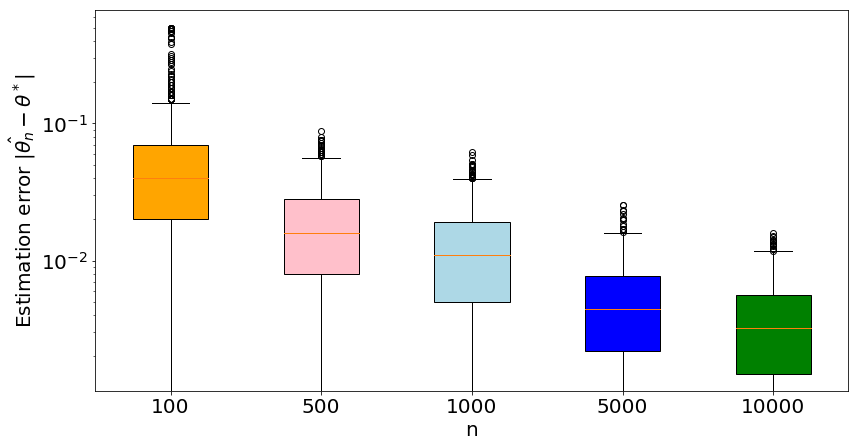}
\caption{Box plot of the estimation error $|\hat{\theta_n}-\theta^*|$ for Template $A$ with absolute-value loss and $T_3$ noise based on $1000$ repeats.}
\label{fig:Fig-8}
\end{figure}

\begin{figure}[htbp]
\centering 
\includegraphics[width=15cm]{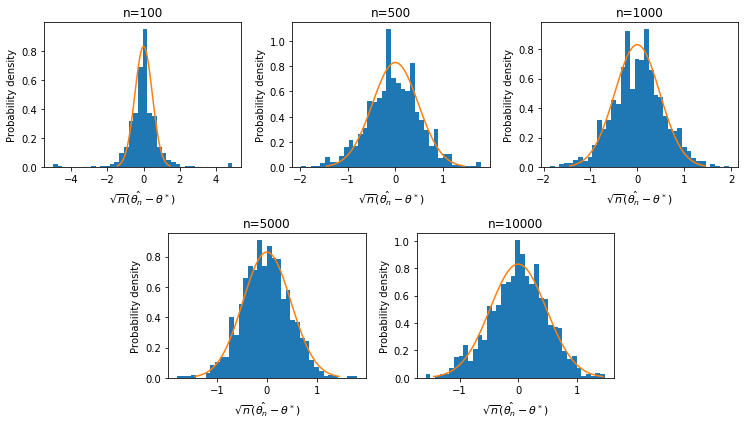}
\caption{Distribution of $\sqrt{n}(\hat{\theta_n}-\theta^*)$ for Template $A$ with absolute-value loss and $T_3$ noise based on $1000$ repeats.}
\label{fig:Fig-7}
\end{figure}

In \figref{Fig-7}, as $n \rightarrow \infty$, the distributions of $\sqrt{n}(\hat{\theta_n}-\theta^*)$ approaches to the normal distribution shown in the theory. \figref{Fig-8} indicates   estimation error $|\hat{\theta_n}-\theta^*|$ decreases as $n$ increases.

\newpage
\subsection{Non-smooth setting}
We consider the following three filters:
\beq
\text{Template $C$: } \;\;f(x)=
\begin{cases}
1& 0.25\leq x<0.75,\\
0& \text{otherwise,}
\end{cases}
\eeq

\beq
\text{Template $D$: } \;\;f(x) = 
\begin{cases}
1& 0.2\leq x<0.4 \;\;\text{or}\;\; 0.6 \leq x <0.8,\\
0& \text{otherwise}
\end{cases}
\eeq
and
\beq
\text{Template $E$: } \;\;f(x) = 
\begin{cases}
4x-1& 0.25 \leq x <0.5,\\
0& \text{otherwise.}
\end{cases}
\eeq\\
Template $C$ is a piecewise constant function with two discontinuities. Template $D$ is another piecewise constant function with more discontinuities. Template $E$ is a half-triangle with one discontinuity. See \figref{Fig-9} for an illustration. Our theory predict that what matters is the number and size of the discontinuities.

\begin{figure}[htbp]
\centering 
\subfigure[Template $C$]{
\label{fig:Fig-9-1}
\includegraphics[width=0.43\textwidth]{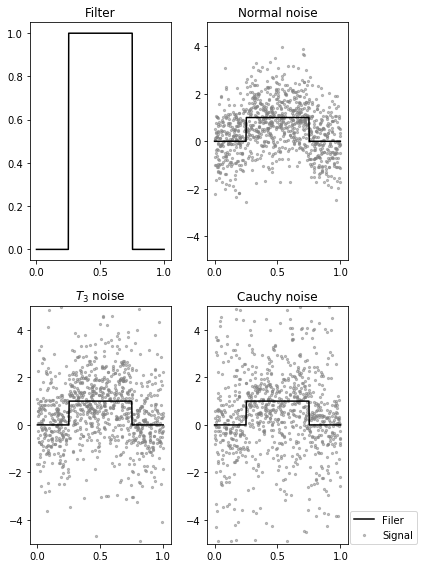}}
\subfigure[Template $D$]{
\label{fig:Fig-9-2}
\includegraphics[width=0.43\textwidth]{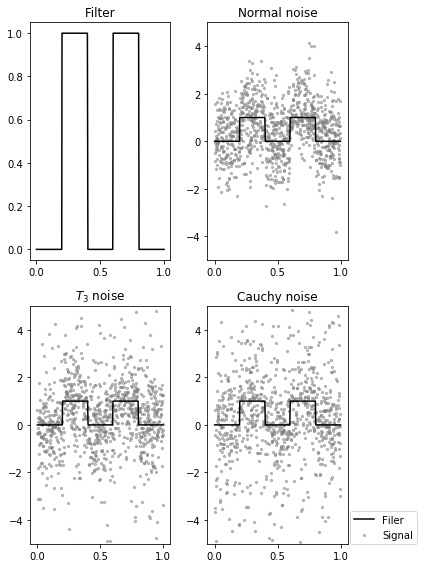}}
\subfigure[Template $E$]{
\label{fig:Fig-9-3}
\includegraphics[width=0.43\textwidth]{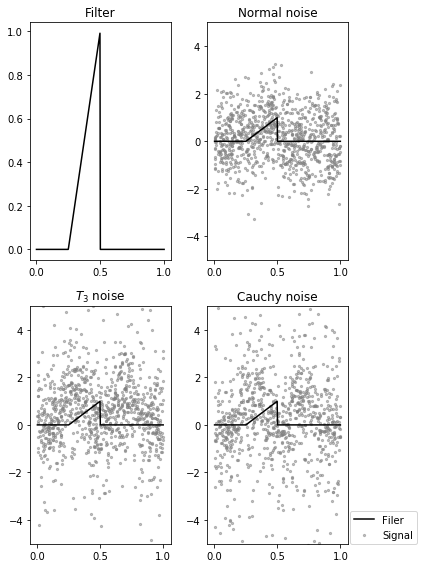}}
\caption{Templates and noisy signals. Although the sample size is $n = 10000$, for the sake of clarity, we only include $1000$ points and limit the range of the y-axis to $[-5,5]$.}
\label{fig:Fig-9}
\end{figure}

We show the mean of $|n(\hat{\theta_n}-\theta^*)|$ in Table~\ref{table3}, the estimation error $|\hat{\theta_n}-\theta^*|$ is shown as a box plot in Figures~\ref{fig:Fig-14}, \ref{fig:Fig-15}, and \ref{fig:Fig-16}, and the distribution of $n(\hat{\theta_n}-\theta^*)$ is plotted in Figures~\ref{fig:Fig-10}, \ref{fig:Fig-11}, and \ref{fig:Fig-12}.

\begin{table}[htbp]
\centering
\begin{tabular}{ cccccc } 
\toprule  
\multirow{2}{*}{Template} & \multirow{2}{*}{Noise} & \multicolumn{4}{c}{Loss Function}\\
&& Squared error  &Absolute-value &Huber & Tukey\\
\midrule
\multirow{3}{*}{Template $C$} & Normal & 1.876  & 1.849 & 1.868 & 2.120\\ 
& $T_3$  & 3.889 & 2.739 & 3.550 & 2.761\\ 
& Cauchy  & 2326.050 & 4.362 & 3.310 & 4.204\\ 
\midrule
\multirow{3}{*}{Template $D$} & Normal & 0.868  & 1.080 & 0.897 & 0.877\\ 
& $T_3$  & 1.802 & 1.220 &1.344 & 1.813\\ 
& Cauchy  & 2766.000 & 2.278 &2.078 & 1.862\\ 
\midrule
\multirow{3}{*}{Template $E$} & Normal & 3.498  & 3.798 & 3.414 & 4.008\\ 
& $T_3$ & 7.307 & 5.366 & 5.720 & 5.734\\ 
& Cauchy  & 4798.370 & 9.102 &8.839 & 7.759\\ 
\bottomrule
\end{tabular}
\caption{Mean of $|n(\hat{\theta_n}-\theta^*)|$ based on $200$ repeats.}
\label{table3}
\end{table}

\begin{figure}[htbp]
\centering 
\includegraphics[width=13cm]{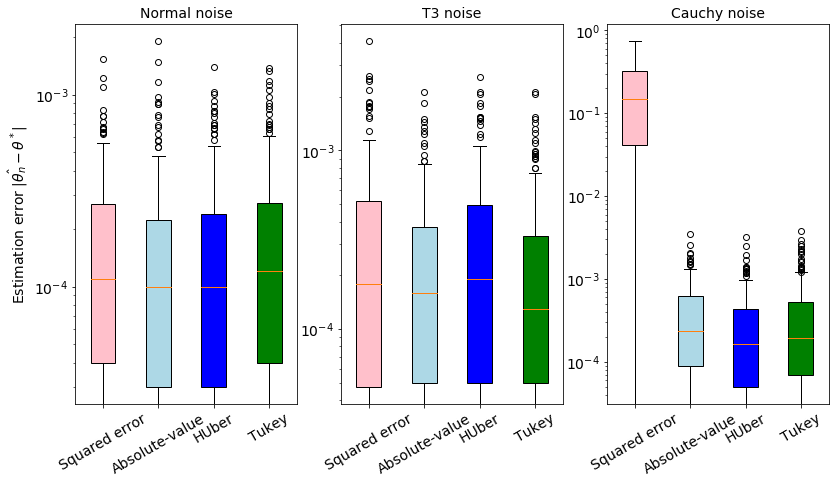}
\caption{Box plot of estimation error $|\hat{\theta_n}-\theta^*|$ for Template $C$ based on $200$ repeats.}
\label{fig:Fig-14}
\end{figure}

\begin{figure}[htbp]
\centering 
\includegraphics[width=13cm]{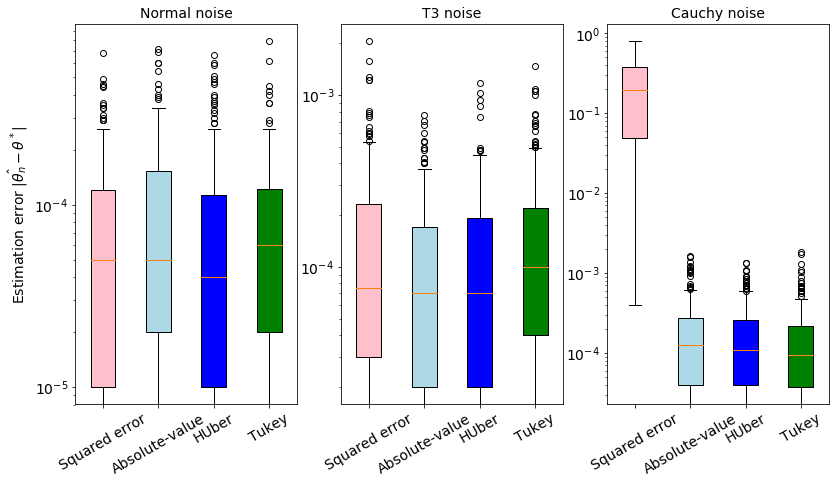}
\caption{Box plot of estimation error $|\hat{\theta_n}-\theta^*|$ for Template $D$ based on $200$ repeats.}
\label{fig:Fig-15}
\end{figure}

\begin{figure}[htbp]
\centering 
\includegraphics[width=13cm]{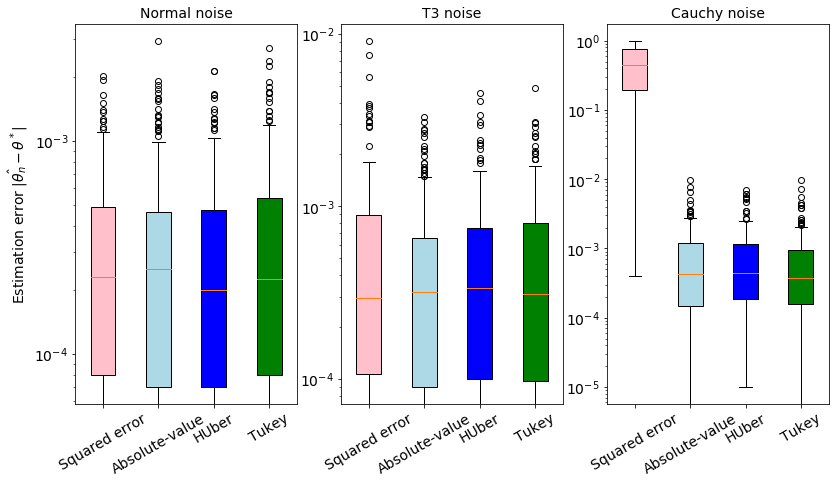}
\caption{Box plot of estimation error $|\hat{\theta_n}-\theta^*|$ for Template $E$ based on $200$ repeats.}
\label{fig:Fig-16}
\end{figure}

\begin{figure}[htbp]
\centering 
\subfigure[Normal noise]{
\label{fig:Fig-10-1}
\includegraphics[width=0.85\textwidth]{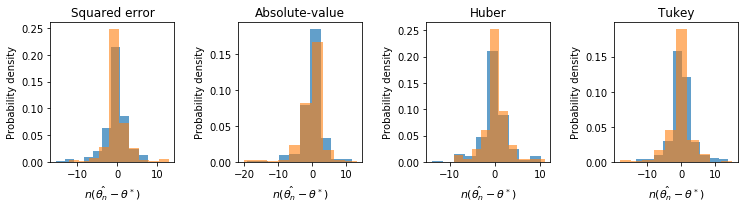}}
\subfigure[$T_3$ noise]{
\label{fig:Fig-10-2}
\includegraphics[width=0.85\textwidth]{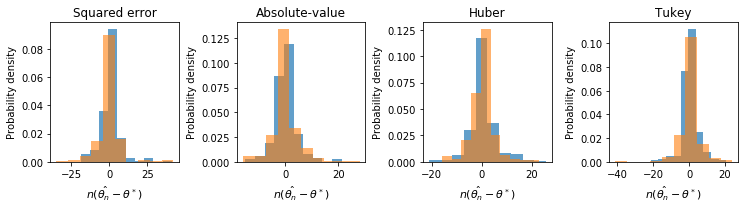}}
\subfigure[Cauchy noise]{
\label{fig:Fig-10-3}
\includegraphics[width=0.85\textwidth]{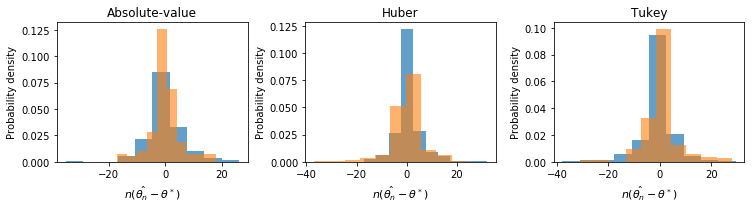}}
\caption{The blue histogram represents the distribution of $n(\hat{\theta_n}-\theta^*)$ for Template $C$. The orange histograms shows the simulated density of the midpoint of the minimizer interval of the marked Poisson process predicted by the theory. Based on $200$ repeats.}
\label{fig:Fig-10}
\end{figure}

\begin{figure}[htbp]
\centering 
\subfigure[Normal noise]{
\label{fig:Fig-11-1}
\includegraphics[width=0.85\textwidth]{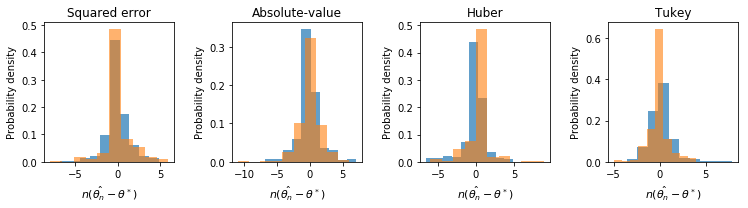}}
\subfigure[$T_3$ noise]{
\label{fig:Fig-11-2}
\includegraphics[width=0.85\textwidth]{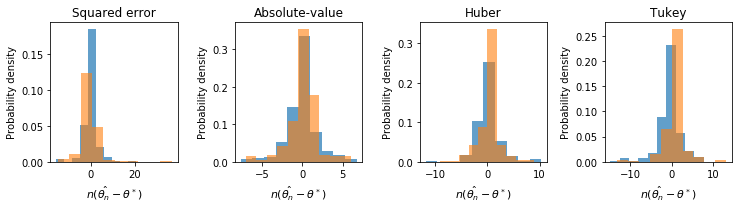}}
\subfigure[Cauchy noise]{
\label{fig:Fig-11-3}
\includegraphics[width=0.85\textwidth]{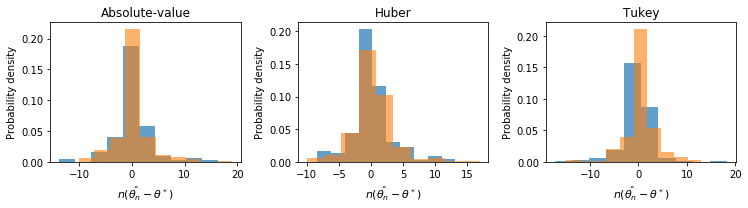}}
\caption{The blue histogram represents the distribution of $n(\hat{\theta_n}-\theta^*)$ for Template $D$. The orange histograms shows the simulated density of the midpoint of the minimizer interval of the marked Poisson process predicted by the theory. Based on $200$ repeats.}
\label{fig:Fig-11}
\end{figure}

\begin{figure}[htbp]
\centering 
\subfigure[Normal noise]{
\label{fig:Fig-12-1}
\includegraphics[width=0.85\textwidth]{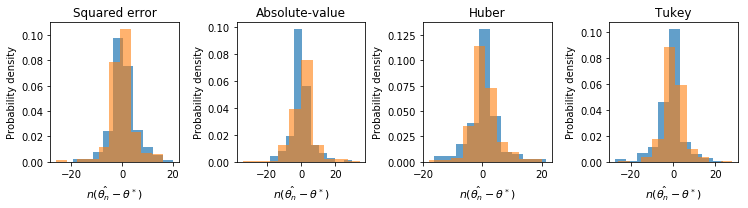}}
\subfigure[$T_3$ noise]{
\label{fig:Fig-12-2}
\includegraphics[width=0.85\textwidth]{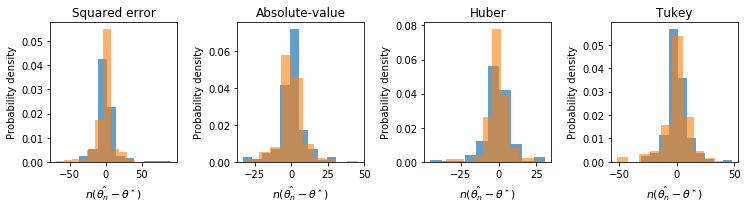}}
\subfigure[Cauchy noise]{
\label{fig:Fig-12-3}
\includegraphics[width=0.85\textwidth]{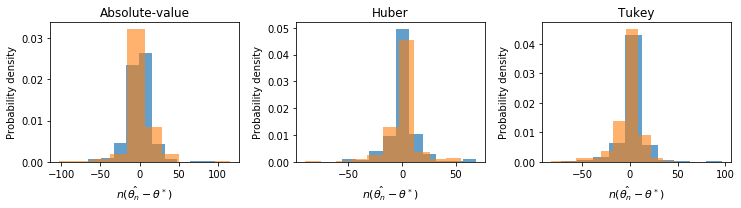}}
\caption{The blue histogram represents the distribution of $n(\hat{\theta_n}-\theta^*)$ for Template $E$. The orange histograms shows the simulated density of the midpoint of the minimizer interval of the marked Poisson process predicted by the theory. Based on $200$ repeats.}
\label{fig:Fig-12}
\end{figure}


We also ran some experiments with varying $n$ as before. We focused on Template E with the absolute-value loss under the $T_3$ noise distribution. The mean of $|n(\hat{\theta_n}-\theta^*)|$ is reported in Table~\ref{table4}, a box plot of the estimation error $|\hat{\theta_n}-\theta^*|$ is given in \figref{Fig-18}, and the distribution of $n(\hat{\theta_n}-\theta^*)$ is plotted in \figref{Fig-17}.

\begin{table}[htbp]
\centering
\begin{tabular}{ cccccc } 
\toprule  
n & 100 & 500 & 1000 & 5000 & 10000\\
\midrule
Mean of $|n(\hat{\theta_n}-\theta^*)|$ & 10.1872 & 5.0808 & 5.7420 & 5.7146 & 5.0682\\
\bottomrule
\end{tabular}
\caption{Mean of $|n(\hat{\theta_n}-\theta^*)|$ for Template $E$ with absolute-value loss under $T_3$ noise based on $1000$ repeats.}
\label{table4}
\end{table}

\begin{figure}[htbp]
\centering 
\includegraphics[width=10cm]{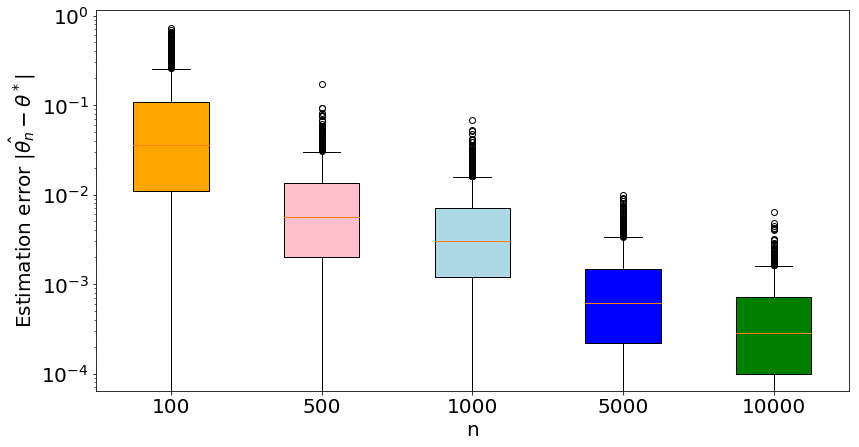}
\caption{Box plot of estimation error $|\hat{\theta_n}-\theta^*|$ for Template $E$ with absolute-value loss under $T_3$ noise based on $1000$ repeats.}
\label{fig:Fig-18}
\end{figure}

\begin{figure}[htbp]
\centering 
\includegraphics[width=15cm]{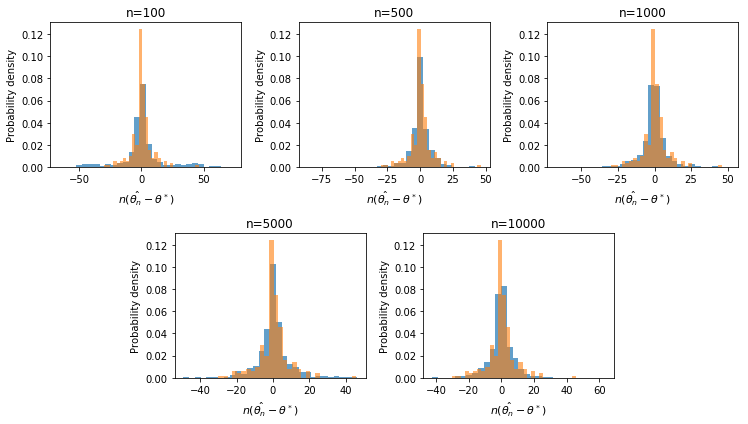}
\caption{The blue histograms present the distribution of $n(\hat{\theta_n}-\theta^*)$ for Template $E$ with absolute-value loss and $T_3$ noise. The orange histograms show the simulated density of marked Poisson process predicted by the theory. Based on $1000$ repeats.}
\label{fig:Fig-17}
\end{figure}

\subsection*{Acknowledgments}
We are particularly grateful to Lutz D\"umbgen for providing some illuminating intuition for the change point setting where the template is discontinuous; to Jason Schweinsberg for clarifying the notion of weak convergence towards a Poisson process; and to Nicolas Verzelen for important pointers to the literature and for his general support throughout the writing of the paper.
We are also thankful to Ioan Bejenaru, Richard Nickl, and Wenxin Zhou for helpful discussions and pointers.   

\bibliographystyle{chicago}
\bibliography{ref}

\appendix
\section{Miscellanea}

\subsection{Auxiliary results}

\begin{lem}
\label{lem:consistent}
Suppose Assumptions \ref{asp:template}, \ref{asp:noise}, and \ref{asp:loss} are in place. Then, \aspref{M} is fulfilled if {\em (i)} the loss is strictly convex, or {\em (ii)} the noise distribution is unimodal and the loss is either Lipschitz or has a Lipschitz derivative and $\phi$ has finite first moment. \aspref{M} is also fulfilled for the Huber loss and the absolute-value loss if the noise distribution has a unique median.
\end{lem}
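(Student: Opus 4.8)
The plan is to collapse the whole statement onto the one-dimensional profile
$$\psi(a) := \E[\loss(Z+a)] = \int \loss(z+a)\,\phi(z)\,\d z,$$
which is finite for every $a$ under the moment condition attached to each loss (automatic for bounded losses, finite first/second moment for a Lipschitz/Lipschitz-derivative loss, exactly as discussed after \eqref{Z0}); this already gives the well-definedness of $M$ demanded by \aspref{M}. Writing $g(x,\theta) := f(x-\theta^*)-f(x-\theta)$, Fubini yields $M(\theta) = \int \psi(g(x,\theta))\,\lambda(x)\,\d x$, so in particular $M(\theta^*)=\psi(0)$, and the change of variable $z\mapsto -z$ together with $\loss$ and $\phi$ both even shows $\psi$ is even. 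The entire argument then reduces to a single claim: in each scenario, $\psi$ has a \emph{strict} global minimum at $0$, i.e. $\psi(a)>\psi(0)$ for all $a\neq 0$.

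Granting that claim, I would finish as follows. Since $\psi\ge\psi(0)$ with equality only at $0$, the integrand $\psi(g(x,\theta))-\psi(0)$ is nonnegative, so $M(\theta)\ge M(\theta^*)$, and $M(\theta)=M(\theta^*)$ forces $g(x,\theta)=0$ for $\lambda$-almost every $x$, i.e. $f(\cdot-\theta)=f(\cdot-\theta^*)$ $\lambda$-a.e.; by the identifiability part of \aspref{template} this happens only for $\theta=\theta^*$. For the separation requirement $\inf\{M(\theta):|\theta-\theta^*|\ge\delta\}>M(\theta^*)$, I would split the line at $\pm 2A$, where $[-A,A]$ contains the supports of $f$ and $\lambda$: on the compact set $\{|\theta-\theta^*|\ge\delta\}\cap[-2A,2A]$ the continuous function $M$ attains its infimum, which exceeds $M(\theta^*)$ by the uniqueness just proved, while for $|\theta|>2A$ one has $f(\cdot-\theta)\equiv 0$ on the support of $\lambda$, so $M(\theta)=\int\psi(f(x-\theta^*))\,\lambda(x)\,\d x$ is a constant strictly above $\psi(0)=M(\theta^*)$ (using $f\not\equiv 0$ on the support of $\lambda$). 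Continuity of $M$ follows from dominated convergence, the finitely many discontinuities of $f$ being $\lambda$-null.

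It remains to prove the strict-minimum claim, which is where the three hypotheses enter and which I expect to be the crux. \emph{Case (i).} If $\loss$ is strictly convex then so is $\psi$ (strict convexity of $a\mapsto\loss(a+z)$ survives the expectation over $z$), and for a strictly convex even function $\psi(0)=\psi(\tfrac{a+(-a)}{2})<\tfrac12(\psi(a)+\psi(-a))=\psi(a)$ for $a\ne 0$. \emph{Case (ii).} When $\loss'$ exists, splitting at $0$ and using that $\loss'$ is odd gives, for $a>0$,
$$\psi'(a)=\int_0^\infty \loss'(u)\,[\phi(u-a)-\phi(u+a)]\,\d u\ \ge\ 0,$$
since $\loss'\ge 0$ on $(0,\infty)$ and unimodality of $\phi$ (non-increasing on $[0,\infty)$) makes each bracket nonnegative; strictness of the minimum then follows because $\loss'$ is positive on a set meeting the interior of the support of $\phi$, which forbids a flat stretch of $\psi$ at $0$. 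For a merely Lipschitz loss the same computation applies with $\sign$ in place of $\loss'$. \emph{Huber and absolute value under a unique median.} Here $\psi$ need not be strictly convex, so I compute $\psi'$ directly: for the absolute value $\psi'(a)=\P(Z>-a)-\P(Z<-a)$ vanishes precisely when $-a$ is a median of $Z$, hence only at $a=0$ by symmetry and uniqueness of the median; for the Huber loss $\psi'(a)=\E[\operatorname{clamp}_c(Z+a)]$ with $\operatorname{clamp}_c$ odd and strictly increasing on $(-c,c)$, and the same median argument isolates $a=0$ as the sole zero. The main difficulty throughout is exactly this step — turning the qualitative hypotheses (strict convexity, unimodality, a unique median) into the \emph{strict} separation of $\psi(a)$ from $\psi(0)$, as the uniqueness-of-minimizer argument collapses without strictness.
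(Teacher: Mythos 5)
Your proposal follows essentially the same route as the paper's proof: the reduction of \aspref{M} to the one-variable profile $\psi(a):=\E[\loss(Z+a)]$ (the paper's $g$), the evenness-plus-convexity argument $\psi(0)<\tfrac12(\psi(a)+\psi(-a))=\psi(a)$ in the strictly convex case, and the unimodality computation $\psi'(a)=\int_0^\infty \loss'(u)\{\phi(u-a)-\phi(u+a)\}\,\d u\ge 0$ are all exactly the paper's steps, and your treatment of the separation requirement (compactness of $[-2A,2A]$, continuity of $M$ by dominated convergence, constancy of $M$ beyond $\pm 2A$) matches the paper's continuity-plus-limit-at-infinity argument. The only genuine divergence is the Huber/absolute-value case under a unique median: you differentiate $\psi$ and identify its zeros with medians of $Z$, whereas the paper stays inside the convexity inequality and characterizes its equality case (no mass of $Z$ in $(-c-a,c+a)$ for Huber, in $[-a,a]$ for the absolute value), which would force the median to be non-unique. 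Both arguments are valid and of comparable difficulty; yours has the small advantage of reusing the derivative formalism of case (ii).

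One caution, which applies equally to the paper's own proof: your justification of strictness in case (ii) --- that ``$\loss'$ is positive on a set meeting the interior of the support of $\phi$, which forbids a flat stretch of $\psi$ at $0$'' --- is not sufficient as stated, just as the paper's ``the uniqueness would then come from the fact that $g$ is not constant'' is not. Take the Tukey loss and $\phi$ uniform on $[-M,M]$ with $M>c$: then $\loss'$ vanishes off $[-c,c]$, while for $0<a\le M-c$ the bracket $\phi(u-a)-\phi(u+a)$ is supported on $(M-a,M+a]$, so the integrand vanishes identically and $\psi'(a)=0$; hence $\psi$ is flat on $[0,M-c]$ even though $\loss'>0$ on $(0,c)$, which does meet the interior of the support of $\phi$. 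In this situation $M(\theta)=M(\theta^*)$ for all $\theta$ with $\sup_x|f(x-\theta)-f(x-\theta^*)|\le M-c$, so the uniqueness demanded by \aspref{M} genuinely fails: the obstruction is not a missing argument but an edge case of the statement itself (flat-topped unimodal noise combined with a loss whose derivative has compact support). Closing it requires an extra hypothesis, e.g.\ $\loss'>0$ almost everywhere on $(0,\infty)$ (true for Huber and the absolute value) or $\phi$ strictly decreasing on its support, under which your overlap argument does go through. You correctly identified this step as the crux; your patch, like the paper's, does not close it.
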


\begin{proof}
Assume $\theta^* = 0$ without loss of generality.
For $x$ and $y$ given, we let $z = y - f(x)$.

The function $\theta \mapsto m_\theta(x,y)$ is piecewise continuous under our assumptions. Moreover, it is dominated since, by the properties of $\loss$,
\begin{align*}
m_\theta(x, y)
&= \loss(y - f(x-\theta)) \\
&= \loss(y - f(x) + f(x) - f(x-\theta)) \\
&\le \loss(|y - f(x)| + |f(x) - f(x-\theta)|) \\
&\le \overline{m}(x, y) := \loss(|y - f(x)| + 2 |f|_\infty),
\end{align*}
and $\E[\overline{m}(X,Y)] < \infty$ by \eqref{Z0} and the fact that the loss is slowly increasing.
Hence, $M$ is continuous by dominated convergence.
In addition, by the fact that $f$ is compactly supported, $m_\theta(x,y) \to \loss(y)$ when $\theta \to \pm \infty$, and so we also have $M(\theta) \to M(\infty) := \E[\loss(Y)]$ as $\theta \to \pm\infty$.
We have
\begin{align*}
M(\theta) = \E[\loss(Z + f(X) - f(X-\theta))], &&
M(\infty) = \E[\loss(Z + f(X))].
\end{align*}
The identifiability condition in \aspref{template} implies that $\P(f(X) - f(X-\theta) \ne 0) > 0$ when $\theta \ne 0$ and $\P(f(X) \ne 0) > 0$, so by conditioning on $X$, taking into account that $X$ and $Z$ are independent, it suffices to prove that $g(a) := \E[\loss(Z+a)]$ achieves its minimum uniquely at $a = 0$. Since $g$ is even, it suffices to prove that $g(a) > g(0)$ for any $a > 0$.

{\em Assume $\loss$ is convex.}
Then
\begin{align*}
g(a)
= \E[\loss(Z+a)]
&= \E[\loss(Z-a)] \\
&= \E[\tfrac12 \loss(Z+a) + \tfrac12 \loss(Z-a)] \\ 
&\ge \E[\loss(Z)]
= g(0),
\end{align*}
where in the second equality we used the fact that the distribution of $Z$ is symmetric about 0 and that $\loss$ is even, and in the inequality we used the fact that $\loss$ is convex.
That inequality is strict when the loss is strictly convex. This covers the squared error loss, for example.

For the Huber loss \eqref{Huber}, the inequality is strict unless $Z-a \ge c$ or $Z+a \le -c$ with probability 1, meaning that $\P(Z \in [-a-c,a+c]) = 0$; for the absolute-value loss, the inequality is strict unless $\sign(Z-a) = \sign(Z+a)$ with probability 1, meaning that $\P(Z \in [-a,a]) = 0$. In either case, this is only possible if the noise distribution does not put any mass in $[-a,a]$, which would then imply that any point in this entire interval is a median of the noise distribution.

{\em Assume that the loss is Lipschitz.}
In that case, by dominated convergence, $g$ is differentiable with $g'(a) = \E[\loss'(Z+a)]$, and all we have to prove is that $g'(a) \ge 0$ when the noise distribution is unimodal. (Recall that we work with $a > 0$.) The uniqueness would then come from the fact that $g$ is not constant.
We have
\begin{align*}
g'(a)
&= \int_{-\infty}^\infty \loss'(z+a) \phi(z) \d z \\
&= \int_0^\infty \loss'(z) \big\{\phi(z-a) - \phi(z+a)\big\} \d z.
\end{align*}
We claim that the integrand is non-negative. Indeed, $\loss'(z) \ge 0$ for $z \ge 0$ since $\loss$ is non-decreasing away from the origin per \aspref{loss}. Also, using the fact that $\phi$ is non-increasing on $[0,\infty)$, we reason as follows: if $z \ge a$, then $z+a \ge z-a \ge 0$, implying $\phi(z+a) \le \phi(z-a)$; and if $z \le a$, then $a+z \ge a-z \ge 0$, implying $\phi(a+z) \le \phi(a-z)$, in turn implying $\phi(z+a) \le \phi(z-a)$ since $\phi$ is even. Our claim is thus established.
This covers the Tukey loss, for example.

{\em Assume that the loss has a Lipschitz derivative.} 
In that case we can reduce this to the previous case by observing that $g$ above remains differentiable as long as $\phi$ has a finite first moment. Indeed, 
\begin{equation}
\E[|\loss'(Z+a)|] \le \E[|\loss''|_\infty |Z+a|] \le |\loss''|_\infty (\E[|Z|]+a) < \infty,
\end{equation}
for all $a \ge 0$.
\end{proof}

\begin{lem}
\label{lem:poisson}
Suppose that, on the real line, $X_1, \dots, X_n$ are iid from some density $\lambda$, and independently, that $A_1, \dots, A_n$ are iid with distribution $\Psi$. Fix a finite subset $d_1 < \cdots < d_p$ where $\lambda$ can be taken to be continuous. Define $W_{n,j}(t) = \sum_i \{d_j < X_i \le d_j + t/n\} A_i$. Then $W_{n,j}$ converges weakly to the marked Poisson process with intensity $\lambda(d_j)$ on the positive real line and mark distribution $\Psi$. Moreover, $W_{n,1}, \dots, W_{n,p}$ are asymptotically independent.  
\end{lem}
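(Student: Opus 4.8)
The plan is to recast the statement as a convergence of marked point processes and to read the compound (partial-sum) process off from it. For each $j$, introduce the marked point process on $\bbR_+ \times \bbR$
\[
\Xi_{n,j} := \sum_{i=1}^n \IND{X_i > d_j}\, \delta_{(n(X_i - d_j),\, A_i)},
\]
so that $W_{n,j}(t) = \int_{(0,t]\times\bbR} a\, \Xi_{n,j}(\d u, \d a)$, and let $\Xi_j$ be the Poisson process on $\bbR_+ \times \bbR$ with intensity $\mu_j := \lambda(d_j)\, \d u \otimes \Psi(\d a)$, whose associated compound process $W_j(t) := \int_{(0,t]\times\bbR} a\,\Xi_j(\d u,\d a)$ is exactly the marked Poisson process in the statement. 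I would first show $\Xi_{n,j} \Rightarrow \Xi_j$ vaguely and, jointly over $j$, that $(\Xi_{n,1},\dots,\Xi_{n,p})$ converges to a vector of \emph{independent} such Poisson processes, and then transfer this to the $W_{n,j}$.

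For the point-process convergence I would use Laplace functionals. Since the pairs $(X_i, A_i)$ are iid, for any continuous compactly supported $g \ge 0$,
\[
\E\big[e^{-\int g\, \d\Xi_{n,j}}\big] = \Big(1 - \E\big[(1 - e^{-g(n(X_1 - d_j),\, A_1)})\,\IND{X_1 > d_j}\big]\Big)^n.
\]
Using independence of $A_1$ and $X_1$ and the substitution $u = n(x - d_j)$, the inner expectation equals $\tfrac1n \int_0^\infty \int (1 - e^{-g(u,a)})\,\Psi(\d a)\,\lambda(d_j + u/n)\,\d u$, and $n$ times it converges to $\int (1 - e^{-g})\,\d\mu_j$ by continuity of $\lambda$ at $d_j$, the compact support of $g$, and dominated convergence. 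The classical limit $(1 - c_n/n)^n \to e^{-c}$ then gives $\E[e^{-\int g\,\d\Xi_{n,j}}] \to \exp(-\int(1 - e^{-g})\,\d\mu_j)$, the Laplace functional of $\Xi_j$, proving $\Xi_{n,j} \Rightarrow \Xi_j$. For the joint statement I would take $g_1,\dots,g_p \ge 0$ compactly supported in $(0,T]\times\bbR$ and note that, once $T/n < \min_{j\ne j'}|d_j - d_{j'}|$, the windows $(d_j, d_j + T/n]$ are pairwise disjoint, so each observation $X_1$ can activate at most one of the terms $g_j(n(X_1 - d_j), A_1)\IND{X_1 > d_j}$. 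Hence the joint Laplace functional factorizes in the limit into $\prod_j \exp(-\int(1 - e^{-g_j})\,\d\mu_j)$, which is precisely that of a vector of independent Poisson processes; this yields joint convergence and asymptotic independence at once.

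The remaining, and most delicate, step is to pass from vague convergence of $\Xi_{n,j}$ to weak convergence of the compound processes $W_{n,j}$ in the Skorohod topology on $[0,T]$. The obstruction is that the mark-integration functional $\xi \mapsto \int_{(0,\cdot]\times\bbR} a\,\d\xi$ is \emph{not} vaguely continuous, since atoms with large marks escape to infinity in the mark coordinate and are invisible to compactly supported test functions. I would resolve this by truncation: fix $M$ with $\Psi(\{|a| = M\}) = 0$ and replace each $A_i$ by $A_i\IND{|A_i| \le M}$. On the compact region $(0,T]\times[-M,M]$ the integration functional \emph{is} continuous at configurations with finitely many atoms at distinct times, which form a set of full measure under $\Xi_j$, so the continuous mapping theorem gives $W_{n,j}^{(M)} \Rightarrow W_j^{(M)}$, and $W_j^{(M)} \to W_j$ as $M\to\infty$ because the limit has finitely many jumps in $[0,T]$. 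Finally, $W_{n,j}$ and $W_{n,j}^{(M)}$ differ on $[0,T]$ only if some $X_i \in (d_j, d_j + T/n]$ carries a mark with $|A_i| > M$, an event of probability at most $n\,\P(d_j < X_1 \le d_j + T/n)\,\Psi(|A| > M) \to \lambda(d_j)\,T\,\Psi(|A| > M)$, which tends to $0$ as $M\to\infty$ uniformly over large $n$; a standard convergence-together argument then delivers $W_{n,j} \Rightarrow W_j$, jointly independent over $j$. I expect this last truncation bookkeeping, rather than the Laplace-functional computation, to be the main obstacle, precisely because no moment assumption is imposed on $\Psi$, so one must control the \emph{occurrence probability} of a large mark in the shrinking window instead of its expectation.
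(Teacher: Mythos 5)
Your proof is correct, but it follows a genuinely different route from the paper's. The paper first decouples the marks from the counts: it observes that, because the windows $(d_j, d_j + t/n]$ are eventually disjoint, the vector $(W_{n,1},\dots,W_{n,p})$ has the same joint distribution as $\bigl(\sum_{i=1}^{N_{n,j}(t)} A_{j,i}\bigr)_j$, where $N_{n,j}(t) := \#\{i : d_j < X_i \le d_j + t/n\}$ and the $A_{j,i}$ are iid marks independent of everything else. This reduces the lemma to weak convergence and asymptotic independence of the pure counting processes $N_{n,j}$, which the paper establishes through finite-dimensional distributions (via Billingsley's criterion), with binomial-to-Poisson limits for the increments and ``balls-in-bins'' conditioning bounds for the asymptotic independence. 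You instead work directly with the marked empirical point process $\Xi_{n,j}$ on $\bbR_+ \times \bbR$, prove joint vague convergence to independent Poisson processes by a Laplace-functional computation, and then transfer to the compound processes by continuous mapping plus truncation of the marks. The trade-off is instructive: your Laplace-functional step is cleaner and delivers joint convergence and independence in one stroke (the factorization coming from the disjointness of the shrinking windows), but it forces you to confront the fact that mark integration is not vaguely continuous --- hence the truncation argument, which you correctly execute by controlling the occurrence probability $n\,\P(d_j < X_1 \le d_j + T/n)\,\Psi(|A|>M)$ rather than any moment of $\Psi$, exactly because no integrability of the marks is assumed. The paper's decoupling trick makes this entire issue evaporate: once the counting processes converge, the marks ride along for free, with no truncation needed. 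Both arguments are complete; yours is more standard from the point-process-literature viewpoint, the paper's is more elementary.
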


\begin{proof}
It is suffices to establish the statement on intervals. Without loss of generality, we consider the unit interval, so that we only consider $t \in [0,1]$. Everywhere, $n$ is large enough that $d_{j+1} - d_j > 1/n$ for all $j$. This guarantees that, for any $t \in [0,1]$, the intervals $[d_j, d_j+t/n]$ are disjoint.
Let $\{A_{j,i}: i \ge 1, j = 1, \dots, p\}$ be iid from $\Psi$. Then it's easy to see that $W_{n,1}, \dots, W_{n,p}$, jointly, have the same distribution as $\tilde W_{n,1}, \dots, \tilde W_{n,p}$, where 
\begin{equation}
\tilde W_{n,j}(t) := \sum_{i=1}^{N_{n,j}(t)} A_{j,i}, \quad
N_{n,j}(t) := \#\{i : d_j < X_i \le d_j + t/n\}.
\end{equation}
It is thus sufficient to show that $N_{n,j}$ converges weakly to the Poisson process with intensity $\lambda(d_j)$ on the positive real line and that $N_{n,1}, \dots, N_{n,p}$ are asymptotically independent.
By \cite[Th 12.6]{billingsley1999convergence}, it suffices to look at the finite-dimensional distributions.
Therefore, fix $0 \le t_1 < \cdots < t_k \le 1$ and $\{n_{j,s}: j = 1, \dots, p; s = 1, \dots, k\}$ non-negative integers. Based on the `balls-in-bins' dependency structure of these counts, we have
\begin{align}
&\P\big(N_{n,j}(t_s) \le n_{j,s}, \forall j \in [p], s \in [k]\big) \\
&= \P\big(N_{n,1}(t_s) \le n_{1,s}, \forall s \in [k]\big) \\ 
&\quad \times \P\big(N_{n,2}(t_s) \le n_{2,s}, \forall s \in [k]\mid N_{n,1}(t_k) \le n_{1,k}\big) \\
&\quad\quad \times \cdots \\
&\quad\quad\quad \times \P\big(N_{n,p}(t_s) \le n_{p,s}, \forall s \in [k] \mid N_{n,1}(t_k) \le n_{1,k}, \dots, N_{n,p-1}(t_k) \le n_{p-1,k}\big),
\end{align}
with
\begin{align*}
\P\big(N_{n,j}(t_s) \le n_{j,s}, \forall s \in [k] \mid N_{n,1}(t_k) \le n_{1,k}, \dots, N_{n,j-1}(t_k) \le n_{j-1,k}\big) \\
\begin{cases}
\ge \P\big(N_{n,j}(t_s) \le n_{j,s}, \forall s \in [k]\big) \\
\le \P\big(N_{n - r_j,j}(t_s) \le n_{j,s}, \forall s \in [k]\big), \quad r_j := n_{1,k}+\cdots+n_{j-1,k}.
\end{cases}
\end{align*}
Note that $r_1, \dots, r_p$ are all fixed, and it's easy to convince oneself that it suffices at this point to show that, for each $j$, 
\begin{equation}
\P\big(N_{n,j}(t_s) \le n_{j,s}, \forall s \in [k]\big)
\xrightarrow{n \to \infty} \P\big(N_j(t_s) \le n_{j,s}, \forall s \in [k]\big),
\end{equation} 
were $N_j$ is a Poisson process with intensity $\lambda(d_j)$ on the positive real line, in other words, that $N_{n,j}$ converges weakly to $N_j$.
We do so by looking at the probability mass function instead of the cumulative distribution function. With $t_1 < \cdots < t_k$ and $\{n_{j,s}\}$ generic, we have
\begin{align*}
&\P\big(N_{n,j}(t_1) = n_{j,1}, \dots, N_{n,j}(t_k) = n_{j,k}\big) \\
&=\P\big(N_{n,j}(t_1) = n_{j,1}, N_{n,j}(t_2) - N_{n,j}(t_1) = n_{j,2}-n_{j,1}, \dots, N_{n,j}(t_k) - N_{n,j}(t_{k-1}) = n_{j,k}-n_{j,k-1}\big) \\
&\sim \P\big(N_{n,j}(t_1) = n_{j,1}\big) 
\times \P\big(N_{n,j}(t_2) - N_{n,j}(t_1) = n_{j,2}-n_{j,1}\big) \\
&\quad\quad\quad\quad \times \cdots \times \P\big(N_{n,j}(t_k) - N_{n,j}(t_{k-1}) = n_{j,k}-n_{j,k-1}\big), \quad \text{as } n\to\infty,
\end{align*}
where the approximation follows the same arguments used above to prove the asymptotic independence of the various count processes. We then conclude with the fact that, for any $m\ge0$ integer,
\begin{align*}
\P\big(N_{n,j}(t_s) - N_{n,j}(t_{s-1}) = m\big)
&= \#\{i : d_j+t_{s-1}/n < X_i \le d_j+t_s/n\} \\
&\xrightarrow{n\to\infty} \lambda(d_j) (t_s-t_{s-1})
= \P\big(N_j(t_s) - N_j(t_{s-1}) = m\big),
\end{align*}
using the fact that $\lambda$ is continuous at $d_j$.
\end{proof}

\begin{lem}
\label{lem:compact}
Consider the setting of \secref{flexible_non-smooth}.
Assume in addition that $\E[\loss(Z)] < \loss(\infty)$ and that $\E[\loss(Z)] < \E[\loss(Z+a)]$ for all $a \ne 0$.
Then there is a compact subset $\Theta_0 \subset \Theta$ of the form $[-B,B] \times [-B,B] \times [1/B, B]$ for some $B \ge 1$, such that, with probability tending to 1, any minimizer $\hat\theta_n$ of $\widehat M_n$ is in $\Theta_0$.
\end{lem}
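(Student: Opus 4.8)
The plan is to show that $\widehat{M}_n$ is, with probability tending to one, uniformly larger on the complement of a suitable box $\Theta_0$ than at $\theta^*$. Since $\widehat{M}_n(\hat\theta_n)\le \widehat{M}_n(\theta^*) = \tfrac1n\sum_i \loss(Z_i)\to \E[\loss(Z)]$ by the law of large numbers, this forces $\hat\theta_n\in\Theta_0$. Throughout write $S$ for the compact support of $\lambda$, $K$ for the compact support of $f$, and $g(a):=\E[\loss(Z+a)]$; as in the proof of \lemref{consistent}, $g$ is even and finite (by \eqref{Z0}), the hypothesis gives $g(a)>g(0)$ for $a\ne0$, and $g(a)\to\loss(\infty)$ as $|a|\to\infty$. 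Conditioning on $X$ and using independence yields $M(\theta)=\E[g(f_{\theta^*}(X)-f_\theta(X))]\ge g(0)=\E[\loss(Z)]$, with equality only if $f_\theta=f_{\theta^*}$ $\lambda$-a.e.

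\textbf{Population coercivity.} First I would produce $\epsilon_0>0$ and $B\ge1$ with $M(\theta)\ge\E[\loss(Z)]+3\epsilon_0$ for all $\theta=(\beta,\xi,\nu)\notin\Theta_0:=[-B,B]\times[-B,B]\times[1/B,B]$, by ruling out that $M$ can approach $\E[\loss(Z)]$ along any escape. (i) If $|\xi|\to\infty$ with $\beta,\nu$ moderate, then $\xi+\nu K$ leaves $S$, so $f_\theta\equiv0$ on $S$ and $M(\theta)\to\E[\loss(Z+f_{\theta^*}(X))]>\E[\loss(Z)]$, because $f_{\theta^*}$ is non-constant, hence nonzero on a set of positive $\lambda$-measure, and $g(a)>g(0)$ there. (ii) If $\nu\to0$, then $\lambda(\xi+\nu K)=O(\nu)\to0$, so $M(\theta)\ge\E[g(f_{\theta^*}(X))]-o(1)$ uniformly. (iii) If $\nu\to\infty$ with $\beta$ bounded, $(x-\xi)/\nu$ tends to a constant on the bounded set $S$, so $f_\theta$ tends to a constant $c$ and $M(\theta)\to\E[g(f_{\theta^*}(X)-c)]>\E[\loss(Z)]$. (iv) If $|\beta|\to\infty$ while the template genuinely overlaps $S$, the residual blows up on that overlap, driving $g\to\loss(\infty)$ there, so $M(\theta)\ge\E[\loss(Z)]+(\text{overlap mass})(\loss(\infty)-\E[\loss(Z)])$, positive by $\E[\loss(Z)]<\loss(\infty)$. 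The cleanest formulation is a compactness argument: along any sequence leaving all compacts, pass to a subsequence along which $f_\theta|_S$ either blows up in sup-norm or converges in measure (using that $f$ is bounded and a.e.\ continuous) to $0$ or to a constant, and check $\liminf M>\E[\loss(Z)]$ in each case; the interior-support hypothesis on $\lambda$ guarantees no degenerate limit equals $f_{\theta^*}$.

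\textbf{Empirical transfer.} Transferring this to $\widehat{M}_n$ uniformly over the non-compact set $\Theta\setminus\Theta_0$ is the main obstacle, since plain Glivenko--Cantelli does not apply. I would split by amplitude. On $\{|\beta|\le B_1\}$ all $m_\theta$ are dominated by the integrable envelope $\loss(|Y|+B_1|f|_\infty)$; I would compactify the location and scale directions (e.g.\ by $\arctan$ reparametrisations), adjoining boundary points for the degenerate limits found above. Using that $f$ is bounded and a.e.\ continuous, $\theta\mapsto m_\theta$ extends continuously into $L^1(p_{\theta^*})$ up to the boundary, the extended index set is compact, and a uniform law of large numbers gives $\sup|\widehat{M}_n-M|\to0$; with the previous step this yields $\inf\widehat{M}_n\ge\E[\loss(Z)]+2\epsilon_0$ there w.h.p. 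On $\{|\beta|>B_1\}$ I would argue by a dichotomy on the overlap mass $\lambda(\xi+\nu K)$: where it is small, the crude bound $\widehat{M}_n(\theta)\ge\tfrac1n\sum_{i:X_i\notin\xi+\nu K}\loss(Y_i)$ (dropped terms being nonnegative), combined with uniform integrability of the envelope and a VC Glivenko--Cantelli bound for the interval class $\{\xi+\nu K\}$, keeps $\widehat{M}_n$ near $\E[\loss(Z+f_{\theta^*}(X))]$; where the mass is bounded below, large $|\beta|$ blows up the residuals on a positive-measure core $\{|f|\ge\rho\}$, so $\loss$ there is close to $\loss(\infty)$, and the same VC bound shows a bounded-below empirical fraction of points incurs this cost. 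Either way $\widehat{M}_n(\theta)\ge\E[\loss(Z)]+2\epsilon_0$ on this part w.h.p.

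\textbf{Conclusion.} Combining the two regions gives $\inf_{\theta\notin\Theta_0}\widehat{M}_n(\theta)\ge\E[\loss(Z)]+2\epsilon_0$ with probability tending to one, while $\widehat{M}_n(\theta^*)\le\E[\loss(Z)]+\epsilon_0$ eventually, so every minimiser lies in $\Theta_0$. The delicate points, where the interior-support assumption on $\lambda$ and the two quantitative hypotheses ($\E[\loss(Z)]<\loss(\infty)$ and strict minimality of $g$ at $0$) do the real work, are the a.e.-continuity needed for the $L^1$-continuous extension when $\nu\to\infty$ and $f$ has jumps, and making the blow-up lower bound in the large-amplitude region uniform.
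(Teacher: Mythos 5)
Your strategy -- population coercivity over the whole noncompact parameter set followed by a uniform empirical transfer -- is genuinely different from the paper's, and the difference is where your argument breaks. The paper never fights the noncompactness of $(\xi,\nu)$ head on: it invokes the standing assumption of \secref{flexible_non-smooth} that the support of $\lambda$ contains the support of $f_{\theta^*}$ in its interior to restrict attention to those $\theta$ for which $f_\theta$ has support inside the support of $\lambda$, and this restriction is \emph{deterministic}, immediately giving upper bounds on $|\xi|$ and on $\nu$. After that only two things need probabilistic arguments: a lower bound $\nu \ge \nu_0$ (a Glivenko--Cantelli argument for the class $\{\loss(z+f(x))\IND{x\notin I} : I \text{ a short interval}\}$) and an upper bound $|\beta|\le \beta_0$ (a VC argument over the sets $\xi+\nu S_\eps$, $S_\eps := \{|f|>\eps\}$, where the two hypotheses $\E[\loss(Z)]<\loss(\infty)$ and $\E[\loss(Z)]<\E[\loss(Z+a)]$ enter). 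You use the interior-support assumption only to rule out degenerate limits equal to $f_{\theta^*}$, so you are left handling regimes that the paper's restriction simply removes -- and those regimes are exactly where your proof has a genuine gap.

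Concretely, both your trichotomy of subsequential limits (blow-up in sup-norm, $0$, or a constant) and your dichotomy on $\{|\beta|>B_1\}$ fail when $\beta$ and $\nu$ diverge \emph{together} with the window anchored at a zero of $f$. Take $f$ the tent function $f(u)=(1-|u|)_+$ and set $\nu=\beta\to\infty$, $\xi=-\nu$; then
\begin{equation*}
f_\theta(x) \;=\; \beta f\big(1+x/\nu\big) \;=\; (-x)_+ \quad \text{on } \{x \ge -\nu\},
\end{equation*}
so $f_\theta|_S$ converges to the bounded, nonzero affine function $(-x)_+$: the overlap mass $\lambda(\xi+\nu K)$ stays bounded below (it contains all of $S\cap(-\infty,0]$), $|\beta|$ is arbitrarily large, and yet no residual blows up. Hence the claim that ``where the mass is bounded below, large $|\beta|$ blows up the residuals on a positive-measure core $\{|f|\ge\rho\}$'' is false, and the coercivity case analysis misses these affine limits (as well as step-function limits, produced when $\nu\to\infty$ drags a jump of $f$ across $S$). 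For the same reason the arctan compactification cannot give an $L^1(p_{\theta^*})$-continuous extension of $\theta\mapsto m_\theta$: sequences with $\nu_k\to\infty$ and $\xi_k=-\nu_k x_0$ for different anchors $x_0$ all converge to the same boundary corner of the arctan square, yet their limits $f(x_0^{\pm})$ differ, so $m_\theta$ has no limit at that boundary point. These regimes are not fatal in principle -- the population value still exceeds $\E[\loss(Z)]$, and one could repair the transfer, e.g.\ using that $\{f_\theta|_S : |\beta|\le B_1\}$ is bounded in total variation and hence precompact in $L^1$ -- but as written your uniform law of large numbers does not cover them, whereas the paper's support restriction makes them vanish from the problem at the outset.
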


\begin{proof}
Let $a = \inf\{x : f(x) > 0\}$ and $b = \sup\{x: f(x) > 0\}$, and assume that $\lambda$ is supported on $[-c, c]$. 
Since we know that $f_{\theta^*}$ has support contained within the support of $\lambda$, we can restrict attention to those $\theta$ such that $f_\theta$ has support contained within the support of $\lambda$, which is equivalent to $\xi$ and $\nu$ satisfying $\xi + a \nu \ge -c$ and $\xi + b \nu \le c$. Note that this implies that $-c |b+a|/(b-a) \le \xi \le c |b+a|/(b-a)$ and $\nu \le 2c/(b-a)$.

We now show that there is $\nu_0 > 0$ such that, if $\nu \le \nu_0$, then $\widehat M_n(\beta, \xi, \nu)$ is, asymptotically, bounded away (and above) from $\widehat M_n(\theta^*)$, regardless of $\beta$ and $\xi$.
Henceforth, we assume without loss of generality that $\theta^* = (1,0,1)$.

On the one hand, by the law of large numbers, in probability as $n \to \infty$, 
\begin{equation}
\widehat M_n(\theta^*) \to M(\theta^*) = \E[\loss(Z)].
\end{equation}
On the other hand,
\begin{align}
\inf_{\beta, \xi, \nu \le \nu_0} \widehat M_n(\beta, \xi, \nu)
&= \inf_{\beta, \xi, \nu \le \nu_0} \frac1n \sum_i \loss\big(Z_i + f(X_i) - \beta f((X_i-\xi)/\nu)\big) \\
&\ge \inf_{\beta, \xi, \nu \le \nu_0} \frac1n \sum_i \loss\big(Z_i + f(X_i)) \IND{X_i \notin \xi \pm \nu K} \\
&= \inf_{|I| \le 2 \nu_0 K} \frac1n \sum_i \loss\big(Z_i + f(X_i)) \IND{X_i \notin I},
\end{align}
where $I$ above is a closed interval of length $|I|$.
The inequality relies on the fact that $\beta f((x-\xi)/\nu) = 0$ when $x \notin \xi \pm \nu K$ and the fact that the loss function is non-negative.
Now,
\begin{align}
\frac1n \sum_i \loss\big(Z_i + f(X_i)) \IND{X_i \notin I}
= \frac1n \sum_i g_I(X_i, Z_i), \quad
g_I(x,z) := \loss\big(z + f(x)) \IND{x \notin I},
\end{align}
and the class of functions $\{g_I : |I| \le 2 \nu_0 K\}$ is clearly Glivenko--Cantelli. Hence,
\begin{equation}
\inf_{|I| \le 2 \nu_0 K} \frac1n \sum_i \loss\big(Z_i + f(X_i)) \IND{X_i \notin I}
\xrightarrow{n \to \infty} \inf_{|I| \le 2 \nu_0 K} \E[\loss\big(Z + f(X)) \IND{X \notin I}].
\end{equation}
In turn, by dominated convergence, as $\nu_0 \to 0$, the last expression converges to $\E[\loss(Z + f(X))]$, which is strictly larger than $\E[\loss(Z)]$ by (the equivalent of) \aspref{M}. In particular, if $\nu_0$ is so small that the last infimum is $> \E[\loss(Z)]$, we guarantee that, with probability tending to 1, $\hat\nu_n \ge \nu_0$.

So far, we showed that we can restrict $(\xi,\nu)$ in order for $f_\theta$ to have support inside that of $\lambda$, and that we may add a lower bound on $\nu$ (away from $0$). It remains to show that, under these conditions, we may also add a bound on $|\beta|$. 
Fix $\eps > 0$ and define $S_\eps = \{x: |f(x)| > \eps\}$.
Then, with the infimum over $\xi$ and $\nu$ restricted as described above, for $\beta_0 > 0$ and $z_0 > 0$, we have
\begin{align*}
&\inf_{|\beta| \ge \beta_0, \xi, \nu} \widehat M_n(\beta, \xi, \nu) \\
&\ge \inf_{|\beta| \ge \beta_0, \xi, \nu}\quad \frac1n \sum_i \inf_{|a_i| \ge \eps} \loss\big(Z_i + f(X_i) - \beta a_i\big) \IND{X_i \in \xi + \nu S_\eps} + \frac1n \sum_i \loss\big(Z_i + f(X_i)\big) \IND{X_i \notin \xi + \nu S_0} \\
&\ge \inf_{\xi, \nu}\quad \frac1n \sum_i \loss((\beta_0\eps -Z_i -f(X_i))_+) \IND{X_i \in \xi + \nu S_\eps} + \frac1n \sum_i \loss\big(Z_i + f(X_i)\big) \IND{X_i \notin \xi + \nu S_0} \\
&\xrightarrow{n \to \infty} \inf_{\xi, \nu}\quad \E[\loss((\beta_0\eps -Z -f(X))_+) \IND{X \in \xi+\nu S_\eps}] + \E[\loss(Z+f(X)) \IND{X \notin \xi+\nu S_0}] \\
&\xrightarrow{\beta_0 \to \infty} \inf_{\xi, \nu}\quad  \loss(\infty) \P(X \in \xi+\nu S_\eps) + \E[\loss(Z+f(X)) \IND{X \notin \xi+\nu S_0}] \\
&\xrightarrow{\eps \to 0} \inf_{\xi, \nu}\quad \loss(\infty) \P(X \in \xi+\nu S_0) + \E[\loss(Z+f(X)) \IND{X \notin \xi+\nu S_0}]. 
\end{align*}
The first limit uses a uniform law of large numbers as we did above, only here it is based on the fact that the collection of sets of the form $\xi+\nu S_\eps$, as $\xi$ and $\nu$ vary as they do here, has finite VC dimension, and similarly for the collection of sets of the form $\xi+\nu S_0$. We compare the last expression with $M(\theta^*)$ to get
\begin{align}
&\loss(\infty) \P(X \in \xi+\nu S_0) + \E[\loss(Z+f(X)) \IND{X \notin \xi+\nu S_0}] - M(\theta^*) \\
&= (\loss(\infty)-\E[\loss(Z)]) \P(X \in \xi+\nu S_0)  + \E[\varphi(f(X)) \IND{X \notin \xi+\nu S_0}] =: Q(\xi, \nu),
\end{align}
where $\varphi(a) := \E[\loss(Z+a)]-\E[\loss(Z)]$.
By assumption, $\loss(\infty)-\E[\loss(Z)] > 0$ and $\varphi(a) > 0$ when $a \ne 0$, so that $Q(\xi, \nu) = 0$ if and only if $\P(X \in \xi+\nu S_0) = 0$ and $\P(X \notin \xi+\nu S_0, X \in S_0) = 0$, which together imply that $\P(X \in S_0) = 0$, which would contradict the fact that $f \nequiv 0$ on the support of $\lambda$. Hence, $Q(\xi,\nu) > 0$, and since $Q$ is continuous by dominated convergence, and $(\xi,\nu)$ is in a compact set, we have that $\inf_{\xi, \nu} Q(\xi,\nu) > 0$.
All in all, we are able to conclude that there is $\beta_0 > 0$ such that, with probability tending to 1, $|\hat\beta_n| \le \beta_0$. And this is the only thing that was left to prove to establish the lemma.
\end{proof}

\end{document}